\newtheorem{theorem}{Theorem}[section]
\newtheorem{lemma}[theorem]{Lemma}
\newtheorem{proposition}[theorem]{Proposition}
\newtheorem{remark}[theorem]{Remark}
\newtheorem{claim}[theorem]{Claim}
\theoremstyle{definition}
\newtheorem{definition}[theorem]{Definition}
\theoremstyle{remark}
\newtheorem*{note*}{Note}
\numberwithin{equation}{section}
\newcommand{\rank}{\mathop{\operator@font rank}}
\newcommand{\conv}{\mathop{\operator@font conv}}
\newcommand{\vol}{{\rm vol}}
\newcommand{\vrad}{{\rm vrad}}
\newcommand{\onetagright}{\tagsleft@false}
\newcommand{\ls}{\leqslant}
\newcommand{\gr}{\geqslant}
\renewcommand{\epsilon}{\varepsilon}
\def\irr#1{{\rm Irr}(#1)}
\def\irrr#1#2 {\irr {#1 \mid #2}}
\begin{document}
\small

\title{\bf On the deterministic interior body of random polytopes}

\medskip

\author{Minas Pafis and Natalia Tziotziou}

\date{}
\maketitle

\begin{abstract}\footnotesize Let $\{X_i\}_{i=1}^{\infty}$ be a sequence of independent copies of a random
vector $X$ in $\mathbb{R}^n$.  We revisit the question to determine the asymptotic shape of the
random polytope $K_N={\rm conv}\{X_1,\ldots ,X_N\}$ where $N>n$. We show that for any $\beta\in (0,1)$
there exists a constant $c(\beta)>0$ such that the following holds true: If $\mu $ is a Borel probability
measure on ${\mathbb R}^n$ then, for all $N\gr c(\beta)n$ we have that $K_N\supseteq T_{\beta\ln(\frac{N}{n})}(\mu)$
with probability greater than $1-\exp(-\tfrac{1}{2}N^{1-\beta}n^{\beta})$, where $T_p(\mu)$ is the convex set
of all points $x\in\mathbb{R}^n$ with half-space depth greater than or equal to $e^{-p}$. Our approach does not
require any additional assumptions about the measure $\mu$ and hence it generalizes and/or improves a sequence of
previous results. Moreover, for the class of strongly regular measures we compare the family $\{T_p(\mu)\}_{p>0}$
to other natural families of convex bodies associated with $\mu$, such as the $L_p$-centroid bodies of $\mu$
or the level sets of the Cram\'{e}r transform of $\mu$, and use this information in order to estimate the size
of a random $K_N$.
\end{abstract}

%%%%%%%%%%%%%%%%%%%%%%%%%%%%%%%%%%%%%%%%%%%%%%%%%%%%%%%%%%%%%%%%%%%%%%%%%%%%%%%%%%%%%%%%%%%%%%%%%%%%%%%%%%%%%%%%%%%%%%%%%%%%%%%
\section{Introduction}\label{section-1}
%%%%%%%%%%%%%%%%%%%%%%%%%%%%%%%%%%%%%%%%%%%%%%%%%%%%%%%%%%%%%%%%%%%%%%%%%%%%%%%%%%%%%%%%%%%%%%%%%%%%%%%%%%%%%%%%%%%%%%%%%%%%%%%

Let $\mu$ be a Borel probability measure on $\mathbb{R}^n$ and consider a sequence
$\{X_i\}_{i=1}^{\infty}$ of independent
random vectors distributed according to $\mu$. The geometry of the random polytopes $K_N={\rm conv}\{X_1,\ldots ,X_N\}$
and $S_N={\rm conv}\{\pm X_1,\ldots ,\pm X_N\}$ has been extensively studied in a series of works
by several authors. The following typical result of
Gluskin \cite{Gluskin-1988} concerns the Gaussian case, where $\mu=\gamma_n$ is the standard Gaussian measure on
$\mathbb{R}^n$ with density $(2\pi)^{-n/2}\exp(-|x|^2/2)$ (in what follows, $|\cdot|$ denotes the
Euclidean norm): For any $\beta\in (0,1)$ and $N\gr c_1(\beta)n$ one has that
\begin{equation}\label{eq:gluskin}S_N\supseteq c_2(\beta)\sqrt{\ln (eN/n)}\,B_2^n\end{equation}
with probability greater than $1-2\exp(-c_3N^{1-\beta}n^{\beta})$, where $c_i(\beta)$ are constants that depend only on $\beta$
and $c_3$ is an absolute positive constant.

This result has been extended in \cite{Litvak-Pajor-Rudelson-Tomczak-2005} to random polytopes generated by a random vector $X=(\xi_1,\ldots ,\xi_n)$ whose
coordinates are independent copies of a random variable $\xi$ with expectation $\mathbb{E}(\xi)=0$ and variance
${\rm Var}(\xi)=1$, that satisfies $\left(\mathbb{E}|\xi|^p\right)^{1/p}\ls L\sqrt{p}$ for some constant $L>0$
and every $p\gr 1$ (we then say that $\xi$ is $L$-subgaussian). For any $\beta\in (0,1)$ and $N\gr c_1(\beta,L)n$ one has that
\begin{equation}\label{eq:rudelson-et-al}S_N\supseteq c_2(\beta,L)\Big(B_{\infty}^n\cap \sqrt{\ln (eN/n)}\,B_2^n\Big)\end{equation}
with probability greater than $1-2\exp(-c_3N^{1-\beta}n^{\beta})$. A version of this fact for the special case where
the $\xi_i$'s are symmetric $\pm 1$ random variables had been previously obtained in \cite{Giannopoulos-Hartzoulaki-2002} for $N\gr n(\ln n)^2$ and with probability greater than $1-e^{-n}$.

A general study of the asymptotic shape of random polytopes when their vertices are assumed to have a log-concave
distribution was initiated by Dafnis, Giannopoulos and Tsolomitis in \cite{DGT1} and \cite{DGT2}.
Given a centered log-concave probability measure $\mu $ on
${\mathbb R}^n$, for every $N\gr n$ we consider $N$ independent random vectors $X_1,\ldots ,X_N$ distributed according to
$\mu $ and the random polytope $S_N:={\rm conv}\{ \pm X_1,\ldots ,\pm X_N\}$.
The main idea in these works was to compare $S_N$ with the $L_p$-centroid body of $\mu $
for a suitable value of $p$; roughly speaking, $S_N$ is close to the body $Z_{\ln (eN/n)}(\mu )$
with high probability.  Recall that the $L_p$-centroid bodies $Z_p(\mu)$, $p\gr 1$, are defined
through their support function $h_{Z_p(\mu)}$ given by
\begin{equation}
h_{Z_p(\mu)}(y):= \|\langle \cdot ,y\rangle \|_{L_p(\mu)} = \left(\int_{{\mathbb R}^n}|\langle x,y\rangle|^pd\mu(x)\right)^{1/p}.
\end{equation}
These bodies incorporate information about the distribution of linear functionals
with respect to $\mu$. The $L_p$-centroid bodies were introduced, under a different normalization,
by Lutwak and Zhang in \cite{Lutwak-Zhang-1997}, while in \cite{Paouris-2006} for the first time,
and in \cite{Paouris-2012} later on, Paouris used geometric properties
of them to acquire detailed information about the distribution of the Euclidean norm with
respect to $\mu $. The starting observation in \cite{DGT1} was that the deterministic body
$\sqrt{p}\,B_2^n$ that appears in \eqref{eq:gluskin} is approximately equal to $Z_p(\gamma_n)$ and
the deterministic body $B_{\infty}^n\cap \sqrt{p}\,B_2^n$ that appears in \eqref{eq:rudelson-et-al} is approximately
equal to $Z_p(\nu_n)$, where $\nu_n$ is the uniform measure on the cube.
It was proved in \cite{DGT1} that, given any centered log-concave probability measure $\mu $
on ${\mathbb R}^n$ and any $cn\ls N\ls e^n$, the random polytope $S_N$ defined by $N$ independent random vectors
$X_1,\ldots ,X_N$ which are distributed according to $\mu $ satisfies the inclusion
\begin{equation}\label{eq:DGT}S_N\supseteq c_1Z_{\ln (eN/n)}(\mu )\end{equation}
with probability greater than
\begin{equation*}1-2\exp\left (-c_3N^{1-\beta }n^{\beta }\right
)-{\mathbb P}(\|\Gamma :\ell_2^n\to\ell_2^N\|\gr \gamma\sqrt{N}) \gr
1-\exp (-c_0\gamma\sqrt{N}),\end{equation*} where
$\Gamma:\ell_2^n\to\ell_2^N$ is the random operator
$\Gamma(y)=(\langle X_1, y\rangle, \ldots \langle X_N, y\rangle)$. The last inequality follows from the estimate
\begin{equation*}{\mathbb P}(\|\Gamma :\ell_2^n\to\ell_2^N\|\gr \gamma
\sqrt{N})\ls\exp (-c\gamma\sqrt{N})\end{equation*} for all $N\gr
\gamma n$, that has been obtained in \cite{ALPT}.

This approach was further extended to random polytopes with vertices that have an arbitrary symmetric
distribution $\mu$ on $\mathbb{R}^n$. Gu\'{e}don, Krahmer, K\"{u}mmerle, Mendelson and Rauhut introduced
in \cite{GKKMR-2022} (see also \cite{Mendelson-2020} and the earlier work \cite{GLT-2020} which was
the motivation for \cite{Mendelson-2020} and \cite{GKKMR-2022}) the family of sets $\{U_p(\mu)\}_{p\gr 1}$ defined by
\begin{equation}\label{eq:def-U}U_p(\mu)=\{y\in \mathbb{R}^n:\;\mu(\{x\in\mathbb{R}^n:\langle x,y\rangle\gr 1\})
\ls \exp(-p)\}\end{equation}
and showed that, under some assumptions on $\mu$, the random polytopes $S_N$ and $K_N$ contain
$\frac{1}{2}(U_p(\mu))^{\circ}$ with probability close to $1$, where $A^{\circ}$ denotes the polar set of $A$.
Their assumption on $\mu$ is that there exists a norm $\|\cdot\|$ on $\mathbb{R}^n$ and some positive constants
$\gamma,\delta,r$ and $L$ such that the {\it small ball condition}
$\mu(\{x\in\mathbb{R}^n:|\langle x,y\rangle |\gr\gamma\|y\|\})\gr\delta $ and the {\it $L_r$-condition}
$\big(\mathbb{E}_{\mu}|\langle \cdot ,y\rangle |^r\big)^{1/r}\ls L\|y\|$ are satisfied for every $y\in\mathbb{R}^n$.
The main result in \cite{GKKMR-2022} asserts that if $\mu$ is a symmetric Borel probability measure on $\mathbb{R}^n$
that satisfies a small ball condition and an $L_r$-condition with constants $\gamma,\delta, r$ and $L$ for some
norm on $\mathbb{R}^n$ then, for any $0<\beta<1$ there exists a positive constant $c_0:=c(\beta,\delta,r,L/\gamma)$ such
that if $N\gr c_0n$ and $p=\beta\ln(eN/n)$ we have
\begin{equation}\label{eq:GKKMR}S_N\supseteq\frac{1}{2}(U_p(\mu))^{\circ}\end{equation}
with probability greater than $1-2\exp(-c_1N^{1-\beta}n^{\beta})$, where $c_1>0$ is an absolute constant.
It is explained in \cite{GKKMR-2022} that the same inclusion holds for the random polytope $K_N$
and that it implies \eqref{eq:DGT}, with the improved and optimal probability estimate stated above,
when $\mu$ is a centered log-concave probability measure (we shall discuss and further study the relation
between $U_p(\mu)$ and $Z_p(\mu)$ in Sections~\ref{section-3} and~\ref{section-4}).

Let now $\mu$ be an arbitrary Borel probability measure on $\mathbb{R}^n$. For any $x\in {\mathbb R}^n$ we denote by ${\cal H}(x)$ the set
of all closed half-spaces $H^+$ of ${\mathbb R}^n$ containing $x$. The function
$$\varphi_{\mu }(x)=\inf\{\mu (H^+):H^+\in {\cal H}(x)\}$$
(introduced by Tukey in \cite{Tukey-1975}) is the {\it Tukey half-space depth function} of $\mu$.
Note that the infimum in the definition of $\varphi_{\mu}(x)$
is determined by those closed half-spaces $H^+$ for which $x$ lies on the boundary ${\rm bd}(H^+)$ of $H^+$.
It is useful to note that the half-space depth function $\varphi_{\mu}$ attains its maximum
and $\max(\varphi_{\mu})\gr\frac{1}{n+1}$. Every point $x$ that satisfies $\varphi_{\mu}(x)=\max(\varphi_{\mu})$ is called a
center point for $\mu$.

Hayakawa, Lyons and Oberhauser showed in \cite{HLO-2023} that, in the case where $\mu$ is assumed symmetric,
an inclusion which is essentially equivalent to \eqref{eq:GKKMR}
continues to hold even if we do not assume the small ball condition and the $L_r$-condition for $\mu$. They considered
the family $\{T_p(\mu)\}_{p>0}$ of the level sets of the Tukey half-space depth $\varphi_{\mu}$ of $\mu$
which are defined by
$$T_p(\mu)=\{x\in\mathbb{R}^n:\varphi_{\mu}(x)\gr e^{-p}\}$$
and proved the following.

\begin{theorem}[Hayakawa-Lyons-Oberhauser]\label{th:HLO}Let $\mu$ be a symmetric Borel probability measure
on $\mathbb{R}^n$. Let $0<\beta<1$ and set $p=\beta\ln(eN/n)$. Then, if $N\gr (12e^{\beta})^{\frac{1}{1-\beta}}n$
we have that \begin{equation}\label{eq:HLO}K_N\supseteq \frac{1}{2}T_p(\mu)\end{equation}
with probability greater than $1-2\exp(-ce^{-\beta}N^{1-\beta}n^{\beta})$, where $c>0$ is an absolute constant.
\end{theorem}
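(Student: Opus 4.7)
Write $s_\theta:=h_{T_p(\mu)}(\theta)$. By duality, $K_N\supseteq \tfrac12 T_p(\mu)$ is equivalent to the pointwise support-function bound $\max_{i\ls N}\langle X_i,\theta\rangle \gr \tfrac12 s_\theta$ for every $\theta\in\sfe$. The starting point is a pointwise probability estimate. Taking a sequence $v_k\in T_p(\mu)$ with $\langle v_k,\theta\rangle\to s_\theta$, the closed half-space $\{x:\langle x,\theta\rangle\gr \langle v_k,\theta\rangle\}$ contains $v_k$ and so has $\mu$-mass at least $\varphi_\mu(v_k)\gr e^{-p}$; passing to the limit and using independence of the $X_i$'s,
\[ \P\bigl(\max_{i\ls N}\langle X_i,\theta\rangle<s_\theta\bigr)\ls (1-e^{-p})^N\ls \exp(-Ne^{-p}), \]
which with $p=\beta\ln(eN/n)$ equals $\exp(-e^{-\beta}N^{1-\beta}n^\beta)$, exactly the target exponent.

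I would then discretise. Choose a $\delta$-net $\mathcal{N}$ of the unit sphere of the norm $h_{T_p(\mu)}$ (equivalently, of $\partial T_p(\mu)^\circ$ in the $T_p(\mu)^\circ$-norm), of cardinality at most $(3/\delta)^n$, and apply the union bound:
\[ \P\bigl(\exists\theta\in\mathcal{N}:\max_{i\ls N}\langle X_i,\theta\rangle<s_\theta\bigr)\ls (3/\delta)^n\exp(-Ne^{-p}). \]
The hypothesis $N\gr (12e^\beta)^{1/(1-\beta)}n$ is equivalent to $Ne^{-p}\gr 12n$, so with $\delta$ a suitable absolute constant the factor $(3/\delta)^n$ is dominated by $\exp((1-c)Ne^{-p})$ for a small absolute $c>0$, and the right-hand side is at most $2\exp(-ce^{-\beta}N^{1-\beta}n^\beta)$. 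On the complement of this bad event, for every $\theta\in\mathcal{N}$ there is an index $i(\theta)$ with $\langle X_{i(\theta)},\theta\rangle\gr s_\theta$, so $h_{K_N}(\theta)\gr s_\theta$ for every $\theta\in\mathcal{N}$.

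The final and most delicate step is to upgrade this net-level information to the uniform inclusion $\tfrac12 T_p(\mu)\subseteq K_N$, paying only the factor $\tfrac12$. The symmetry of $\mu$ is essential here: it forces $T_p(\mu)=-T_p(\mu)$, so $h_{T_p(\mu)}$ is a genuine norm. Combined with a polytopal approximation of $T_p(\mu)$ (the standard inscribing estimate for symmetric bodies: the convex hull of unit-norm support vectors of a symmetric $B$ at directions of a $\delta$-net of $\partial B^\circ$ in the $B^\circ$-norm contains $(1-\delta)B$) applied with $\delta=\tfrac12$, and the witnesses $X_{i(\theta)}$ that sit inside $K_N$, this closes the gap. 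The main obstacle is precisely this passage from the net to all of $\sfe$: without moment assumptions on $\mu$, $\max_{i\ls N}|X_i|$ is uncontrolled, so no Lipschitz comparison of $h_{K_N}$ at nearby directions is available in the Euclidean metric. The net must therefore be organised with respect to the $\mu$-adapted geometry of $T_p(\mu)$, and the factor $\tfrac12$ in the conclusion is exactly the slack that makes an absolute-constant $\delta$ admissible, which in turn dictates the numerical constant $12e^\beta$ in the hypothesis on $N$.
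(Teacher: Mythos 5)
Your first two steps are sound: the directional bound $\mathbb{P}\big(\max_{i\ls N}\langle X_i,\theta\rangle<h_{T_p(\mu)}(\theta)\big)\ls(1-e^{-p})^N\ls\exp(-e^{-\beta}N^{1-\beta}n^{\beta})$ is correct, and so is the union bound over a net of directions. The gap is in the final step, which you rightly flag as the delicate one but do not actually close. The inscribing lemma you invoke applies to support vectors $z_{\theta}\in\partial T_p(\mu)$: because $z_{\theta}$ lies in $T_p(\mu)$, its Minkowski gauge satisfies $\|z_{\theta}\|_{T_p(\mu)}\ls 1$, so $|\langle z_{\theta},u-\theta\rangle|\ls\delta$ for nearby directions and the separation argument closes. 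Your witnesses $X_{i(\theta)}$ satisfy only $\langle X_{i(\theta)},\theta\rangle\gr h_{T_p(\mu)}(\theta)$; they need not lie anywhere near $T_p(\mu)$, and without moment assumptions on $\mu$ nothing controls $\|X_{i(\theta)}\|_{T_p(\mu)}$. For a direction $u$ with $h_{T_p(\mu)}(u)=1$ and $h_{T_p(\mu)}(u-\theta)\ls\delta$ one only gets $\langle X_{i(\theta)},u\rangle\gr 1-\delta\|X_{i(\theta)}\|_{T_p(\mu)}$, which is vacuous once $\|X_{i(\theta)}\|_{T_p(\mu)}>1/(2\delta)$. Already for $T_p(\mu)=B_2^2$ in the plane one can place, for every $\theta$ in the net, a witness on the supporting line $\{\langle\cdot,\theta\rangle=1\}$ so that all witnesses lie in $\{x:\langle x,e_1\rangle<\tfrac12\}$; then every net-level inequality holds yet ${\rm conv}\{X_{i(\theta)}\}$ misses $\tfrac12 e_1\in\tfrac12 T_p(\mu)$. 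So the deterministic implication you need (one arbitrary witness per net direction implies the inclusion) is false, and the defect cannot be absorbed into the constants $\tfrac12$ and $12e^{\beta}$; repairing it requires either extra assumptions on $\mu$ (as in \cite{GKKMR-2022}) or a different event.

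This is exactly why the known proofs are organised differently. Hayakawa, Lyons and Oberhauser work with points rather than directions: their main estimate \eqref{eq:HLO-1}--\eqref{eq:HLO-2} bounds $\mathbb{P}(x\notin K_N)$ for a single point $x$ in terms of $\varphi_{\mu}(x)$ (a Wendel-type containment bound, much stronger than your directional bound), and they then take a union bound over a $\tfrac12$-net $\mathcal{M}$ of $\partial T_p(\mu)$ in the gauge of $T_p(\mu)$, using the symmetry of $T_p(\mu)$ to get ${\rm conv}(\mathcal{M})\supseteq\tfrac12 T_p(\mu)$; the factor $\tfrac12$ comes from this net of points, not from a net of directions. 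Note also that the present paper does not reprove Theorem~\ref{th:HLO}: it quotes it and instead proves the stronger Theorem~\ref{th:deterministic} by applying the Haussler--Welzl $\varepsilon$-net theorem (Lemma~\ref{lem:naszodi}) to the family of supporting half-spaces of $T_p(\mu)$, whose VC-dimension equals $n$ (Lemma~\ref{lem:exact-vc}); a transversal of that family already has convex hull containing all of $T_p(\mu)$, which removes both the symmetry assumption and the factor $\tfrac12$.
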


It was also proved in \cite{HLO-2023} that if $\mu$ is a symmetric Borel probability measure then
$T_p(\mu)$ is ``essentially" the polar set of $U_p(\mu)$. Therefore, Theorem~\ref{th:HLO} implies
the main result of \cite{GKKMR-2022}. Namely, for \eqref{eq:GKKMR} to hold, we do not have
to assume the small ball condition or the $L_r$-condition for $\mu$. In Proposition~\ref{prop:U-T}
we show that $(U_p(\mu))^{\circ}\subseteq T_p(\mu)$ for any (not necessarily symmetric) probability measure
which is ``translated" in such a way that the origin is a center point.

The proof of Theorem~\ref{th:HLO} involves some sharp estimates connecting the Tukey half-space depth function
$\varphi_{\mu}$ with the parameters $p_{N,\mu}(x)=\mathbb{P}(x\in K_N)$
and $N_{\mu}(x)=\min\left\{N\in\mathbb{N}:p_{N,\mu}(x)\gr\tfrac{1}{2}\right\}$.
It is proved in \cite[Proposition~13]{HLO-2023} that
\begin{equation}\label{eq:HLO-1}1-p_{N,\mu}(x)\ls\left(\frac{N\varphi_{\mu}(x)}{n}
\exp\left\{\left(\frac{1}{\varphi_{\mu}(x)}\ln\frac{1}{1-\varphi_{\mu}(x)}\right)
\left(1+\varphi_{\mu}(x)-\frac{N\varphi_{\mu}(x)}{n}\right)\right\}\right)^n\end{equation}
for every $N\gr n/\varphi_{\mu}(x)$. Since $\frac{1}{t}\ln\frac{1}{1-t}\gr 1$ for all $t\in (0,1)$,
if we assume that $\frac{N}{n}\gr \frac{1}{\varphi_{\mu}(x)}+1$ then we may use the simpler bound
\begin{equation}\label{eq:HLO-2}1-p_{N,\mu}(x)\ls\left(\frac{N\varphi_{\mu}(x)}{n}
\exp\left(1+\varphi_{\mu}(x)-\frac{N\varphi_{\mu}(x)}{n}\right)\right)^n\end{equation}
This is the main estimate in \cite{HLO-2023}, which is essential for the proof of Theorem~\ref{th:HLO}.
Another main consequence of \eqref{eq:HLO-1} is the fact that one can relate $N_{\mu}(x)$ and $\varphi_{\mu}(x)$
as follows:
\begin{equation}\label{eq:HLO-3}N_{\mu}(x)\ls\left\lceil\frac{3n}{\varphi_{\mu}(x)}\right\rceil, \end{equation}
an inequality that holds true for any $n$-dimensional probability measure $\mu$ and any $x$ (see \cite[Theorem~16]{HLO-2023}).

\smallskip

\textbf{Our contribution.} Our first main result states that one can have the assertion of
Theorem~\ref{th:HLO} without the assumption of symmetry for the measure $\mu$.

\begin{theorem}\label{th:deterministic}For any $\beta\in (0,1)$ there exists a constant $c(\beta)>0$ such that
the following holds true. If $\mu $ is a Borel probability measure on ${\mathbb R}^n$ then, for every $N\gr c(\beta)n$ we have that
$$K_N\supseteq T_{\beta\ln(\frac{N}{n})}(\mu)$$
with probability greater than $1-\exp(-\tfrac{1}{2}N^{1-\beta}n^{\beta})$.
\end{theorem}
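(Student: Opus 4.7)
The plan is to bound the failure event $\{T_p(\mu) \not\subseteq K_N\}$ by a separation-plus-union argument over the facets of $K_N$. Suppose $x \in T_p(\mu) \setminus K_N$. Then $x$ lies strictly beyond some facet $F$ of the polytope $K_N$; picking $n$ affinely independent vertices of $F$ yields a subset $S \subseteq \{1,\ldots,N\}$ with $|S|=n$ whose affine hyperplane $H_S := \mathrm{aff}(X_i : i \in S)$ carries the supporting hyperplane of $F$. Writing $H^+$ for the closed halfspace bounded by $H_S$ which contains $x$ and $H^-$ for the opposite, one has $x \in H^+$ and $X_j \in H^-$ for every $j \notin S$. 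Thus $\{T_p(\mu) \not\subseteq K_N\}$ is contained in the union, over the at most $2\binom{N}{n}$ choices of $(S,H^+)$, of the events
\[
E_{S,H^+} := \{H^+ \cap T_p(\mu) \neq \emptyset\} \cap \{X_j \in H^- \text{ for every } j \notin S\}.
\]

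The key input comes directly from the definition of the Tukey depth: any $x \in H^+ \cap T_p(\mu)$ satisfies $\mu(H^+) \gr \varphi_\mu(x) \gr e^{-p}$. Conditioning on $(X_i)_{i \in S}$, the first component of $E_{S,H^+}$ is deterministic while the second has conditional probability
\[
\mu(H^-)^{N-n} \ls (1-e^{-p})^{N-n} \ls \exp\bigl(-(N-n)\,e^{-p}\bigr),
\]
under the generic assumption $\mu(H_S)=0$ (atoms on lower-dimensional affine subspaces are handled by approximating $\mu$ by $\mu\ast\gamma_\varepsilon$ and using Hausdorff continuity of $K_N$ together with upper semicontinuity of $\varphi_\mu$). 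Taking expectations and summing over all $2\binom{N}{n}$ pairs gives
\[
\pp(T_p(\mu) \not\subseteq K_N) \ls 2\binom{N}{n}\exp\bigl(-(N-n)\,e^{-p}\bigr) \ls 2\bigl(eN/n\bigr)^n \exp\bigl(-(1-n/N)\,N^{1-\beta} n^\beta\bigr),
\]
after substituting $p = \beta\ln(N/n)$, $e^{-p}=(n/N)^\beta$ and $(N-n)e^{-p} = (1-n/N) N^{1-\beta} n^\beta$.

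It remains to absorb the combinatorial prefactor $(eN/n)^n$ into the exponent; taking logarithms, the target $\pp(\cdots) \ls \exp(-\tfrac12 N^{1-\beta} n^\beta)$ reduces to
\[
(1/2 - n/N)(N/n)^{1-\beta} \gr \ln(eN/n) + \tfrac{\ln 2}{n}.
\]
Since $\beta < 1$, the left-hand side grows polynomially in $N/n$ while the right-hand side grows only logarithmically, so this inequality holds whenever $N/n \gr t_0(\beta)$ for some $t_0$ depending only on $\beta$; set $c(\beta) = \max(4, t_0(\beta))$.

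The main obstacle is precisely this absorption step: the constant $\tfrac12$ in the theorem's exponent essentially matches what the binomial tail $(1-e^{-p})^{N-n}$ produces, so there is very little slack, and the dependence of $c(\beta)$ on $\beta$ is forced by the condition $(N/n)^{1-\beta} \gg \ln(N/n)$. Notably this approach bypasses the sharp pointwise estimates \eqref{eq:HLO-1}--\eqref{eq:HLO-2}, relying only on the definition of the Tukey depth, the separation theorem, and a binomial tail bound; this is exactly what allows us to drop the symmetry hypothesis present in Theorem~\ref{th:HLO}.
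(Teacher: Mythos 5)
Your route is genuinely different from the paper's: you run the classical ``union bound over potential facets'' argument (separate a point of $T_p(\mu)$ from $K_N$, discretize the separating hyperplane by the $n$ sample points spanning it, and pay a binomial tail $(1-e^{-p})^{N-n}$ against a $2\binom{N}{n}$ prefactor), whereas the paper deduces the inclusion from the Koml\'os--Pach--Woeginger $\varepsilon$-net theorem applied to the family of supporting half-spaces of $T_p(\mu)$, whose VC-dimension is exactly $n$ (Lemma~\ref{lem:exact-vc}). For measures assigning zero mass to every hyperplane your computation is correct: the conditional bound $\mu(H^-)^{N-n}\ls(1-e^{-p})^{N-n}$ is valid, and your final absorption of $(eN/n)^n$ into the exponent checks out, yielding the stated probability.

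The gap is in the degenerate cases, and these are not a side issue: handling arbitrary Borel measures (atoms, mass on hyperplanes, lower-dimensional supports) is precisely the content of the theorem and the reason the paper reaches for the $\varepsilon$-net machinery. Three things break for general $\mu$: (i) the $n$ points indexed by $S$ need not be affinely independent, so $H_S$ is not determined by the conditioning and the factorization $\mu(H^-)^{N-n}$ has no meaning; (ii) even when $H_S$ is a hyperplane, one only gets $\mu(H^-)\ls 1-e^{-p}+\mu(H_S)$, and $\mu(H_S)$ can be of constant order; (iii) $K_N$ may have no facets at all. Your parenthetical fix via $\mu\ast\gamma_\varepsilon$ does not repair this at the claimed strength: if $\mu$ places mass on a hyperplane through $x$, then in the limit $\varepsilon\to 0$ one only has $\varphi_{\mu\ast\gamma_\varepsilon}(x)\to\mu(\mathrm{int}\,H^+)+\tfrac12\mu(\partial H^+)$, which can equal $\tfrac12\varphi_\mu(x)$ (e.g.\ $\mu=\tfrac12\delta_{e_1}+\tfrac12\delta_{-e_1}$ at $x=e_1$). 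So to capture $T_p(\mu)$ you must run the smoothed argument at level $p+\ln 2$, which replaces $(N-n)e^{-p}$ by $\tfrac12(N-n)e^{-p}$ in the exponent --- and, as you yourself observe, there is no slack left: the constant $\tfrac12$ in $\exp(-\tfrac12 N^{1-\beta}n^\beta)$ is then unreachable. You would need either a perturbation that provably does not decrease the half-space depth, or a direct combinatorial treatment of degenerate configurations; as written, the proof establishes the theorem only for measures not charging hyperplanes, and a strictly weaker constant in general.
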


Our proof of Theorem~\ref{th:deterministic}, which is valid in full generality, is based on the
$\varepsilon$-net theorem of Haussler and Welzl \cite{Haussler-Welzl} which we introduce and discuss
in Section~\ref{section-3}. We use a sharp version of the $\epsilon$-net theorem that has been used by
Nasz\'{o}di for questions related to random polytopes. It is due to Koml\'{o}s, Pach
and Woeginger, see \cite[Theorem~3.2]{Komlos-Pach-Woeginger}. We will actually exploit a
simplified form of this theorem that appears in \cite[Lemma~3.2]{Naszodi}.

It is interesting to note that Theorem~\ref{th:HLO} is in a sense equivalent to the inequality \eqref{eq:HLO-3}. We present
a very simple argument, which is again based on the $\epsilon$-net theorem, showing that an inequality similar to \eqref{eq:HLO-3},
but slightly weaker, holds true in full generality.

\begin{theorem}\label{th:N-varphi}
Let $\mu$ be a Borel probability measure on $\mathbb{R}^n$. For every $x\in\mathbb{R}^n$ we have that
$$N_{\mu}(x)\ls\frac{cn}{\varphi_{\mu}(x)}\left(1+\ln\left(1/\varphi_{\mu}(x)\right)\right)$$
where $c>0$ is an absolute constant.
\end{theorem}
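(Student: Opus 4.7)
The plan is to reduce the event $\{x\in K_N\}$ to an $\epsilon$-net hitting condition for the family of closed half-spaces in $\mathbb{R}^n$, and then apply the sharp form of the $\epsilon$-net theorem discussed in Section~\ref{section-3} (the Koml\'os--Pach--Woeginger sharpening / Nasz\'odi's Lemma) with $\epsilon=\varphi_\mu(x)$.

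First, I would use the separation theorem: $x\notin K_N=\mathrm{conv}\{X_1,\ldots,X_N\}$ if and only if there is some $v\in S^{n-1}$ with $\langle X_i,v\rangle<\langle x,v\rangle$ for every $i$. Equivalently,
$$x\in K_N\ \Longleftrightarrow\ \text{for every }v\in S^{n-1}\text{ there exists } i\text{ with } X_i\in H_v^+,$$
where $H_v^+:=\{y\in\mathbb{R}^n:\langle y-x,v\rangle\gr 0\}$ is the closed half-space having $x$ on its boundary. Since each $H_v^+$ is a closed half-space containing $x$, the very definition of the Tukey depth gives $\mu(H_v^+)\gr\varphi_\mu(x)$. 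Hence every set which is an $\epsilon$-net, with $\epsilon=\varphi_\mu(x)$, for the range space of closed half-spaces in $\mathbb{R}^n$ (with respect to $\mu$) automatically satisfies $x\in K_N$.

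Second, I would invoke the $\epsilon$-net theorem. The range space of closed half-spaces in $\mathbb{R}^n$ has VC dimension $n+1$, and the sharp form of the theorem yields that an i.i.d.\ sample $\{X_1,\ldots,X_N\}$ drawn from $\mu$ is an $\varphi_\mu(x)$-net with probability at least $1/2$ provided
$$N\gr\frac{cn}{\varphi_\mu(x)}\bigl(1+\ln(1/\varphi_\mu(x))\bigr)$$
for a suitable absolute constant $c>0$ (the additive $1$ absorbs the $\ln(1/\delta)=\ln 2$ term and handles the regime $\varphi_\mu(x)$ close to $1$). Combining with the previous step gives $\mathbb{P}(x\in K_N)\gr 1/2$ for every such $N$, hence the claimed bound on $N_\mu(x)$.

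I do not anticipate a serious obstacle: the whole content is the duality between ``$x$ has Tukey depth at least $\varphi$'' and ``every closed half-space with $x$ on its boundary has $\mu$-measure at least $\varphi$'', which is exactly the hypothesis required to apply the $\epsilon$-net theorem. Working with \emph{closed} half-spaces on both sides is what makes the argument clean and valid for arbitrary Borel $\mu$, as it avoids any issue with atoms of $\mu$ sitting on a separating hyperplane.
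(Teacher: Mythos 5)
Your proposal is correct and is essentially the paper's own argument: the paper likewise reduces $x\in K_N$ to the sample being a transversal of the half-spaces through $x$ (each of measure at least $\varphi_\mu(x)$ by the definition of the Tukey depth) and applies the sharp Koml\'os--Pach--Woeginger $\epsilon$-net theorem with $\epsilon=\varphi_\mu(x)$. The only immaterial differences are that the paper uses VC-dimension $n$ for the subfamily of half-spaces supporting $\{x\}$ (via Lemma~\ref{lem:exact-vc}) rather than $n+1$ for all half-spaces, and carries out the probability computation explicitly with a concrete choice of the parameter $M$, whereas you invoke the net theorem as a black box; both yield the same bound up to the absolute constant.
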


Theorem~\ref{th:deterministic} is the most general in the series of results that we have discussed and hence we
may deduce very general versions of all the consequences that have appeared in earlier works.
Concrete applications of the theorem require computing and estimating the size of the bodies
$T_p(\mu)$ for an individual probability measure $\mu$, which is not a simple task.
In Section~\ref{section-4} we give equivalent and more convenient descriptions
of the bodies $T_p(\mu)$ under additional assumptions on the measure $\mu$. A natural class of measures, that was already
considered in \cite{GKKMR-2022}, is the class of $\alpha$-regular
or $\alpha$-strongly regular measures, which is broader than the class of centered log-concave probability
measures. We say that $\mu$ is $\alpha $-regular if $y\mapsto \|\langle \cdot,y\rangle_+\|_{L^{1}(\mu)}$ is
bounded on $S^{n-1}$ and
$$\|\langle \cdot,y\rangle_+\|_{L^{2p}(\mu)}\ls 2\alpha \,\|\langle \cdot,y\rangle_+\|_{L^{p}(\mu)}$$
for every $y\in\mathbb{R}^n$ and any $p\gr 1$, and that $\mu$ is $\alpha $-strongly regular if it
satisfies the stronger condition
$$\|\langle \cdot ,y\rangle_+\|_{L^{q}(\mu)}
\ls \frac{\alpha q}{p}\,\|\langle \cdot,y\rangle_+\|_{L^{p}(\mu)}$$
for every $y\in\mathbb{R}^n$ and any $q>p\gr 1$, where $a_+=\max\{a,0\}$. We introduce the family $\{Z_p^+(\mu)\}_{p\gr 1}$
of nonsymmetric $L_p$-centroid bodies and the family $\{B_p(\mu)\}_{p>0}$ of the level sets
of the Cram\'{e}r transform of an $\alpha$-strongly regular measure $\mu$ on $\mathbb{R}^n$, and show that
the three families are equivalent up to absolute constants.

\begin{theorem}\label{th:regular-families}
Let $\mu$ be an $\alpha$-regular Borel probability measure on $\mathbb{R}^n$. If $0$ is a center point for
$\mu$ then for every $p\gr \frac{1}{2\ln (2e\alpha )}\ln (n+1)$ we have that
$$Z_p^+(\mu)\subseteq 2T_{2\ln(2e\alpha )p}(\mu).$$
Moreover, if $\mu$ is $\alpha $-strongly regular then
$T_p(\mu)\subseteq B_p(\mu) \subseteq 2Z_{c_1\alpha p}^+(\mu)$ for every $p\gr 1$, where $c_1>0$ is an
absolute constant.
\end{theorem}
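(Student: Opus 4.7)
The theorem splits naturally into three inclusions, to be established in turn:
(i) $Z_p^+(\mu)\subseteq 2T_{2\ln(2e\alpha)p}(\mu)$ under $\alpha$-regularity plus the center-point hypothesis on $0$;
(ii) $T_p(\mu)\subseteq B_p(\mu)$, which needs no regularity;
(iii) $B_p(\mu)\subseteq 2Z_{c_1\alpha p}^+(\mu)$ under $\alpha$-strong regularity. Chaining (ii) and (iii) gives the second statement of the theorem.

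\emph{Inclusion (i).} Fix $x\in Z_p^+(\mu)$ and $\theta\in S^{n-1}$, and set $g=\langle\cdot,\theta\rangle_+$. By definition of the support function of $Z_p^+(\mu)$, $\langle x,\theta\rangle\ls \|g\|_{L^p(\mu)}$, so $\langle x/2,\theta\rangle\ls\|g\|_{L^p(\mu)}/2$. The goal is $\mu(\langle Y,\theta\rangle\gr\langle x/2,\theta\rangle)\gr e^{-q}$ with $q=2\ln(2e\alpha)p$. If $\langle x,\theta\rangle\ls 0$, the probability is at least $\mu(\langle Y,\theta\rangle\gr 0)\gr\varphi_{\mu}(0)\gr 1/(n+1)$ by the center-point hypothesis, and $1/(n+1)\gr e^{-q}$ exactly when $p\gr\ln(n+1)/(2\ln(2e\alpha))$, which explains the stated lower bound on $p$. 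Otherwise, applying Paley--Zygmund to $g^p$ and using $\alpha$-regularity in the form $(\mathbb{E} g^p)^2/\mathbb{E} g^{2p}\gr(2\alpha)^{-2p}$ with $\lambda=2^{-p}$ yields $\mu(g\gr\|g\|_p/2)\gr\tfrac{1}{4}(2\alpha)^{-2p}$, which exceeds $e^{-q}=(2e\alpha)^{-2p}$ for $p\gr 1$.

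\emph{Inclusion (ii).} Write $\Lambda(\xi)=\ln\int e^{\langle y,\xi\rangle}d\mu(y)$ and $\Lambda^{*}$ for its Legendre transform, so $B_p(\mu)=\{x:\Lambda^*(x)\ls p\}$. For each closed half-space $H^+=\{y:\langle y-x,\theta\rangle\gr 0\}$ with $x$ on its boundary, Markov's inequality gives $\mu(H^+)\ls\inf_{\lambda\gr 0}e^{\Lambda(\lambda\theta)-\lambda\langle x,\theta\rangle}$; taking the infimum over $\theta\in S^{n-1}$ and reparametrising $\xi=\lambda\theta$ yields $\varphi_{\mu}(x)\ls e^{-\Lambda^*(x)}$. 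Hence $\varphi_{\mu}(x)\gr e^{-p}$ immediately implies $\Lambda^*(x)\ls p$.

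\emph{Inclusion (iii).} Fix $x\in B_p(\mu)$ and $\theta\in S^{n-1}$; only $\langle x,\theta\rangle>0$ is non-trivial. Setting $g=\langle\cdot,\theta\rangle_+$ and using $s\langle x,\theta\rangle\ls p+\Lambda(s\theta)\ls p+\ln\mathbb{E}[e^{sg}]$ for every $s>0$, the plan is to expand the moment generating function, splitting the Taylor series at $k=q:=c_1\alpha p$. For $k\ls q$ use the trivial $\|g\|_k\ls\|g\|_q$; for $k>q$ apply the strong regularity inequality $\|g\|_k\ls(\alpha k/q)\|g\|_q$ together with $k^k/k!\ls e^k$. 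This yields
\begin{equation*}
\mathbb{E}[e^{sg}]\ls e^{s\|g\|_q}+\sum_{k>q}\Bigl(\frac{s\alpha e\|g\|_q}{q}\Bigr)^{k}.
\end{equation*}
Choosing $s=q/(2e\alpha\|g\|_q)$ turns the tail into a geometric series bounded by $2^{-q}$, while $s\|g\|_q=q/(2e\alpha)$ controls the head. With $c_1$ chosen as a suitable absolute constant (of order $4e$ suffices), the resulting bounds on $p/s$ and $\Lambda(s\theta)/s$ both come out of order $\|g\|_q$ and add up to at most $2\|g\|_q$, completing the inclusion.

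\emph{Main obstacle.} Steps (i) and (ii) are direct applications of Paley--Zygmund and Chernoff once the quantities are lined up. The real work lies in (iii): the MGF of $g$ is only finite near zero, so one must pick the split index $k=q$ and the scale $s\sim q/(\alpha\|g\|_q)$ in such a way that the sub-gaussian-type head $e^{s\|g\|_q}$ and the geometric tail both reduce to constants times $\|g\|_q$ after dividing by $s$. Tracking how the $\alpha$ appearing in strong regularity interacts with the Legendre duality is what forces $q$ to be of order $\alpha p$ rather than just $p$, and pins down the value of the absolute constant $c_1$.
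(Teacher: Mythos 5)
Your proposal is correct and follows essentially the same route as the paper: Paley--Zygmund plus $\alpha$-regularity (and the center-point bound $\varphi_{\mu}(0)\gr 1/(n+1)$ for the negative directions) for the first inclusion, the Chernoff bound $\varphi_{\mu}(x)\ls e^{-\Lambda_{\mu}^{\ast}(x)}$ for the second, and a Taylor expansion of the moment generating function split at $k\approx c\alpha p$ using strong regularity for the third. The only cosmetic difference is that in the last step the paper argues by contraposition through the polar set $W_q^+(\mu)$, whereas you bound $\langle x,\theta\rangle$ directly against the support function $h_{Z_q^+(\mu)}(\theta)$; the computations and the resulting constant are the same.
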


We close Section~\ref{section-4} with a brief discussion of analogous, more precise, results when the measure $\mu$ is assumed
log-concave or $s$-concave (see Section~\ref{section-2} for background information).

\smallskip

Computing the volume of the $L_p$-centroid bodies of a probability measure $\mu$ is possible if we assume
that $\mu$ has a bounded density. We obtain a lower bound, using the family $\{K_p(\mu)\}_{p>0}$
of K.~Ball's star bodies associated with $\mu$ in order to reduce ourselves to the same question for
a star body from this family, and then employing the $L_p$-affine isoperimetric
inequality of Lutwak, Yang and Zhang \cite{Lutwak-Yang-Zhang-2000}, and more precisely
its refined version by Haberl and Schuster from \cite{Haberl-Schuster-2009}.

We say that a Borel probability measure $\mu$ on $\mathbb{R}^n$ belongs to the class $\mathcal{P}_n$ if
it has a bounded density $f_\mu$, the set $K_{f_\mu}=\{f_\mu>0\}$ is convex and has $0$ in its interior,
and the restriction of $f_\mu$ to $K_{f_\mu}$ is continuous. For the next theorem we need to assume
that $\mu\in\mathcal{P}_n$.

\begin{theorem}\label{th:volume}
Let $\mu$ be a probability measure on $\mathbb{R}^n$ which belongs to the class $\mathcal{P}_n$. Then,
\begin{equation*}\vol_n(Z_p^+(\mu))^{1/n}\gr c\|f_{\mu}\|_{\infty}^{-1/n}\sqrt{p/n}\end{equation*}
for every $1\ls p\ls n$, where $Z_p^+(\mu)$ is the nonsymmetric $L_p$-centroid body of $\mu$ and $c>0$ is an absolute constant.
\end{theorem}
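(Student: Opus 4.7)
The plan is to use polar coordinates to absorb the radial integral in
\[h_{Z_p^+(\mu)}(y)^p=\int_{\R^n}\langle x,y\rangle_+^p f_\mu(x)\,dx=\int_{\sfe}\langle\theta,y\rangle_+^p\int_0^\infty r^{n+p-1}f_\mu(r\theta)\,dr\,d\theta\]
into K.~Ball's star body $K_{n+p}(\mu)$, whose radial function is defined by
\[\rho_{K_{n+p}(\mu)}(\theta)^{n+p}=\frac{n+p}{\|f_\mu\|_\infty}\int_0^\infty r^{n+p-1}f_\mu(r\theta)\,dr.\]
Substituting and then reading polar coordinates backwards on $K_{n+p}(\mu)$ produces the identity $h_{Z_p^+(\mu)}(y)^p=\|f_\mu\|_\infty\int_{K_{n+p}(\mu)}\langle x,y\rangle_+^p\,dx$; writing $V=\vol_n(K_{n+p}(\mu))$ and letting $\nu$ be the uniform probability measure on $K_{n+p}(\mu)$, this amounts to $Z_p^+(\mu)=(\|f_\mu\|_\infty V)^{1/p}Z_p^+(\nu)$, and reduces the theorem to a volume lower bound for $Z_p^+(\nu)$.

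For that I would appeal to the Haberl--Schuster refinement of the $L_p$-Busemann--Petty centroid inequality applied to the star body $K_{n+p}(\mu)$: it yields $\vol_n(Z_p^+(\nu))^{1/n}\gr c\sqrt{p/n}\,V^{1/n}$ for an absolute $c>0$, and therefore
\[\vol_n(Z_p^+(\mu))^{1/n}\gr c\sqrt{p/n}\,\|f_\mu\|_\infty^{-1/n}\,(\|f_\mu\|_\infty V)^{1/p+1/n}.\]
Because $1/p+1/n\ls 1+1/n\ls 2$ when $1\ls p\ls n$, the theorem will follow once one has an absolute lower bound $\|f_\mu\|_\infty\vol_n(K_{n+p}(\mu))\gr c_0>0$, independent of $\mu$, $n$ and $p$.

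The main obstacle will be exactly this last inequality, since without log-concavity one has no monotonicity of $q\mapsto\vol_n(K_q(\mu))$ to invoke. My starting point is the trivial identity $\|f_\mu\|_\infty\vol_n(K_n(\mu))=1$, which is immediate from $\int f_\mu=1$ and the definition of $K_n(\mu)$, so that it suffices to prove $\vol_n(K_{n+p}(\mu))\gr c_0\vol_n(K_n(\mu))$. Setting $\phi_\theta(r)=f_\mu(r\theta)/\|f_\mu\|_\infty\in[0,1]$, the trivial Markov-type bound $\int_0^{r_0}r^{n-1}\phi_\theta(r)\,dr\ls r_0^n/n$ combined with $r^{n+p-1}\gr r_0^p\,r^{n-1}$ for $r\gr r_0$ gives the pointwise inequality
\[\rho_{K_{n+p}(\mu)}(\theta)^{n+p}\gr\frac{n+p}{n}\,r_0^p\,\bigl(\rho_{K_n(\mu)}(\theta)^n-r_0^n\bigr),\qquad r_0\in\bigl[0,\rho_{K_n(\mu)}(\theta)\bigr].\]
Optimizing the right-hand side (the maximum is attained at $r_0^n=\tfrac{p}{n+p}\rho_{K_n(\mu)}(\theta)^n$) yields $\rho_{K_{n+p}(\mu)}(\theta)^n\gr (p/(n+p))^{p/(n+p)}\rho_{K_n(\mu)}(\theta)^n$; integrating over $\sfe$ and using $t^t\gr e^{-1/e}$ on $(0,1]$ then produces $\vol_n(K_{n+p}(\mu))\gr e^{-1/e}\vol_n(K_n(\mu))$, which closes the argument with $c_0=e^{-1/e}$.
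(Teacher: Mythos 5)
Your proposal is correct in substance and follows the same overall strategy as the paper: pass from $\mu$ to Ball's body $K_{n+p}(\mu)$ so that $Z_p^+(\mu)$ becomes a dilate of the nonsymmetric centroid body of a uniform measure, and then invoke the Haberl--Schuster $L_p$ centroid inequality. Two points where you genuinely diverge are worth recording. First, you normalize $K_q(\mu)$ by $\|f_\mu\|_\infty$ rather than by $f_\mu(0)$ as in the paper; this streamlines the bookkeeping, since the identity $\|f_\mu\|_\infty\vol_n(K_n(\mu))=1$ replaces the paper's juggling of the ratio $\|f_\mu\|_\infty/f_\mu(0)$. Second, for the comparison of $K_n(\mu)$ with $K_{n+p}(\mu)$ you give a self-contained Markov-plus-optimization argument yielding the constant $e^{-1/e}$. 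This works, but note that your premise that ``without log-concavity one has no monotonicity to invoke'' is not quite right: the paper's Lemma~\ref{lem:inclusions-between-Kp} rests on the elementary fact that $p\mapsto\bigl(\tfrac{p}{\|f\|_\infty}\int_0^\infty x^{p-1}f(x)\,dx\bigr)^{1/p}$ is increasing for \emph{any} bounded integrable $f\gr 0$, with no concavity needed; under your normalization this gives the pointwise inclusion $K_n(\mu)\subseteq K_{n+p}(\mu)$ with constant $1$, strictly better than your $e^{-1/e}$ (which is of course still sufficient, since only an absolute constant is needed).

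The one genuine omission is the regularity needed to apply Haberl--Schuster. Their inequality, as stated in \eqref{eq:Zp-star}, is for \emph{star bodies}, i.e.\ sets whose radial function is continuous and positive on $S^{n-1}$; for a general $\mu\in\mathcal{P}_n$ it is not automatic that $\theta\mapsto\bigl(\int_0^\infty r^{n+p-1}f_\mu(r\theta)\,dr\bigr)^{1/(n+p)}$ is continuous, or even finite for every $\theta$ when $\mu$ is not compactly supported. This is precisely why the paper first treats compactly supported densities (where $K_q(\mu)$ is easily seen to be a star body) and then passes to the general case by truncating $f_\mu$ to $kB_2^n$ and using dominated convergence of the support functions $h_{Z_p^+(\nu_k)}\to h_{Z_p^+(\mu)}$. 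Your argument needs the same approximation step appended at the end; with it, the proof is complete.
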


Theorem~\ref{th:regular-families} and Theorem~\ref{th:volume} allow
us to state and prove a general theorem about the asymptotic shape of random
polytopes with independent vertices that have $\alpha $-regular distribution.

\begin{theorem}\label{th:intro-regular-size}Let $\beta\in (0,1)$ and $\alpha\gr\frac{1}{2}$. Set
$r(\alpha,\beta):=\frac{2\ln(2e\alpha)}{\beta}$ and $t(\alpha,\beta):=\frac{\beta}{2\ln(2e\alpha)}$.
If $\mu $ is an $\alpha$-regular probability measure on $\mathbb{R}^n$ which has $0$ as a center point then
for any $(n+1)^{1+r(\alpha,\beta)}\ls N\ls e^n$
we have that
$$K_N\supseteq \frac{1}{2} Z_{t(\alpha,\beta)\ln\left(\frac{N}{n}\right)}^+(\mu)$$
with probability greater than $1-\exp(-\tfrac{1}{2}N^{1-\beta}n^{\beta})$.
Moreover, if $\mu$ also belongs to the class  $\mathcal{P}_n$, then
for any $(n+1)^{1+r(\alpha,\beta)}\ls N\ls e^n$ we have that
$$\vol_n(K_N)^{1/n}\gr c\sqrt{t(\alpha,\beta)}\|f_{\mu}\|_{\infty}^{-1/n}\frac{\sqrt{\ln(N/n)}}{\sqrt{n}}$$
with the same probability, where $c>0$ is an absolute constant.
\end{theorem}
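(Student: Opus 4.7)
\textbf{Proof plan for Theorem~\ref{th:intro-regular-size}.} The idea is to stitch together the three previous main results. Theorem~\ref{th:deterministic} produces the random inclusion in terms of Tukey level sets; Theorem~\ref{th:regular-families} translates that inclusion into one involving the nonsymmetric centroid bodies $Z_p^+(\mu)$; Theorem~\ref{th:volume} then converts the geometric inclusion into a volume estimate. The bulk of the work is thus bookkeeping of parameters.

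\emph{Step 1: the random inclusion.} Given $\beta\in(0,1)$, apply Theorem~\ref{th:deterministic} with this $\beta$: provided $N\gr c(\beta)n$, with probability at least $1-\exp(-\tfrac{1}{2}N^{1-\beta}n^{\beta})$ we have
$$K_N\supseteq T_{\beta\ln(N/n)}(\mu).$$
I will check, at the end, that the hypothesis $(n+1)^{1+r(\alpha,\beta)}\ls N$ is strong enough to imply $N\gr c(\beta)n$.

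\emph{Step 2: from Tukey body to centroid body.} Set $p:=t(\alpha,\beta)\ln(N/n)$, so that $2\ln(2e\alpha)\,p=\beta\ln(N/n)$. Apply the first part of Theorem~\ref{th:regular-families}, which requires $p\gr \frac{1}{2\ln(2e\alpha)}\ln(n+1)$. This reduces to $\beta\ln(N/n)\gr\ln(n+1)$, i.e.\ $N\gr n(n+1)^{1/\beta}$, which is implied by the assumption $N\gr (n+1)^{1+r(\alpha,\beta)}$ since $r(\alpha,\beta)=2\ln(2e\alpha)/\beta\gr 2/\beta>1/\beta$ (using $\alpha\gr 1/2$). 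The theorem then yields
$$Z_{t(\alpha,\beta)\ln(N/n)}^+(\mu)\;\subseteq\; 2\,T_{\beta\ln(N/n)}(\mu),$$
and combining with Step~1 gives the first claim
$$K_N\supseteq\tfrac{1}{2}Z_{t(\alpha,\beta)\ln(N/n)}^+(\mu)$$
with the stated probability. The same lower bound on $N$ also ensures $N\gr c(\beta)n$ once $n$ is at least some absolute constant, and for small $n$ we can absorb this into $c(\beta)$.

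\emph{Step 3: volume estimate.} Under the extra assumption $\mu\in\mathcal{P}_n$, take $n$-th root of volumes in the inclusion of Step~2 to get
$$\vol_n(K_N)^{1/n}\gr\tfrac{1}{2}\vol_n\bigl(Z_p^+(\mu)\bigr)^{1/n},\qquad p=t(\alpha,\beta)\ln(N/n),$$
and then invoke Theorem~\ref{th:volume}, which requires $1\ls p\ls n$. The lower bound $p\gr 1$ translates into $\ln(N/n)\gr r(\alpha,\beta)$, implied by $N\gr (n+1)^{1+r(\alpha,\beta)}$; the upper bound $p\ls n$ follows from $N\ls e^n$ together with $\beta\ls 2\ln(2e\alpha)$ (true for $\alpha\gr 1/2$ since $2\ln(2e\alpha)\gr 2$). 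Substituting the bound from Theorem~\ref{th:volume} yields
$$\vol_n(K_N)^{1/n}\gr c\sqrt{t(\alpha,\beta)}\,\|f_\mu\|_\infty^{-1/n}\frac{\sqrt{\ln(N/n)}}{\sqrt{n}},$$
as desired.

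\emph{Expected main obstacle.} There is no new probabilistic or geometric content beyond the three theorems being combined; the only real obstacle is a careful check that the hypothesis $(n+1)^{1+r(\alpha,\beta)}\ls N\ls e^n$ simultaneously satisfies \emph{all} the parameter constraints: the lower bound $N\gr c(\beta)n$ of Theorem~\ref{th:deterministic}, the threshold $p\gr\frac{1}{2\ln(2e\alpha)}\ln(n+1)$ of Theorem~\ref{th:regular-families}, and the range $1\ls p\ls n$ of Theorem~\ref{th:volume}. All three amount to explicit elementary inequalities in $n,N,\alpha,\beta$, which I would verify in a short lemma-free paragraph as above.
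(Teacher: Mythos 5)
Your proposal is correct and matches the paper's own proof (given as Theorem~\ref{th:alpha-volume}) essentially step for step: Theorem~\ref{th:deterministic} for the inclusion $K_N\supseteq T_{\beta\ln(N/n)}(\mu)$, Proposition~\ref{prop:measure-1} (equivalently the first part of Theorem~\ref{th:regular-families}) to pass to $Z^+_{t(\alpha,\beta)\ln(N/n)}(\mu)$, and Proposition~\ref{prop:LYZ} for the volume bound, with the same parameter bookkeeping. The only caveat, shared with the paper, is the trivial edge case of very small $n$ where $(n+1)^{1+r(\alpha,\beta)}$ need not dominate $c(\beta)n$, which you correctly note can be absorbed into the constants.
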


In particular, Theorem~\ref{th:intro-regular-size} is valid for every centered log-concave probability
measure $\mu$ on $\mathbb{R}^n$. In this case, or even in the case of a centered $s$-concave measures with $s \in \left[-\frac{1}{2n+1},0\right)$, we obtain a more precise result: for any $c_1(\beta)n \ls N \ls e^n$ we have that
$$\vol_n(K_N)^{1/n}\gr c_2 \sqrt{\beta} \|f_{\mu}\|_{\infty}^{-1/n}\frac{\sqrt{\ln(N/n)}}{\sqrt{n}}$$
with probability greater than $1-\exp(-\tfrac{1}{2}N^{1-\beta}n^{\beta})$, where $c_1(\beta)>0$ is a constant depending only on $\beta$ and $c_2>0$ is an absolute constant.

Compared with previous results (from \cite{DGT1} or \cite{GKKMR-2022})
this particular case of the theorem has the following three advantages:
\begin{enumerate}
\item[(i)] In contrast to \cite{GKKMR-2022}, the measure $\mu$ is not assumed
symmetric; it is just centered.
\item[(ii)] In contrast to \cite{DGT1} the statement is about the random polytope $K_N$ and not about its
absolute convex hull $S_N$.
\item[(iii)] The probability estimate is optimal with respect to $\beta$, $N$ and $n$, the one which is obtained
in \cite{GKKMR-2022}.
\end{enumerate}
Under the log-concavity assumption, a reverse inequality is also possible for the expectation
of $\vol_n(K_N)^{1/n}$. Extending ideas from~\cite{DGT1} we obtain the following ``asymptotic formula"
in the range $n^2\ls N\ls e^n$.

\begin{theorem}\label{th:log-concave-expectation}Let $\mu $ be a centered log-concave probability measure
on $\mathbb{R}^n$. Then, for every $n^2\ls N\ls e^n$ we have that
$$\mathbb{E}\big(\vol_n(K_N)^{1/n}\big)\approx \mathbb{E}\big(\vol_n(S_N)^{1/n}\big)
\approx \frac{\sqrt{\ln N}}{\sqrt{n}}.$$
\end{theorem}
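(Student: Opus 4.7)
The plan is to prove matching lower and upper bounds of order $\sqrt{\ln N}/\sqrt{n}$. Since $K_N\subseteq S_N$ gives $\vol_n(K_N)\ls \vol_n(S_N)$, it is enough to establish a lower bound for $\mathbb{E}(\vol_n(K_N)^{1/n})$ and an upper bound for $\mathbb{E}(\vol_n(S_N)^{1/n})$; the two remaining inequalities are then automatic.

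For the lower bound I would apply the log-concave corollary of Theorem~\ref{th:intro-regular-size} stated in the paragraph following its proof: fixing, say, $\beta=\tfrac{1}{2}$, for every $c_1n\ls N\ls e^n$,
\begin{equation*}
\vol_n(K_N)^{1/n}\gr c\,\|f_{\mu}\|_{\infty}^{-1/n}\,\frac{\sqrt{\ln(N/n)}}{\sqrt{n}}
\end{equation*}
with probability at least $1-\exp(-\tfrac{1}{2}\sqrt{Nn})$. In the range $N\gr n^2$ one has $\ln(N/n)\gr \tfrac{1}{2}\ln N$, and the exceptional event has such small probability that it can be absorbed into any crude deterministic upper bound on $\vol_n(K_N)$ (for example by a tail estimate on $\max_{i\ls N}|X_i|$, which is at most polynomial in $n$ with overwhelming probability for log-concave $\mu$). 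Taking expectation yields the lower bound for $\mathbb{E}(\vol_n(K_N)^{1/n})$, and $S_N\supseteq K_N$ transfers it to $\mathbb{E}(\vol_n(S_N)^{1/n})$.

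For the upper bound I would extend the approach of \cite{DGT1}. Integrating the empirical support function $h_{S_N}(y)=\max_{i\ls N}|\langle X_i,y\rangle|$ over $\sfe$ in polar coordinates, and combining the $\psi_1$-estimate $\|\langle\cdot,y\rangle\|_{L^q(\mu)}\ls Cq\|\langle\cdot,y\rangle\|_{L^1(\mu)}$ (a consequence of Borell's lemma for log-concave measures) with the standard maximal bound $\mathbb{E}\max_{i\ls N}|Z_i|\ls Cq\,\|Z\|_{L^q(\mu)}$ applied at $q=\ln N$, one obtains
\begin{equation*}
\mathbb{E}\big(\vol_n(S_N)^{1/n}\big)\ls C\,\vol_n\big(Z_{\ln(eN/n)}(\mu)\big)^{1/n}.
\end{equation*}
This is combined with the classical volume estimate $\vol_n(Z_p(\mu))^{1/n}\ls C'\|f_{\mu}\|_{\infty}^{-1/n}\sqrt{p/n}$ for $1\ls p\ls n$ (provable via Ball's bodies $K_p(\mu)$ and the $L_p$-affine isoperimetric inequality, as in Section~\ref{section-4}). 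In the range $n^2\ls N\ls e^n$ the parameter $p=\ln(eN/n)$ lies in $[1,n]$ and is comparable to $\ln N$, so the two estimates combine to give the desired upper bound.

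The main obstacle is the DGT1-style comparison $\mathbb{E}(\vol_n(S_N)^{1/n})\ls C\,\vol_n(Z_{\ln(eN/n)}(\mu))^{1/n}$: although the argument is by now standard, one has to carefully track the $\|f_{\mu}\|_{\infty}^{-1/n}$ normalization (which is what the symbol $\approx$ in the statement absorbs) and check that the exponent $\ln(eN/n)$ does reduce to $\ln N$ in the prescribed range. The asymmetry of $\mu$ is circumvented by using $S_N$ on the upper side and $K_N$ on the lower side, so that the nonsymmetric Theorem~\ref{th:intro-regular-size} is the key input that allows the log-concave case to be handled without any symmetry hypothesis.
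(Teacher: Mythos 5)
Your lower bound follows the paper's route exactly: Proposition~\ref{prop:kconc} with $\beta=\tfrac12$ together with the trivial estimate $\mathbb{E}(X)\gr v_0\,\mathbb{P}(X\gr v_0)$ (since $\vol_n(K_N)^{1/n}\gr 0$, you do not even need to control the contribution of the exceptional event from above). The upper bound, however, has a genuine gap at its central step. Integrating the support function $h_{S_N}$ over the sphere produces the mean width $w(S_N)$, and the maximal inequality at $q=\ln N$ does give $\mathbb{E}\,w(S_N)\ls C\,w(Z_{\ln N}(\mu))$; but mean width controls volume only through Urysohn's inequality $\vol_n(S_N)^{1/n}\ls c\,n^{-1/2}w(S_N)$, so what your chain actually delivers is $\mathbb{E}\big(\vol_n(S_N)^{1/n}\big)\ls c\,n^{-1/2}\,w(Z_{\ln N}(\mu))$ and \emph{not} $C\,\vol_n(Z_{\ln N}(\mu))^{1/n}$. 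The mean width of $Z_p(\mu)$ is not bounded above by its volume radius: the elementary directional estimate for an isotropic log-concave $\mu$ is only $h_{Z_p(\mu)}(\xi)\ls Cp$, giving $w(Z_p(\mu))\ls Cp$ and hence the bound $\ln N/\sqrt{n}$ rather than $\sqrt{\ln N}/\sqrt{n}$; the inequality $w(Z_p(\mu))\ls C\sqrt{p}$ is a hard problem (at $p=n$ it is the mean-width problem for isotropic convex bodies) and is certainly not a consequence of the Lutwak--Yang--Zhang volume estimate you invoke. So the claimed intermediate inequality $\mathbb{E}(\vol_n(S_N)^{1/n})\ls C\,\vol_n(Z_{\ln(eN/n)}(\mu))^{1/n}$ does not follow from integrating $h_{S_N}$ over the sphere.

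The paper circumvents exactly this point by working with \emph{negative} moments of the support function. One first shows $\vol_n(S_N)^{1/n}\ls c\,n^{-1/2}w_{-p}(S_N)$ using H\"{o}lder's inequality together with the Blaschke--Santal\'{o} inequality, then splits, by Cauchy--Schwarz, $w_{-p/2}(S_N)\ls w_{-p/4}(Z_p(\mu))\big(\int_{S^{n-1}}(h_{S_N}/h_{Z_p(\mu)})^{p/2}d\sigma\big)^{2/p}$; the last factor is controlled in expectation by the pointwise Markov/union bound of Lemma~\ref{lem:comp}, and finally Paouris's theorem $w_{-q}(Z_q(\mu))\approx\sqrt{q/n}\,I_{-q}(\mu)\ls\sqrt{q}$ (for isotropic $\mu$ and $q\ls n-1$) yields $w_{-p/4}(Z_p(\mu))\ls c\sqrt{p}$. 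The use of $w_{-p}$ together with Paouris's negative-moment estimate is the missing idea in your proposal; without it the directional comparison of $h_{S_N}$ with $h_{Z_{\ln N}(\mu)}$ cannot be upgraded to the stated volume bound.
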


We refer to Schneider's book \cite{Schneider-book} for classical facts from the Brunn-Minkowski theory and to the book
\cite{AGA-book} for an exposition of the main results from asymptotic convex geometry. We also refer
to \cite{BGVV-book} for more information on isotropic convex bodies and log-concave probability measures.

%%%%%%%%%%%%%%%%%%%%%%%%%%%%%%%%%%%%%%%%%%%%%%%%%%%%%%%%%%%%%%%%%%%%%%%%%%%%%%%%%%%%%%%%%%%%%%%%%%%%%%%%%%%%%%%%
\section{Notation and background information}\label{section-2}
%%%%%%%%%%%%%%%%%%%%%%%%%%%%%%%%%%%%%%%%%%%%%%%%%%%%%%%%%%%%%%%%%%%%%%%%%%%%%%%%%%%%%%%%%%%%%%%%%%%%%%%%%%%%%%%%

First we recall some basic notation and definitions from convex geometry. We work in ${\mathbb R}^n$, which is equipped with the standard inner product  $\langle\cdot ,\cdot\rangle $. We denote the corresponding Euclidean norm by $|\cdot |$, and write $B_2^n$ for the Euclidean unit ball, and $S^{n-1}$ for the unit sphere. Volume in $\mathbb{R}^n$ is denoted by $\vol_n$. We write $\omega_n$ for the volume of $B_2^n$ and $\sigma $ for the rotationally invariant probability measure on $S^{n-1}$.  For any $u\in\mathbb{R}^n\setminus\{0\}$ and $\alpha\in\mathbb{R}$ we
define
\begin{align*}
H(\alpha,u) &=\{y\in\mathbb{R}^n:\langle y,u\rangle =\alpha\},\\
H^+(\alpha,u) &=\{y\in\mathbb{R}^n:\langle y,u\rangle \gr\alpha\},\\
H^-(\alpha,u) &=\{y\in\mathbb{R}^n:\langle y,u\rangle \ls\alpha\}.
\end{align*}

The letters $c,c^{\prime }, c_1, c_2$ etc. denote absolute positive constants whose value may change from line to line. Whenever we
write $a\approx b$, we mean that there exist absolute constants $c_1,c_2>0$ such that $c_1a\ls b\ls c_2a$.  Also if $A,B\subseteq
\mathbb R^n$ we shall write $A\approx B$ if there exist absolute constants $c_1, c_2>0$ such that $c_{1}A\subseteq B\subseteq c_{2}A$.

A non-empty set $A \subseteq \mathbb{R}^n$ is called {\it star-shaped at the origin} if for every $x \in A$ and any $0 \ls \lambda \ls 1$, we have that $\lambda x \in A$. The radial function $\rho_A:\mathbb R^n\setminus \{0\}\to \mathbb R^+$ of a star-shaped at the origin
set $A$ is the function $\rho_{A}(x)=\sup\{t\gr 0 : tx\in A\}$. If $\rho_A$ is continuous and positive on $S^{n-1}$, then we say that
$A$ is a {\it star body}. The polar set of a star-shaped at the origin set $A$ is
\begin{equation}A^\circ=\{x\in \mathbb R^n : \langle x,y\rangle \ls 1 \;\hbox{for all}\; y\in A\}.\end{equation}

A {\it convex body} in ${\mathbb R}^n$ is a compact convex subset $K$ of ${\mathbb R}^n$ with non-empty interior.
We say that $K$ is symmetric if $K=-K$, and that $K$ is centered if its barycenter ${\rm bar}(K)$ is at the origin, i.e. if
\begin{equation*}\int_K\langle x,\xi\rangle \, dx=0\end{equation*} for every $\xi\in S^{n-1}$.
The support function of $K$ is defined for every $y\in {\mathbb R}^n$ by
$h_K(y)=\max \{\langle x,y\rangle :x\in K\}$.
The volume radius of $K$ is the quantity
\begin{equation*}\vrad(K)=\left (\frac{\vol_n(K)}{\vol_n(B_2^n)}\right )^{1/n}.\end{equation*}
Note that $\omega_n^{1/n}\approx 1/\sqrt{n}$. Therefore, $\vrad(K)\approx \sqrt{n}\,\vol_n(K)^{1/n}$.
We also write $\overline{K}$ for the multiple of $K\subseteq \mathbb R^n$ that has volume $1$; in other words, $\overline{K}:=\vol_n(K)^{-1/n}K$.

We say that a Borel probability measure $\mu $ on $\mathbb{R}^n$ is symmetric if $\mu (-B)=\mu (B)$ for every Borel subset $B$
of ${\mathbb R}^n$ and that $\mu $ is centered if the barycenter ${\rm bar}(\mu)=\int_{\mathbb{R}^n}x\,d\mu(x)$ of $\mu$ is at the origin, i.e.
\begin{equation*}\int_{\mathbb R^n} \langle x, \xi \rangle d\mu(x)= 0\end{equation*}
for all $\xi\in S^{n-1}$. Moreover, we say that $\mu$ is full-dimensional if $\mu(H)<1$ for every
hyperplane $H$ in ${\mathbb R}^n$.

A Borel measure $\mu$ on $\mathbb R^n$ is called log-concave if it is full-dimensional and
$$\mu(\lambda A+(1-\lambda)B) \gr \mu(A)^{\lambda}\mu(B)^{1-\lambda}$$
for any pair of compact sets $A,B$ in ${\mathbb R}^n$ and any $\lambda \in (0,1)$. Borell \cite{Borell-1974} has proved that, under these assumptions, $\mu $ has a log-concave density $f_{{\mu }}$. Recall that a function $f:\mathbb R^n \rightarrow [0,\infty)$ is called log-concave if its support $\{f>0\}$ is a convex set in ${\mathbb R}^n$ and the restriction of $\ln{f}$ to it is concave. The Brunn-Minkowski inequality implies that if $K$ is a convex body in $\mathbb R^n$ then the indicator function $\mathds{1}_{K} $ of $K$ is the density of a log-concave measure, the Lebesgue measure on $K$. If $\mu$ is symmetric then $f_{\mu}$ is even and it follows that $\|f_{\mu}\|_{\infty}=f_{\mu}(0)$. On the other hand, Fradelizi \cite{Fradelizi-1997} has shown that if $\mu $ is a centered log-concave probability measure then $\|f_{\mu }\|_{\infty }\ls e^nf_{\mu }(0)$.

A consequence of Borell's lemma \cite[Lemma~2.4.5]{BGVV-book} is the fact that for any seminorm
$f:\mathbb{R}^n\to \mathbb{R}$ and any $q>p\gr 1$, we have the Kahane-Khintchine inequalities
\begin{equation}\label{eq:kahane}\|f\|_{L^p(\mu)}\ls \|f\|_{L^q(\mu)}\ls C\frac{q}{p}\|f\|_{L^p(\mu)},\end{equation}
where $C>0$ is an absolute constant (see \cite[Theorem~2.4.6]{BGVV-book} for a proof).

For any log-concave measure $\mu$ on ${\mathbb R}^n$ with density $f_{\mu}$, we define the isotropic constant of $\mu $ by
\begin{equation*}
L_{\mu }:=\left (\frac{\sup_{x\in {\mathbb R}^n} f_{\mu} (x)}{\int_{{\mathbb
R}^n}f_{\mu}(x)dx}\right )^{\frac{1}{n}} \big(\det \textrm{Cov}(\mu)\big)^{\frac{1}{2n}},\end{equation*}
where $\textrm{Cov}(\mu)$ is the covariance matrix of $\mu$ with entries
\begin{equation*}\textrm{Cov}(\mu )_{ij}:=\frac{\int_{{\mathbb R}^n}x_ix_j f_{\mu}
(x)\,dx}{\int_{{\mathbb R}^n} f_{\mu} (x)\,dx}-\frac{\int_{{\mathbb
R}^n}x_i f_{\mu} (x)\,dx}{\int_{{\mathbb R}^n} f_{\mu}
(x)\,dx}\frac{\int_{{\mathbb R}^n}x_j f_{\mu}
(x)\,dx}{\int_{{\mathbb R}^n} f_{\mu} (x)\,dx}.\end{equation*}
A log-concave probability measure $\mu $ on ${\mathbb R}^n$
is called isotropic if it is centered and $\textrm{Cov}(\mu )=I_n$,
where $I_n$ is the identity $n\times n$ matrix. Note that if $\mu$ is isotropic then $L_{\mu }=\|f_{\mu }\|_{\infty }^{1/n}$.
It is not hard to check that for every log-concave probability measure $\mu $ on $\mathbb{R}^n$ there exists
an invertible affine transformation $G$ such that the log-concave probability measure $G_{\ast }\mu $ defined by
\begin{equation*}G_{\ast }\mu (B)=\mu (G^{-1}(B))\end{equation*}is isotropic,
and $L_{G_{\ast}\mu}=L_{\mu}$.

The hyperplane conjecture asks if there exists an absolute constant $C>0$ such that
\begin{equation*}L_n:= \max\{ L_{\mu }:\mu\ \hbox{is an isotropic log-concave probability measure on}\ {\mathbb R}^n\}\ls C\end{equation*}
for all $n\gr 2$. The classical estimates $L_n\ls c\sqrt[4]{n}\ln n$ by Bourgain \cite{Bourgain-1991}
and, fifteen years later, $L_n\ls c\sqrt[4]{n}$ by Klartag \cite{Klartag-2006} remained the best known until
2020. In a breakthrough work, Chen \cite{C} proved that for any $\epsilon >0$
one has $L_n\ls n^{\epsilon}$ for all large enough $n$. This development was the starting point for
a series of important works that culminated in the final affirmative answer to the problem
by Klartag and Lehec \cite{KL} after an important contribution of Guan \cite{Guan}. Soon afterwards, one more
proof of the hyperplane conjecture was presented by Bizeul \cite{Bizeul}.

We will also consider $s$-concave measures. We say that a measure $\mu $ on
${\mathbb R}^n$ is $s$-concave for some $-\infty\ls s\ls 1/n$ if
\begin{equation}\label{eq:s-concave-measure-1}\mu ((1-\lambda )A+\lambda B)\gr
((1-\lambda )\mu^{s}(A)+\lambda \mu^{s}(B))^{1/s}\end{equation}
for any pair of compact sets $A,B$ in ${\mathbb R}^n$ with $\mu (A)\mu (B)>0$ and any $\lambda\in (0,1)$.
We can also consider the limiting cases $s =0$, where the right-hand side in \eqref{eq:s-concave-measure-1}
should be interpreted as $\mu(A)^{1-\lambda }\mu (B)^{\lambda }$, and $s=-\infty $, where the right-hand side in \eqref{eq:s-concave-measure-1} becomes $\min\{\mu (A),\mu (B)\}$. Note that $0$-concave measures are the log-concave
measures and that if $\mu $ is $s$-concave and $s^{\prime}\ls s$ then $\mu $ is also $s^{\prime}$-concave.
We shall extend some of our results to $s$-concave measures with $s\in (-\infty,0)$. These classes of
measures are strictly larger than the class of log-concave measures.

A function $f:\mathbb R^n\to [0,\infty)$ is called $\gamma $-concave for some $\gamma\in [-\infty ,\infty ]$ if
\begin{equation*}f((1-\lambda )x+\lambda y)\gr ((1-\lambda )f^{\gamma }(x)+\lambda f^{\gamma }(y))^{1/\gamma }\end{equation*}
for all $x,y\in {\mathbb R}^n$ with $f(x)f(y)>0$ and all $\lambda\in (0,1)$. One can also
define the cases $\gamma =0,+\infty $ appropriately. Borell \cite{Borell-1975}
showed that if $\mu $ is a measure on ${\mathbb R}^n$ and the affine subspace $F$ spanned by the support
${\rm supp}(\mu )$ of $\mu $ has dimension ${\rm dim}(F)=n$ then for every $-\infty \ls s <1/n$ we have that
$\mu $ is $s$-concave if and only if it has a non-negative density $f\in L_{{\rm loc}}^1({\mathbb R}^n,dx)$
which is $-\frac{1}{\alpha}$-concave, where $-\frac{1}{\alpha}=\frac{s}{1-sn}$, or equivalently,
$\alpha =n-\frac{1}{s}>n$.

%%%%%%%%%%%%%%%%%%%%%%%%%%%%%%%%%%%%%%%%%%%%%%%%%%%%%%%%%%%%%%%%%%%%%%%%%%%%%%%%%%%%%%%%%%%%%%%%%%%%%%%%%%%%%%%%
\section{Deterministic interior body of random polytopes}\label{section-3}
%%%%%%%%%%%%%%%%%%%%%%%%%%%%%%%%%%%%%%%%%%%%%%%%%%%%%%%%%%%%%%%%%%%%%%%%%%%%%%%%%%%%%%%%%%%%%%%%%%%%%%%%%%%%%%%%

Let $\mu $ be a Borel probability measure on ${\mathbb R}^n$. Recall from the introduction that the Tukey half-space
depth with respect to $\mu$ is the function defined for any $x\in {\mathbb R}^n$ by
$\varphi_{\mu }(x)=\inf\{\mu (H^+):H^+\in {\cal H}(x)\}$,
where ${\cal H}(x)$ is the set of all closed half-spaces $H^+$ of ${\mathbb R}^n$ containing $x$.
It is known that $\varphi_{\mu}$ attains its maximum (see Lemma~\ref{lem:T} below) and
\begin{equation}\label{eq:T1}\frac{1}{n+1}\ls \max(\varphi_{\mu})\ls \frac{1}{2}\big(1+\sup\{\mu(\{x\}):x\in\mathbb{R}^n\}\big).\end{equation}
The left-hand side inequality is a well-known fact (see Rado~\cite{Rado}):
for every Borel probability measure $\mu$ on $\mathbb{R}^n$ there exists a point $x_0\in {\rm supp}(\mu)$ such that
$$\varphi_{\mu}(x_0)\gr \frac{1}{n+1}.$$
For the right-hand side inequality of \eqref{eq:T1}  see \cite[Lemma~1]{RR-1999}.

For every $p>0$ we define the set
$$T_p(\mu)=\{x\in \mathbb{R}^n:\varphi_{\mu}(x)\gr e^{-p}\}.$$
Note that the half-space depth is invariant under affine transformations $G$ of full rank; one can easily see that
$\varphi_{G_{\ast}\mu}(G(x))=\varphi_{\mu}(x)$ for every $x\in\mathbb{R}^n$. It follows that
$$T_p(G_{\ast}\mu)=G(T_p(\mu))$$
for all $p>0$.

In the next lemma we collect the basic properties of the family $\{T_p(\mu)\}_{p>0}$ and provide references
for their proofs.

\begin{lemma}\label{lem:T}Let $\mu $ be a Borel probability measure on ${\mathbb R}^n$. Define $p(\mu)$
by the equation
\begin{equation}\label{eq:def-pmu}\max(\varphi_{\mu})=e^{-p(\mu)}.\end{equation}
Then, $T_p(\mu)$ is a non-empty compact convex set for all $p\gr p(\mu)$.
\end{lemma}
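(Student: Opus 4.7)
The plan is to treat the four required properties in turn: convexity via quasi-concavity of $\varphi_{\mu}$, closedness via its upper semicontinuity, boundedness by pushing test half-spaces out to infinity along an escape direction, and non-emptiness via Rado's bound combined with the preceding compactness.

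First I would verify convexity by a direct half-space argument. Given $x,y\in T_p(\mu)$, $\lambda\in[0,1]$ and $z=\lambda x+(1-\lambda)y$, pick any closed half-space $H^+=H^+(\alpha,u)$ with $z\in H^+$. Since $\langle z,u\rangle\gr\alpha$ is a convex combination of $\langle x,u\rangle$ and $\langle y,u\rangle$, at least one of $x,y$ lies in $H^+$, so $\mu(H^+)\gr\min\{\varphi_{\mu}(x),\varphi_{\mu}(y)\}\gr e^{-p}$, and taking the infimum over such $H^+$ yields $\varphi_{\mu}(z)\gr e^{-p}$. For closedness I would fix $x_k\in T_p(\mu)$ with $x_k\to x$ and an arbitrary closed half-space $H^+=H^+(\alpha,u)$ containing $x$; the slightly enlarged $H_\e^+:=H^+(\alpha-\e,u)$ then contains $x$ in its interior, so $x_k\in H_\e^+$ for $k$ large, which forces $\mu(H_\e^+)\gr\varphi_{\mu}(x_k)\gr e^{-p}$. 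Since $H_\e^+\downarrow H^+$ as $\e\downarrow 0$, continuity of the finite measure $\mu$ from above gives $\mu(H^+)\gr e^{-p}$, hence $x\in T_p(\mu)$.

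For boundedness I would argue by contradiction: if $x_k\in T_p(\mu)$ with $|x_k|\to\infty$, after passing to a subsequence $x_k/|x_k|\to u\in S^{n-1}$ and therefore $\langle x_k,u\rangle\to+\infty$. The half-spaces $H_k^+=\{y:\langle y,u\rangle\gr\langle x_k,u\rangle\}$ contain $x_k$ on their boundary and thus satisfy $\mu(H_k^+)\gr e^{-p}$, yet (extracting further so that $\langle x_k,u\rangle$ is increasing) they form a decreasing sequence with empty intersection, so continuity from above forces $\mu(H_k^+)\to 0$, a contradiction. Non-emptiness then follows from Rado's theorem: the super-level set $\{\varphi_\mu\gr 1/(n+1)\}$ is non-empty, and by the preceding steps it is compact; the upper semicontinuous function $\varphi_\mu$ attains its maximum there, which is clearly the global maximum. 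Thus $p(\mu)$ is well defined, and for every $p\gr p(\mu)$ any center point $x^{\ast}$ of $\mu$ satisfies $\varphi_\mu(x^{\ast})=e^{-p(\mu)}\gr e^{-p}$, so $x^{\ast}\in T_p(\mu)$.

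The only delicate step is the upper semicontinuity argument, where one has to upgrade a half-space $H^+\ni x$ to slightly enlarged half-spaces containing the approaching $x_k$ and then invoke continuity from above of $\mu$ on the nested family $H_\e^+\downarrow H^+$; the remaining verifications are essentially bookkeeping.
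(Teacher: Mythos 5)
Your proof is correct and follows the same skeleton as the paper's (convexity via the observation that any half-space containing $z\in[x,y]$ must contain $x$ or $y$, plus closedness, boundedness and non-emptiness), but you make it fully self-contained where the paper leans on citations. For closedness the paper simply quotes the upper semicontinuity of $\varphi_{\mu}$ from Donoho--Gasko, whereas you prove it directly by enlarging the test half-space to $H^+(\alpha-\e,u)$ and using continuity of $\mu$ from above along $H^+_{\e}\downarrow H^+$; this is a clean and complete argument. For boundedness the paper chooses $R(p)$ with $\mu(R(p)B_2^n)>1-e^{-p}$ and observes that points outside this ball have depth $<e^{-p}$, while you run a compactness-free contradiction along an escape direction $u$, again via continuity from above on the nested half-spaces $\{\langle\cdot,u\rangle\gr\langle x_k,u\rangle\}$ with empty intersection; both work, yours avoiding the small geometric step of exhibiting a separating half-space but requiring the subsequence extraction. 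Finally, the paper cites Rousseeuw--Ruts for the attainment of $\max(\varphi_{\mu})$, whereas you deduce it from Rado's bound together with the compactness of $T_{\ln(n+1)}(\mu)$ and the upper semicontinuity you already established; this is a correct and arguably more transparent derivation, the only point worth making explicit being that the supremum over the compact superlevel set is indeed the global supremum because $\varphi_{\mu}<\tfrac{1}{n+1}$ off that set.
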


\begin{proof}Let $p\gr p(\mu)$. By the continuity of measure there exists $R(p)>0$ such that
$\mu(\{x\in\mathbb{R}^n:|x|\ls R(p)\})>1-e^{-p}$. Then, for any $x\in\mathbb{R}^n$
with $|x|>R(p)$ we have that $\varphi_{\mu}(x)<e^{-p}$. This shows
that $T_p(\mu)$ is bounded. To see that $T_p(\mu)$ is convex, note that if $x,y\in T_p(\mu)$ then for
any $z\in [x,y]$ and any $H^+\in {\cal H}(z)$ we have that either $x$ or $y$ belongs to $H^+$, and hence $\mu(H^+)\gr \min\{\varphi_{\mu}(x),\varphi_{\mu}(y)\}\gr e^{-p}$, therefore $\varphi_{\mu}(z)\gr e^{-p}$, which implies that $z\in T_p(\mu)$.
Finally, $\varphi_{\mu}$ is upper-semicontinuous (see \cite[Lemma~6.1]{Donoho-Gasko-1992}), therefore
$T_p(\mu)$ is closed.

The fact that the value $p(\mu)$ is attained is proved in \cite[Proposition~7]{RR-1999}. It is also
clear from the definition that $\{T_p(\mu)\}_{p>0}$ is an increasing family of sets. Therefore, $T_p(\mu)$
is a non-empty compact convex set for all $p\gr p(\mu)$.
\end{proof}

Every point $x$ that satisfies $\varphi_{\mu}(x)=e^{-p(\mu)}$ is called a center point for $\mu$. We note that if $\mu$ is symmetric, then $0$ is a center point.

\smallskip

Our first main goal in this section is to prove Theorem~\ref{th:deterministic}, which extends Theorem~\ref{th:HLO} to
the case of an arbitrary Borel probability measure $\mu$ on $\mathbb{R}^n$. The proof makes use of the $\varepsilon$-net theorem.
In order to formulate the latter, we need to introduce a number of notions. Let ${\cal F}$ be a family of subsets of a set $\Omega$.
The Vapnik-Chervonenkis dimension (VC-dimension) of ${\cal F}$ is the maximal cardinality of a finite set $A\subset \Omega$
which is shattered by ${\cal F}$, i.e.
$$\{V\cap A:V\in {\cal F}\}=2^A.$$
We also say that a set $B\subset \Omega$ is a transversal of ${\cal F}$ if $B\cap V\neq\varnothing $ for all $V\in {\cal F}$.

We shall use a strong version of the $\varepsilon$-net theorem that was proved by Koml\'{o}s, Pach and Woeginger in~\cite[Theorem~3.2]{Komlos-Pach-Woeginger}.

\begin{theorem}\label{th:epsilon-net}Let $\varepsilon>0$ and ${\cal F}$ be a family of subsets of a probability space $(\Omega,\mu )$ such that ${\cal F}$ has VC-dimension at most $d$ and $\mu (V)\gr\varepsilon $ for all $V\in {\cal F}$. Then, for any positive integers $M>N$
we have that $N$ independent random points $X_1,\ldots ,X_N$ distributed according to $\mu $ form a transversal of ${\cal F}$ with probability greater than
\begin{equation}\label{eq:almost-tight}1-2\left(\sum_{i=0}^d\binom{M}{i}\right)\left(1-\frac{N}{M}\right)^{(M-N)\varepsilon -1}.\end{equation}
\end{theorem}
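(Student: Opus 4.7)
The plan is to run the classical double-sampling argument of Vapnik--Chervonenkis, in the sharp form of Haussler--Welzl and Koml\'{o}s--Pach--Woeginger. Let $A$ be the event that $\{X_1,\ldots,X_N\}$ is \emph{not} a transversal of $\mathcal{F}$, so that some $V\in\mathcal{F}$ with $\mu(V)\gr\e$ is missed. I introduce a ghost sample of $M-N$ additional independent $\mu$-distributed points $X_{N+1},\ldots,X_M$ and define the auxiliary event $B$: there exists $V\in\mathcal{F}$ with $V\cap\{X_1,\ldots,X_N\}=\varnothing$ and $|V\cap\{X_{N+1},\ldots,X_M\}|\gr t$, where the threshold is chosen to be $t:=\lfloor (M-N)\e\rfloor$. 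The aim is to show $\P(A)\ls 2\P(B)$ together with a Sauer--Shelah-type bound on $\P(B)$.

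For the first inequality: conditional on $A$, a measurable selection picks some bad $V_0$ depending only on $X_1,\ldots,X_N$. Since the ghost sample is independent, $|V_0\cap\{X_{N+1},\ldots,X_M\}|\sim \mathrm{Bin}(M-N,\mu(V_0))$ has mean at least $(M-N)\e$. Using the classical fact that the median of a $\mathrm{Bin}(m,p)$ distribution is either $\lfloor mp\rfloor$ or $\lceil mp\rceil$, this count is at least $t$ with probability at least $1/2$. Hence $\P(B\mid A)\gr 1/2$, and so $\P(A)\ls 2\P(B)$.

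To bound $\P(B)$ I exploit the exchangeability of $(X_1,\ldots,X_M)$. For $V\in\mathcal{F}$ write $S_V:=\{i\ls M: X_i\in V\}\subseteq\{1,\ldots,M\}$. Conditioning on the unordered multiset $\{X_1,\ldots,X_M\}$, the set of indices filling the first $N$ positions is a uniform random $N$-subset of $\{1,\ldots,M\}$, so for any fixed trace $S$ the probability that the first $N$ points miss $S$ (equivalently, that all of $S$ lies in the last $M-N$ positions) equals $\binom{M-|S|}{N}/\binom{M}{N}=\binom{M-N}{|S|}/\binom{M}{|S|}$. A direct product expansion yields
\[
\binom{M-N}{|S|}\Big/\binom{M}{|S|}=\prod_{j=0}^{|S|-1}\frac{M-N-j}{M-j}\ls \Bigl(1-\frac{N}{M}\Bigr)^{|S|},
\]
and since this ratio is decreasing in $|S|$, for every trace with $|S|\gr t$ it is bounded by $(1-N/M)^{t}\ls (1-N/M)^{(M-N)\e-1}$, using $t>(M-N)\e-1$ and $1-N/M\in(0,1)$. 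By the Sauer--Shelah lemma applied to the restriction of $\mathcal{F}$ to the $M$-element set $\{X_1,\ldots,X_M\}$, the number of distinct traces $S_V$ that can occur is at most $\sum_{i=0}^d\binom{M}{i}$. A union bound over these distinct traces gives
\[
\P(B\mid X_1,\ldots,X_M)\ls \Bigl(\sum_{i=0}^d\binom{M}{i}\Bigr)\Bigl(1-\frac{N}{M}\Bigr)^{(M-N)\e-1},
\]
and integrating out, combined with $\P(A)\ls 2\P(B)$, produces the stated inequality. The only delicate point is the threshold bookkeeping in the median step, which is exactly what accounts for the $-1$ in the exponent; everything else is a routine combination of exchangeability, the product estimate for ratios of falling factorials, and the Sauer--Shelah lemma.
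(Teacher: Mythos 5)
The paper does not prove this statement at all---it quotes it as Theorem~3.2 of Koml\'{o}s--Pach--Woeginger---and your double-sampling argument (ghost sample, the binomial-median fact giving the factor $2$, exchangeability plus the falling-factorial ratio bound, and a Sauer--Shelah union bound with threshold $t=\lfloor (M-N)\varepsilon\rfloor > (M-N)\varepsilon-1$) is correct and is essentially the argument of the cited source, reproducing the stated bound exactly. The only caveat, which the paper's formulation shares, is the usual measurability assumption on $\mathcal{F}$ needed so that the failure event and the selection of a missed set $V_0$ are measurable; in the paper's application to families of supporting half-spaces this is harmless.
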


Choosing $\displaystyle M=\left\lfloor \frac{\varepsilon N^2}{d}\right\rfloor$ and using the fact that
$\displaystyle \sum_{i=0}^d\binom{M}{i}\ls\left(\frac{eM}{d}\right)^d$, from \eqref{eq:almost-tight} we get the following
corollary of Theorem~\ref{th:epsilon-net}, which appears in \cite[Lemma~3.2]{Naszodi}.

\begin{lemma}\label{lem:naszodi}Let $0<\varepsilon <1/e$, $\gamma >2$ and $d\in {\mathbb N}$. Consider
a family ${\cal F}$ of subsets of a probability space $(\Omega,\mu )$ such that ${\cal F}$ has VC-dimension at most $d$ and $\mu (V)\gr\varepsilon $ for all $V\in {\cal F}$. Then, for any $N\gr\gamma\,\frac{d}{\varepsilon }\ln \frac{1}{\varepsilon }$
we have that $N$ independent random points $X_1,\ldots ,X_N$ distributed according to $\mu $ form a transversal of ${\cal F}$ with probability greater than $1-4\big(11\gamma^2\varepsilon^{\gamma -2}\big)^d$.
\end{lemma}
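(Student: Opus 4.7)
The plan is to apply Theorem~\ref{th:epsilon-net} with the choice $M=\lfloor\varepsilon N^2/d\rfloor$ flagged in the paragraph preceding the statement, and then to estimate the two factors of \eqref{eq:almost-tight} using the standing hypothesis $N\gr \gamma d\varepsilon^{-1}\ln(1/\varepsilon)$. The very first step is to verify that $M>N$ and that $N/M$ is safely bounded away from $1$: since $\gamma>2$ and $\varepsilon<1/e$ imply $\ln(1/\varepsilon)>1$, one gets $\varepsilon N^2/d\gr \gamma N\ln(1/\varepsilon)\gr 2N$, hence $M\gr \varepsilon N^2/(2d)$ and $N/M\ls 2/(\gamma\ln(1/\varepsilon))<1$, so all quantities below are well-defined.

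For the binomial sum the hint stated in the text, $\sum_{i=0}^d\binom{M}{i}\ls(eM/d)^d\ls (e\varepsilon N^2/d^2)^d$, is enough. For the tail factor I would use $(1-x)^y\ls e^{-xy}$ to write
\[(1-N/M)^{(M-N)\varepsilon-1}\ls (1-N/M)^{-1}\exp\bigl(-N\varepsilon(1-N/M)\bigr),\]
and then expand the exponent as $-N\varepsilon+N^2\varepsilon/M$. The lower bound $M\gr \varepsilon N^2/(2d)$ gives $N^2\varepsilon/M\ls 2d$, so the tail contribution is at most $(1-N/M)^{-1}e^{2d}e^{-N\varepsilon}$.

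Combining the two estimates and substituting $t:=N\varepsilon/d$ (so that $t\gr \gamma\ln(1/\varepsilon)$), the failure probability from \eqref{eq:almost-tight} becomes at most $2(1-N/M)^{-1}\bigl(e^{3}\,t^2\varepsilon^{-1}e^{-t}\bigr)^d$. The function $t\mapsto t^2e^{-t}$ is decreasing for $t\gr 2$, and $\gamma\ln(1/\varepsilon)>2$ under our assumptions, so its supremum over admissible $N$ is attained at $t_0=\gamma\ln(1/\varepsilon)$, producing $t_0^2e^{-t_0}=\gamma^2\ln^2(1/\varepsilon)\,\varepsilon^{\gamma}$. The elementary inequality $\varepsilon\ln^2(1/\varepsilon)\ls 4/e^2$ for $\varepsilon\in(0,1)$ (attained at $\varepsilon=e^{-2}$) then delivers an inner factor of the form $\bigl(4e\,\gamma^2\varepsilon^{\gamma-2}\bigr)^d\ls \bigl(11\,\gamma^2\varepsilon^{\gamma-2}\bigr)^d$, while the prefactor $2(1-N/M)^{-1}$ is absorbed into the stated outer constant~$4$.

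The main obstacle is not the structural content but the bookkeeping of absolute constants: the three sources of slack --- $e^3$ from merging the binomial and tail estimates, $4/e^2$ from the one-variable logarithmic inequality, and $(1-N/M)^{-1}$ from the tail --- must combine so as to fit inside the stated $4$ and $11$, and the strict inequality $\gamma>2$ (rather than $\gamma\gr 1$) is what makes this bookkeeping possible. Once the change of variables $t=N\varepsilon/d$ reduces the family of bounds to a single function of $t$, the monotonicity of $t^2e^{-t}$ on $[2,\infty)$ handles the uniformity over the allowed range of $N$, and everything else is routine algebra.
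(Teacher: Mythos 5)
Your route is exactly the one the paper indicates (it only sketches the derivation, deferring to Nasz\'{o}di's Lemma~3.2): plug $M=\lfloor \varepsilon N^2/d\rfloor$ into Theorem~\ref{th:epsilon-net}, bound the binomial sum by $(eM/d)^d$, and estimate the tail with $(1-x)^y\ls e^{-xy}$. All of the analytic steps check out: $M>N$, the bound $N^2\varepsilon/M\ls 2d$, the substitution $t=N\varepsilon/d$, the monotonicity of $t^2e^{-t}$ on $[2,\infty)$, and the inequality $\varepsilon\ln^2(1/\varepsilon)\ls 4/e^2$, which yields the inner factor $4e\gamma^2\varepsilon^{\gamma-2}\ls 11\gamma^2\varepsilon^{\gamma-2}$.

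The one step that does not close as written is the final assertion that the prefactor $2(1-N/M)^{-1}$ ``is absorbed into the stated outer constant $4$''. The only control on $N/M$ you establish is $N/M\ls 2/(\gamma\ln(1/\varepsilon))$, and since $\gamma>2$ and $\ln(1/\varepsilon)>1$ are the only standing hypotheses, this quantity can be arbitrarily close to $1$ (take $\gamma=2.1$ and $\varepsilon$ just below $1/e$: your bound only gives $N/M\ls 0.95$, so $2(1-N/M)^{-1}$ could be of order $40$ as far as your estimates show). The leftover slack in the inner constant, $\big(11/(4e)\big)^d\approx (1.012)^d$, is far too small to compensate for this when $d$ is small, so the claimed absorption is not justified by what you proved. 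The gap is easily repaired: since $\varepsilon N^2/d\gr \gamma\ln(1/\varepsilon)\,N>2N$ and $2N$ is an integer, $M=\lfloor \varepsilon N^2/d\rfloor\gr 2N$, hence $N/M\ls 1/2$ and $(1-N/M)^{-1}\ls 2$, which gives the outer constant $4$ exactly; note that this uses the floor directly rather than the lossy bound $M\gr \varepsilon N^2/(2d)$ on which your prefactor estimate relies. With that one-line correction your argument is complete and coincides with the intended proof.
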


Besides Lemma~\ref{lem:naszodi} we use the next lemma which provides the exact value of the VC-dimension of the
family of all closed half-spaces that support a compact convex set in $\mathbb{R}^n$.

\begin{lemma}\label{lem:exact-vc}Let $C$ be a non-empty compact convex set in $\mathbb{R}^n$.
Consider the family $\mathcal{H}(C)$ of all half-spaces $H^+$ for which $H$ is a supporting hyperplane
of $C$ and $C\subseteq H^-$. Then, the ${\rm VC}$-dimension of $\mathcal{H}(C)$ is equal to $n$.
\end{lemma}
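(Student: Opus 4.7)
The plan is to prove that $\mathcal{H}(C)$ has VC-dimension at most $n$ and at least $n$ separately. First I would translate $C$ so that $0\in\mathrm{int}(C)$ (the lower-dimensional case reduces to this after passing to the affine hull of $C$); the VC-dimension is translation-invariant. Under this normalization every element of $\mathcal{H}(C)$ can be rewritten as $\{x:\langle x,v\rangle\gr 1\}$ with $v=u/h_C(u)\in\partial C^{\circ}$, so $\mathcal{H}(C)$ is a subfamily of the larger family $\mathcal{F}:=\{\{x:\langle x,v\rangle\gr 1\}:v\in\mathbb{R}^n\setminus\{0\}\}$, and the upper bound reduces to showing that $\mathcal{F}$ has VC-dimension at most $n$.

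For the upper bound, I would fix any $n+1$ candidate points $p_0,\ldots,p_n$ in $\mathbb{R}^n$ and pass to the dual parameter space: the set of $v$ for which $p_i\in\{x:\langle x,v\rangle\gr 1\}$ is the closed half-space $\{v:\langle p_i,v\rangle\gr 1\}$, bounded by the hyperplane $H_i:=\{v:\langle p_i,v\rangle=1\}$. The subset of $\{p_0,\ldots,p_n\}$ cut out by a particular $v$ is determined by the sign pattern of $(\langle p_i,v\rangle-1)_i$, which is constant on every open cell of the arrangement $\{H_0,\ldots,H_n\}$ in $\mathbb{R}^n$. A standard arrangement-counting bound caps the number of open cells by $\sum_{k=0}^{n}\binom{n+1}{k}=2^{n+1}-1<2^{n+1}$, so at least one subset is not realized and no set of $n+1$ points can be shattered.

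For the lower bound I would exhibit $n$ shattered points. Put $M:=\max_{u\in S^{n-1}}h_C(u)$ and fix $R>\sqrt{n}\,M$; set $p_i:=Re_i$ for $i=1,\ldots,n$. Given any $S\subseteq\{1,\ldots,n\}$, take the unit vector $u_S:=\tfrac{1}{\sqrt{n}}\sum_{i=1}^n\epsilon_i e_i$ with $\epsilon_i=+1$ if $i\in S$ and $\epsilon_i=-1$ otherwise. Then $\langle p_i,u_S\rangle=R\epsilon_i/\sqrt{n}$ exceeds $h_C(u_S)$ exactly when $i\in S$ and is negative otherwise (hence strictly less than $h_C(u_S)>0$), so the supporting half-space $H^+(h_C(u_S),u_S)\in\mathcal{H}(C)$ cuts out exactly $\{p_i:i\in S\}$, and all $2^n$ subsets are realized.

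The main obstacle I anticipate is the boundary convention in the upper bound: the family $\mathcal{F}$ uses closed half-spaces, so on a face of positive codimension some $\langle p_j,v\rangle$ equal $1$ exactly, which would formally assign index $j$ to the ``$+$'' side. To keep the count at $2^{n+1}-1$ one has to verify that the sign pattern at any such face coincides with that of the adjacent open cell on the $\langle p_j,\cdot\rangle\gr 1$ side, so that no extra subset appears beyond those already counted among the open cells. The reduction of the degenerate case (where $C$ is lower-dimensional) via passage to $\mathrm{aff}(C)$ is a further minor technical point.
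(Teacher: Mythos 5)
Your lower bound is essentially the paper's own argument (points $Re_i$ versus sign vectors $u_S$), and it is fine; note only that it does not need $h_C(u_S)>0$ but merely $h_C(u_S)\gr 0$, which holds as soon as $0\in C$ — this matters because the normalization $0\in\mathrm{int}(C)$ is not always available. Your upper bound takes a genuinely different route from the paper: you dualize to the arrangement of the hyperplanes $\{v:\langle p_i,v\rangle=1\}$ and count cells, whereas the paper runs a Radon-type argument (an affine dependence among $x_1,\dots,x_{n+1}$ and an arbitrary $x\in C$ forces $C\subseteq\mathrm{conv}(A)$, which contradicts shattering). For full-dimensional $C$ your duality argument is correct: the cell count $\sum_{k=0}^{n}\binom{n+1}{k}=2^{n+1}-1$ is right, and the boundary issue you flag does resolve as you predict — if $\langle p_j,v_0\rangle=1$ for all $j$ in some set $Z$, then $0\notin\mathrm{conv}\{p_j:j\in Z\}$ (evaluate at $v_0$), so there is $w$ with $\langle p_j,w\rangle>0$ on $Z$, and $v_0+\epsilon w$ lies in an open cell realizing the same subset.

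The genuine gap is the degenerate case where $C$ is lower-dimensional, and the proposed fix (``passing to the affine hull of $C$'') does not work. First, the points to be shattered live in $\mathbb{R}^n$, not in $\mathrm{aff}(C)$, so restricting to $\mathrm{aff}(C)$ changes the problem. Second, and more importantly, when $\dim\mathrm{aff}(C)<n$ you cannot translate $C$ so that $h_C(u)>0$ for every $u\in S^{n-1}$: for any choice of origin in $C$ the directions $u$ orthogonal to the linear span of $C$ give supporting half-spaces $H^+(0,u)$ through the origin, which cannot be written as $\{x:\langle x,v\rangle\gr 1\}$. Hence $\mathcal{H}(C)\not\subseteq\mathcal{F}$ and your cell-count bound does not apply; $\mathcal{H}(C)$ is now a union of $\mathcal{F}$-type half-spaces and homogeneous ones, and the VC dimension of a union of two families is not controlled by the maximum of the two, so a separate argument is needed. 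This case cannot be dismissed: the paper invokes the lemma for $C=\{x\}$ a single point (in the proof of Theorem~\ref{th:N-varphi}), and more generally $T_p(\mu)$ need not have interior. The paper's affine-dependence argument handles all dimensions of $C$ uniformly, which is what it buys over the duality approach; to salvage yours you would need an additional argument bounding the number of subsets realized jointly by the affine arrangement $\{\langle p_i,v\rangle=1\}$ and the central arrangement $\{\langle p_i,u\rangle=0\}$ by $2^{n+1}-1$.
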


\begin{proof}First we show that ${\rm VC}(\mathcal{H}(C))\gr n$. We may assume that $0\in C$ and set
$m=\max\{h_C(u):u\in S^{n-1}\}$. Note that $m\gr 0$. Then, we define $r=(1+m)\sqrt{n}$ and consider
the vectors $y_i=re_i$, $1\ls i\ls n$, where $\{e_1,\ldots ,e_n\}$ is the standard orthonormal
basis of $\mathbb{R}^n$. For every $\sigma=(\sigma_1,\ldots ,\sigma_n)\in\{-1,1\}^n$ we consider the vector
$u_{\sigma}=\frac{1}{\sqrt{n}}\sigma\in S^{n-1}$. Then, for every $1\ls i\ls n$ we have
$$\langle y_i,u_{\sigma}\rangle =\frac{1}{\sqrt{n}}r\sigma_i=(1+m)\sigma_i,$$
and this implies that
$$\langle y_i,u_{\sigma}\rangle >h_C(u_{\sigma})\;\;\hbox{if}\;\sigma_i=1$$
while
$$\langle y_i,u_{\sigma}\rangle <0\ls h_C(u_{\sigma})\;\;\hbox{if}\;\sigma_i=-1.$$
It follows that $\mathcal{H}(C)$ shatters the set $\{y_1,\ldots ,y_n\}$.

It remains to show that ${\rm VC}(\mathcal{H}(C))\ls n$. We assume that there exist distinct
$x_1,\ldots ,x_{n+1}\in\mathbb{R}^n$ such that $\mathcal{H}(C)$ shatters the set $A=\{x_1,\ldots ,x_{n+1}\}$.
Consider an arbitrary point $x\in C$. Then, $x_1,\ldots ,x_{n+1},x$ are affinely dependent, and hence there
exist $t_1,\ldots ,t_{n+1},t\in\mathbb{R}$, not all of them equal to $0$, such that
$$t+\sum_{i=1}^{n+1}t_i=0\quad\hbox{and}\quad tx+\sum_{i=1}^{n+1}t_ix_i=0.$$
Without loss of generality we assume that $t\gr 0$. By the definition, the elements of $\mathcal{H}(C)$
are the half-spaces of the form
$$H^+(h_C(u),u)=\{y\in\mathbb{R}^n:\langle y,u\rangle \gr h_C(u)\},\quad u\in S^{n-1}.$$
For any $1\ls i\ls n+1$ and $u\in S^{n-1}$ we define
$$g_i(u)=\langle x_i,u\rangle -h_C(u).$$
Therefore, $x_i\in H^+(h_C(u),u)$ if and only if $g_i(u)\gr 0$. Note that, for any $u\in S^{n-1}$,
\begin{equation}\label{eq:exact-1}t(\langle x,u\rangle -h_C(u))+\sum_{i=1}^{n+1}t_ig_i(u)
=\Big\langle tx+\sum_{i=1}^{n+1}t_ix_i,u\Big\rangle -\left(t+\sum_{i=1}^{n+1}t_i\right)\,h_C(u)=0.\end{equation}
Assume that there exists $1\ls s\ls n+1$ such that $t_s>0$. We define $I=\{1\ls i\ls n+1:t_i>0\}$
and $J=\{1,\ldots ,n+1\}\setminus I$. Since $\mathcal{H}(C)$ shatters $A$, we
may find $u\in S^{n-1}$ such that $g_i(u)<0$ for all $i\in I$ and $g_i(u)\gr 0$ for all $i\in J$.
Then, combining the fact that $t(\langle x,u\rangle -h_C(u))\ls 0$ (because $t\gr 0$ and $x\in C$) with the fact
that $t_sg_s(u)<0$ and $t_ig_i(u)\ls 0$ for all other $1\ls i\ls n+1$, we see that
$$t(\langle x,u\rangle -h_C(u))+\sum_{i=1}^{n+1}t_ig_i(u)<0,$$
which is a contradiction by \eqref{eq:exact-1}.

This means that $t_i\ls 0$ for all $1\ls i\ls n+1$, and hence $t=-\sum_{i=1}^{n+1}t_i>0$. Then,
$$x=\sum_{i=1}^{n+1}\left(-\frac{t_i}{t}\right)x_i$$
is a convex combination of $x_1,\ldots ,x_{n+1}$. Since $x\in C$ was arbitrary, we conclude that
$$C\subseteq {\rm conv}(A).$$
This leads to a contradiction. Using again the fact that $\mathcal{H}(C)$ shatters $A$, we can find $H^+\in\mathcal{H}(C)$
such that $A\cap H^+=\varnothing$. But this implies that $C\cap H^+=\varnothing$, which cannot be true
because the boundary of $H^+$ is a supporting hyperplane of $C$.
\end{proof}

\begin{proof}[Proof of Theorem~$\ref{th:deterministic}$]Let $p\gr p(\mu)$ and consider the convex body $T_p(\mu)$.
Let ${\cal F}_p(\mu)$ be the family of all closed half-spaces $H^+(\alpha,u)$
with the property that $H(\alpha,u)$ is a supporting hyperplane of $T_p(\mu)$ and $T_p(\mu)\subseteq H^-(\alpha,u)$.
From Lemma~\ref{lem:exact-vc} we know that the VC-dimension of ${\cal F}_p(\mu)$ is equal to $n$.

Note that $\mu(H^+(\alpha,u))\gr e^{-p}$ for all $H^+(\alpha,u)\in {\cal F}_p(\mu)$. Let $N\gr c_1(\beta)n$, where
$c_1(\beta)$ is a positive constant, depending only on $\beta$, to be determined.
We set $p=\beta\ln\left(\frac{N}{n}\right)$ and define $\gamma$ by the equation
$p\gamma =\left(\frac{N}{n}\right)^{1-\beta}$. Note that with this choice of $p$ and $\gamma$ we have that
$$\gamma\,\frac{n}{e^{-p}}\ln \frac{1}{e^{-p}}=p\gamma \,e^pn=\left(\frac{N}{n}\right)^{1-\beta}\left(\frac{N}{n}\right)^{\beta}n=N.$$
We need to guarantee that
$$\gamma=\left(\frac{N}{n}\right)^{1-\beta}\big/\,\beta\ln\left(\frac{N}{n}\right)>2,$$
which is certainly true if $N\gr c_1(\beta)n$ for a large enough positive constant $c_1(\beta)$ depending only
on $\beta$, because $\lim\limits_{y\to +\infty}y^{1-\beta}/(\beta\ln y)=+\infty $.
We first claim that
\begin{equation}\label{eq:epsilon-1}44\gamma^2e^{2p}<e^{p\gamma/2}.\end{equation}
By the definition of $p$ and $\gamma$, \eqref{eq:epsilon-1} is equivalent to the inequality
$$44\left(\frac{N}{n}\right)^{2-2\beta}\frac{1}{\beta^2\ln^2\left(\frac{N}{n}\right)}\left(\frac{N}{n}\right)^{2\beta}<
\exp\left(\frac{1}{2}\left(\frac{N}{n}\right)^{1-\beta}\right),$$
and setting $y=\left(\frac{N}{n}\right)^{1-\beta}$ we see that the last inequality takes the form
\begin{equation}\label{eq:epsilon-2}\frac{44(1-\beta)^2}{\beta^2}\,
\frac{y^{\frac{2}{1-\beta}}}{(\ln y)^2}<e^{y/2},\end{equation}
which is clearly true if $y\gr c(\beta)$, or equivalently, $N\gr c_2(\beta)n$ for a suitable constant
$c_2(\beta)>0$. Now, from \eqref{eq:epsilon-1} we get
$$4\big(11\gamma^2e^{-p(\gamma -2)}\big)^{n} \ls (44\gamma^2e^{2p}e^{-p\gamma})^{n}
\ls e^{-p\gamma n/2}$$
and
\begin{equation}\label{eq:epsilon-3}e^{-p\gamma n/2}\ls e^{-c_1N^{1-\beta} n^{\beta}}\end{equation}
if $\frac{1}{2} p\gamma=\frac{1}{2}\left(\frac{N}{n}\right)^{1-\beta}\gr c_1\left(\frac{N}{n}\right)^{1-\beta}$,
which is certainly true if $0<c_1\ls\frac{1}{2}$.

Then, Lemma~\ref{lem:naszodi} shows that if $N\gr c_1(\beta)n$ and $X_1,\ldots ,X_N$ are independent random points
distributed according to $\mu $ then the set $B=\{X_1,\ldots ,X_N\}$ forms a transversal of ${\cal F}_{\beta\ln\left(\frac{N}{n}\right)}(\mu)$
with probability greater than $1-\exp(-\tfrac{1}{2}N^{1-\beta}n^{\beta})$.

Finally, note  that if $B=\{X_1,\ldots ,X_N\}$ is a transversal of ${\cal F}_{\beta\ln\left(\frac{N}{n}\right)}(\mu)$ then
$$T_{\beta\ln\left(\frac{N}{n}\right)}(\mu)\subseteq {\rm conv}(B)=K_N.$$
To see this, assume that there exists $x\in {\rm bd}(T_{\beta\ln\left(\frac{N}{n}\right)}(\mu))\setminus {\rm conv}(B)$.
Then, we may find a hyperplane $H(\beta ,u)$ such that ${\rm conv}(B)\subseteq H^-(\beta,u)$ and $\langle x,u\rangle =\alpha >\beta$. We set $\alpha_0= \max \{ \langle u, z \rangle : z \in T_{\beta\ln\left(\frac{N}{n}\right)}(\mu) \}$, which is attained at some point
$y \in {\rm bd}(T_{\beta\ln\left(\frac{N}{n}\right)}(\mu))$. It is clear that $H^+(\alpha_0,u) \in {\cal F}_{\beta\ln\left(\frac{N}{n}\right)}(\mu)$ and $H^+(\alpha_0,u) \cap B =\emptyset$, since $\alpha_0\gr \alpha >\beta$. This leads to a contradiction.
\end{proof}

Next, we describe the relation between $U_p(\mu)$ and $T_p(\mu)$.
A similar result, when $\mu$ is assumed symmetric, appears in \cite{HLO-2023}. In what follows, for every $p>0$ we also define
$$V_p(\mu)=\{y\in \mathbb{R}^n:\;\mu(\{x\in\mathbb{R}^n:\langle x,y\rangle\gr 1\}) < \exp(-p)\}$$
and
$$S_p(\mu)=\{x\in\mathbb{R}^n:\varphi_{\mu}(x)>e^{-p}\}.$$

Note that $U_p(\mu)$ and $V_p(\mu)$ are star-shaped at the  origin for every $p>0$.

\begin{proposition}\label{prop:U-T}Let $\mu$ be a Borel probability measure on $\mathbb{R}^n$. If $0$ is a center point for $\mu$ then for every $p\gr p(\mu)$ we have that
\begin{equation}\label{eq:U-T}S_p(\mu)\subseteq (U_p(\mu))^{\circ}\subseteq T_p(\mu).\end{equation}
\end{proposition}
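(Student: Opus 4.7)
The plan is to prove the two inclusions separately, paying careful attention to the strict versus non-strict inequalities that appear in the definitions of $S_p(\mu)$, $T_p(\mu)$ and $U_p(\mu)$.

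For the first inclusion $S_p(\mu)\subseteq (U_p(\mu))^\circ$, I will argue directly: let $x\in S_p(\mu)$, so that $\varphi_\mu(x)>e^{-p}$, and fix $y\in U_p(\mu)$. If $\langle x,y\rangle>1$, then the closed half-space $H^+:=\{z\in\mathbb{R}^n:\langle z,y\rangle\gr 1\}$ contains $x$ and hence belongs to $\mathcal{H}(x)$, giving
\begin{equation*}
\varphi_\mu(x)\ls \mu(H^+)\ls e^{-p},
\end{equation*}
a contradiction. Therefore $\langle x,y\rangle\ls 1$ for every $y\in U_p(\mu)$, which is exactly the statement $x\in(U_p(\mu))^\circ$. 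The hypothesis that $0$ is a center point is not needed here; the strictness built into the definition of $S_p(\mu)$ does all the work.

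For the harder inclusion $(U_p(\mu))^\circ\subseteq T_p(\mu)$, I will argue by contradiction. Suppose $x\in(U_p(\mu))^\circ$ satisfies $\varphi_\mu(x)<e^{-p}$. Then (after, if necessary, sliding the bounding hyperplane so that it passes through $x$) there exists $u\in S^{n-1}$ such that, writing $\alpha:=\langle x,u\rangle$, the half-space $H^+(\alpha,u)=\{z:\langle z,u\rangle\gr\alpha\}$ satisfies $\mu(H^+(\alpha,u))<e^{-p}$. This is where the center point hypothesis enters: if $\alpha\ls 0$ then $0\in H^+(\alpha,u)$, so $H^+(\alpha,u)\in\mathcal{H}(0)$ and hence $\mu(H^+(\alpha,u))\gr\varphi_\mu(0)=e^{-p(\mu)}\gr e^{-p}$, contradicting the strict inequality we just established. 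Therefore we necessarily have $\alpha>0$.

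The remaining step is to convert the strict inequality $\mu(H^+(\alpha,u))<e^{-p}$ into a point of $U_p(\mu)$ that witnesses failure of membership in $(U_p(\mu))^\circ$. Since the family $\{H^+(\alpha',u)\}_{\alpha'<\alpha}$ is decreasing in $\alpha'$ and has intersection $H^+(\alpha,u)$, continuity of measure from above yields
\begin{equation*}
\lim_{\alpha'\uparrow\alpha}\mu(H^+(\alpha',u))=\mu(H^+(\alpha,u))<e^{-p},
\end{equation*}
so we may pick $\alpha'\in(0,\alpha)$ with $\mu(H^+(\alpha',u))<e^{-p}$. Setting $y:=u/\alpha'$, the equality $H^+(\alpha',u)=\{z:\langle z,y\rangle\gr 1\}$ shows that $y\in U_p(\mu)$, while $\langle x,y\rangle=\alpha/\alpha'>1$ contradicts $x\in(U_p(\mu))^\circ$. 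I expect the slightly delicate point (and the only obstacle worth naming) to be this approximation step: the center point hypothesis is precisely what forces $\alpha>0$ and lets the rescaling $y=u/\alpha'$ make sense, while the continuity-of-measure argument is what closes the gap between the non-strict $\ls e^{-p}$ in the definition of $U_p(\mu)$ and the strict $<e^{-p}$ produced by the failure of $x\in T_p(\mu)$.
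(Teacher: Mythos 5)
Your proof is correct, and it is essentially the same argument as the paper's: the paper routes both inclusions through the auxiliary set $V_p(\mu)$ (defined with a strict inequality) and the identity $T_p(\mu)=(V_p(\mu))^{\circ}$, but the substantive steps coincide with yours — the center point hypothesis handles half-spaces with $\alpha\ls 0$, a small approximation (your continuity-of-measure limit, the paper's $\delta$-perturbation) bridges the strict/non-strict gap, and the rescaling $y=u/\alpha'$ produces the witnessing functional. Your observation that the inclusion $S_p(\mu)\subseteq (U_p(\mu))^{\circ}$ does not need the center point hypothesis is a correct minor refinement.
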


\begin{proof}Let $p\gr p(\mu)$. It is clear that $V_p(\mu)\subseteq U_p(\mu)$, and hence
$(U_p(\mu))^{\circ}\subseteq (V_p(\mu))^{\circ}$. We shall show that
\begin{equation}\label{eq:V-T}T_p(\mu)=(V_p(\mu))^{\circ},\end{equation}
which proves the right-hand side inclusion of \eqref{eq:U-T}.

We start with the observation that $x\in (V_p(\mu))^{\circ}$ if and only if for every $y\neq 0$ we have the implication
\begin{equation}\label{eq:U-T-1}\mu(\{z:\langle z,y\rangle\gr 1\})<e^{-p}\;\;\Longrightarrow\;\;\langle x,y\rangle\ls 1.\end{equation}
If we write $y=\frac{1}{r}\xi$ where $r>0$ and $\xi\in S^{n-1}$ then we see that \eqref{eq:U-T-1} is equivalent
to the following statement: for every $r>0$ and $\xi\in S^{n-1}$,
\begin{equation}\label{eq:U-T-2}\mu(\{z:\langle z,\xi\rangle\gr r\})<e^{-p}\;\;\Longrightarrow\;\;\langle x,\xi\rangle\ls r.\end{equation}
Since $0$ is a center point for $\mu$, we also know that, for every $r<0$ and $\xi\in S^{n-1}$,
$$\mu(\{z:\langle z,\xi\rangle\gr r\})\gr \mu(\{z:\langle z,\xi\rangle\gr 0\})\gr \varphi_{\mu}(0)
= e^{-p(\mu)}\gr e^{-p},$$
therefore the implication \eqref{eq:U-T-2} continues to hold. In other words, $x\in (V_p(\mu))^{\circ}$ if and only if
for every $r\in\mathbb{R}$ and $\xi\in S^{n-1}$ we have that
\begin{equation}\label{eq:U-T-3}\mu(\{z:\langle z,\xi\rangle\gr r\})<e^{-p}\;\;\Longrightarrow\;\;\langle x,\xi\rangle\ls r.\end{equation}
This is in turn equivalent to the next statement: for any $\xi\in S^{n-1}$,
\begin{equation}\label{eq:U-T-4}\mu(\{z:\langle z,\xi\rangle \gr \langle x,\xi\rangle \})\gr e^{-p}.\end{equation}
To see this, assume that there exists $\xi\in S^{n-1}$ such that $\mu(\{z:\langle z,\xi\rangle \gr \langle x,\xi\rangle \})< e^{-p}$.
Then, we may find $\delta>0$ so that $\mu(\{z:\langle z,\xi\rangle \gr \langle x,\xi\rangle -\delta \})<e^{-p}$,
and applying \eqref{eq:U-T-3} with $r=\langle x,\xi\rangle -\delta $ we get $\langle x,\xi\rangle \leq r$, which implies
that $\delta\ls 0$, a contradiction. Now, we readily see that \eqref{eq:U-T-4} is equivalent to
$$\varphi_{\mu}(x)=\inf_{\xi\in S^{n-1}}\mu(\{z:\langle z,\xi\rangle \gr \langle x,\xi\rangle \})\gr e^{-p},$$
and hence to $x\in T_p(\mu)$. So, we have proved \eqref{eq:V-T}.

For the left-hand side inclusion on \eqref{eq:U-T} note that if $p(\mu)\ls q<p$ then $U_p(\mu)\subseteq V_q(\mu)$,
and hence $(U_p(\mu))^{\circ}\supseteq (V_q(\mu))^{\circ}=T_q(\mu)$. It follows that
$$S_p(\mu)=\bigcup_{p(\mu)\ls q<p}T_q(\mu)\subseteq (U_p(\mu))^{\circ}$$
and the proof of the lemma is now complete.\end{proof}

\begin{remark}\label{rem:equality}\rm  If we assume that $\mu$ has a density $f_{\mu}$ and $D=\{f_{\mu}>0\}$ coincides with an open set up to a Borel null set then we can check that the sets $S_p(\mu)$ and $T_p(\mu)$ in Proposition~\ref{prop:U-T} have the same measure.
This follows from the fact that for every $t>0$ the set $A_t:=\{x\in D:\varphi_{\mu}(x)=t\}$ has  $\vol_n(A_t)=0$ and thus $\mu(A_t)=0$. To see this, assume that
$\vol_n(A_t)>0$. By Lebesgue's differentiation theorem, there exists a density point $x_0$ of $A_t$, and we
may find $r>0$ such that $\vol_n(A_t\cap B(x_0,r))\gr\frac{2}{3}\vol_n(B(x_0,r))$.
It is proved in \cite{Laketa-Nagy-2022} that $\varphi_{\mu}(x_0)$ is attained for some half-space, i.e.
there exists $\xi\in S^{n-1}$ such that
$$t=\varphi_{\mu}(x_0)=\mu(\{x\in D:\langle x,\xi\rangle \gr \langle x_0,\xi\rangle \}).$$
We set $B^-(x_0,r)=\{y\in B(x_0,r):\langle y,\xi\rangle \ls \langle x_0,\xi\rangle \}$. Then,
$$\vol_n(B^-(x_0,r))=\frac{1}{2}\vol_n(B(x_0,r))<\vol_n(A_t\cap B(x_0,r)),$$
and hence we may find $y\in A_t\cap B(x_0,r)$ such that $\langle y,\xi\rangle >\langle x_0,\xi\rangle $.
Then,
$$t=\varphi_{\mu}(x_0)=\mu(\{x\in D:\langle x,\xi\rangle \gr \langle x_0,\xi\rangle \})
>\mu(\{x\in D:\langle x,\xi\rangle \gr \langle y,\xi\rangle \}) \gr \varphi_{\mu} (y)=t,$$
which is a contradiction.
\end{remark}

We close this section with a proof of Theorem~\ref{th:N-varphi}.

\begin{proof}[Proof of Theorem~$\ref{th:N-varphi}$]
Let $x\in\mathbb{R}^n$ with $\varphi:=\varphi_{\mu}(x)>0$. We apply Theorem~\ref{th:epsilon-net} for the
family ${\cal F}=\mathcal{H}(x)$ of all closed half-spaces $H$ with $x\in H$. Lemma~\ref{lem:exact-vc}
shows that $\mathcal{H}(x)$ has VC-dimension equal to $n$, and $\mu (H)\gr\varphi $ for all $H\in \mathcal{H}(x)$. Then, if we set
$M=N+\frac{N^2}{n}>N$ we have that $N$ independent random points $X_1,\ldots ,X_N$ distributed according to $\mu $ form a transversal of $\mathcal{H}(x)$ with probability greater than $1-2p(n,N,\varphi)$, where
\begin{equation}\label{eq:almost-tight-2}p(n,N,\varphi)=\left(e\frac{nN+N^2}{n^2}\right)^{n}
\left(1-\frac{nN}{nN+N^2}\right)^{\frac{\varphi N^2}{n}}.\end{equation}
Set $y=1/\varphi $ and $N=n\,ay(\ln y)$, where $a>1$ is a constant to be chosen. If $y\gr e$, then
\eqref{eq:almost-tight-2} takes the simpler form
\begin{align*}p(n,N,\varphi)&=\left[eay(\ln y)(1+ay(\ln y))\left(1-\frac{1}{1+ay\ln y}\right)^{a^2y(\ln y)^2}\right]^{n}\\
&\ls \left[2a^2ey^2(\ln y)^2\exp\left(-a(\ln y)/2\right)\right]^{n}.\end{align*}
Now, it is clear that if we choose $a=6$ and $1/\varphi =y\gr c_1$ where $c_1>1$ is an absolute constant, then
$p(n,N,\varphi )\ls 2^{-n}$, and hence we get that with probability greater than $1/2$ the random
vectors $X_1,\ldots ,X_N$ form a transversal of $\mathcal{H}(x)$, which easily implies that $x\in {\rm conv}\{X_1,\ldots ,X_N\}$.
Since our choice of $N$ gives $N=6ny(\ln y)=\frac{6n}{\varphi}\,\ln (1/\varphi)$, by the definition of $N_{\mu}(x)$
we see that
$$N_{\mu}(x)\ls\frac{6n}{\varphi_{\mu}(x)}\,\ln\left(1/\varphi_{\mu}(x)\right)$$
if $\varphi_{\mu}(x)\ls c_1^{-1}$. The result follows with a simple computation for the case
$\varphi_{\mu}(x)\gr c_1^{-1}$.
\end{proof}

\begin{note*}Let us add here that a reverse inequality can be obtained in a simple way. If $N$ is an integer
that satisfies $\frac{1}{2N}>\varphi_{\mu}(x)$ then there exists $\xi\in S^{n-1}$ such that
$\mu(\{y:\langle y-x,\xi\rangle\ls 0\})<\frac{1}{2N}$. It follows that
$$\mathbb{P}(x\in {\rm conv}\{X_1,\ldots ,X_N\})\ls \mathbb{P}\left(\bigcup_{i=1}^N
\{\langle X_i-x,\xi\rangle\ls 0\}\right)\ls N\mathbb{P}(\langle X-x,\xi\rangle\ls 0)<\frac{1}{2}.$$
Therefore, $N_{\mu}(x)\gr\frac{1}{2\varphi_{\mu}(x)}$.

Hayakawa, Lyons and Oberhauser give examples which show that both the upper and the lower bound
in the inequality $\frac{1}{2}\ls N_{\mu}(x)\varphi_{\mu}(x)\ls 3n$ are tight up to absolute constants, even for
small values of $\varphi_{\mu}(x)$ (see \cite[Remark~4]{HLO-2023} and \cite[Example~35]{HLO-2023}, respectively).
\end{note*}

%%%%%%%%%%%%%%%%%%%%%%%%%%%%%%%%%%%%%%%%%%%%%%%%%%%%%%%%%%%%%%%%%%%%%%%%%%%%%%%%%%%%%%%%%%%%%%%%%%%%%%%%%%%%%%%%%%
\section{Regular and strongly regular measures}\label{section-4}
%%%%%%%%%%%%%%%%%%%%%%%%%%%%%%%%%%%%%%%%%%%%%%%%%%%%%%%%%%%%%%%%%%%%%%%%%%%%%%%%%%%%%%%%%%%%%%%%%%%%%%%%%%%%%%%%%%

Let $\mu$ be a Borel probability measure on $\mathbb R^n$.  In order to apply Theorem~\ref{th:deterministic} in concrete situations we need estimates for the size of the bodies $T_p(\mu)$. In this section we compare  the body $T_p(\mu)$
with the nonsymmetric $L_p$-centroid bodies $Z_p^+(\mu)$ of $\mu$, under some regularity assumptions on the measure $\mu$.
The centroid bodies are easier to handle. For example, as we will see in the next section, we can provide general lower
bounds for their volume.

For every $p\gr 1$ we consider the compact convex set $Z_p^+(\mu )$ with support function
\begin{equation*}h_{Z_p^+(\mu )}(y)=\left (\int_{{\mathbb R}^n}\langle
x,y\rangle_+^pd\mu(x)\right )^{1/p},\qquad y\in {\mathbb R}^n,\end{equation*} where $a_+=\max
\{a,0\}$, provided that $h_{Z_p^+(\mu )}$ is bounded on $S^{n-1}$.

If $A \subseteq \mathbb{R}^n$ is Borel measurable with $\vol_n(A)=1$, then for $p \gr 1$ we denote $Z_{p}^+(A) := Z_{p}^+(\mu_A),$ where $\mu_A$ is the uniform measure on $A$.

\begin{claim}\label{claim:Z-interior}Let $\mu$ be a full-dimensional Borel probability measure on $\mathbb R^n$. If $Z_{p}^+(\mu)$ is well defined for some $p \gr 1$, then it is a convex body. Moreover, if $\mu$ is centered, then $0 \in {\rm int}(Z_{p}^+(\mu))$.

\end{claim}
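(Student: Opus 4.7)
The plan is to verify three things: $Z_p^+(\mu)$ is compact and convex with $0$ in it, that it has non-empty interior, and, under the extra centering assumption, that $0$ lies in the interior. All three reductions are through the support function
\[
h(y) := h_{Z_p^+(\mu)}(y) = \|\langle \cdot, y\rangle_+\|_{L^p(\mu)}.
\]
First I would observe that $h$ is positively homogeneous of degree one by scaling out $y$, and subadditive because $(a+b)_+ \leq a_+ + b_+$ combined with Minkowski's inequality in $L^p(\mu)$ yields $h(y+z) \leq h(y)+h(z)$. Together with the standing assumption that $h$ is bounded on $S^{n-1}$, sublinearity guarantees that $Z_p^+(\mu)$ is a well-defined compact convex set. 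Since $h(y) \geq 0$ for every $y$, the origin cannot be separated from $Z_p^+(\mu)$ by a hyperplane, so $0 \in Z_p^+(\mu)$.

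Next I would show that $Z_p^+(\mu)$ has non-empty interior, arguing by contradiction. If $Z_p^+(\mu)$ were contained in some hyperplane $H(\alpha,u)$ with $u \in S^{n-1}$, then both $h(u) = \alpha$ and $h(-u) = -\alpha$. Since $h \geq 0$, this forces $\alpha = 0$ and $h(\pm u) = 0$, i.e.\ $\langle x, u\rangle_+ = 0 = \langle x, -u\rangle_+$ for $\mu$-almost every $x$. Hence $\mu$ is supported on the hyperplane $\{x : \langle x, u\rangle = 0\}$, contradicting full-dimensionality. Therefore $Z_p^+(\mu)$ is a convex body.

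For the second assertion, assume in addition that $\mu$ is centered. It suffices to show $h(y) > 0$ for every $y \in S^{n-1}$, because $h(y)$ is the distance from $0$ to the supporting hyperplane of $Z_p^+(\mu)$ in direction $y$, and positivity in all directions is equivalent to $0 \in \mathrm{int}(Z_p^+(\mu))$. Suppose towards a contradiction that $h(y) = 0$ for some $y \in S^{n-1}$. Then $\langle x,y\rangle_+ = 0$, i.e.\ $\langle x, y\rangle \leq 0$, for $\mu$-almost every $x$. Combined with the centering condition $\int \langle x,y\rangle\,d\mu(x) = 0$, this forces $\langle x,y\rangle = 0$ for $\mu$-almost every $x$, again contradicting full-dimensionality. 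I do not anticipate any real obstacle here; the only subtlety worth noting is that $\langle \cdot, y\rangle \in L^1(\mu)$ (needed so the centering integral makes sense) follows from the well-definedness of $Z_p^+(\mu)$ applied to both $y$ and $-y$, since $p \geq 1$ and $|\langle x,y\rangle| = \langle x,y\rangle_+ + \langle x,-y\rangle_+$.
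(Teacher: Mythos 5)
Your proof is correct and follows essentially the same route as the paper's: positive width $h(\xi)+h(-\xi)>0$ in every direction (via full-dimensionality) for non-empty interior, and strict positivity of $h$ on $S^{n-1}$ (via the centering identity forcing $\langle\cdot,\xi\rangle=0$ $\mu$-a.e.\ otherwise) for $0\in\mathrm{int}(Z_p^+(\mu))$. The additional remarks on sublinearity of the support function and on the $L^1$-integrability of $\langle\cdot,y\rangle$ are sound and only make explicit what the paper leaves implicit.
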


\begin{proof}[Proof of Claim~$\ref{claim:Z-interior}$] Since for every $\xi \in S^{n-1}, \, \mu(\{x:\langle x, \xi \rangle=0\})<1 $, it is easy to verify that
$$ h_{Z_{p}^+(\mu)}(\xi)+h_{Z_{p}^+(\mu)}(-\xi)>0$$
holds true for every $\xi \in S^{n-1}$ and thus $Z_{p}^+(\mu)$ has non empty interior.

Assuming now that $\rm{bar}(\mu)=0$, by the continuity of $h_{Z_{p}^+(\mu)}$, it is enough to show that $h_{Z_{p}^+(\mu)}(\xi)>0$ for every $\xi \in S^{n-1}$. Considering otherwise, if $h_{Z_{p}^+(\mu)}(\xi)=0$ for some $\xi \in S^{n-1}$, then $\mu(\{x: \langle x, \xi \rangle \ls 0\})=1$. So we must have that
$$\int_{\mathbb{R}^n} \langle x, \xi \rangle d\mu(x) \ls 0.$$
But the above inequality is an equality, since $\rm{bar}(\mu)=0$. This means that $\mu(\{x:\langle x,\xi \rangle=0\})=1$, which is a contradiction.
\end{proof}

\begin{definition}\label{def:Z}\rm Let $\mu$ be a Borel probability measure on $\mathbb{R}^n$. We say that
$\mu$ is {\it $\alpha $-regular} if $Z_1^+(\mu)$ is a compact convex set and
$$\|\langle \cdot,y\rangle_+\|_{L^{2p}(\mu)}
\ls 2\alpha \,\|\langle \cdot ,y\rangle_+\|_{L^{p}(\mu)}$$
for every $y\in\mathbb{R}^n$ and any $p\gr 1$. Equivalently, if $Z_{2p}^+(\mu)\subseteq 2\alpha \,Z_p^+(\mu)$
for every $p\gr 1$. We also say that $\mu$ is {\it $\alpha $-strongly regular} if $Z_1^+(\mu)$ is a convex body and
$$\|\langle \cdot ,y\rangle_+\|_{L^{q}(\mu)}
\ls \frac{\alpha q}{p}\,\|\langle \cdot ,y\rangle_+\|_{L^{p}(\mu)}$$
for every $y\in\mathbb{R}^n$ and any $q>p\gr 1$. Equivalently, if $Z_q^+(\mu)\subseteq \frac{\alpha q}{p}\,Z_p^+(\mu)$
for every $q>p\gr 1$. It is clear that every $\alpha $-strongly regular
Borel probability measure is $\alpha $-regular.

Every centered log-concave probability measure is $C$-strongly regular,
and hence $C$-regular, where $C>0$ is an absolute constant. Indeed, one can check that if $1\ls p<q$ then
\begin{equation}\label{eq:regularity}\left(\frac{4}{e}\right)^{\frac{1}{p}-\frac{1}{q}}Z_p^+(\mu )\subseteq Z_q^+(\mu )
\subseteq c_1\left(\frac{4(e-1)}{e}\right)^{\frac{1}{p}-\frac{1}{q}}\frac{q}{p}Z_p^+(\mu ).\end{equation}
For a proof see \cite{Guedon-EMilman-2011}.
\end{definition}

\begin{proposition}\label{prop:measure-1}Let $\mu$ be an $\alpha$-regular Borel probability measure on $\mathbb{R}^n$.
Then, for every $p\gr 1$ we have that
$$Z_p^+(\mu)\subseteq 2T_{g(p)}(\mu)$$
where $g(p)=\max\{2\ln(2e\alpha )p,\ln(1/\varphi_{\mu}(0))\}$.
\end{proposition}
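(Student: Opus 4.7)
The plan is to prove that for every $x\in Z_p^+(\mu)$ one has $x/2\in T_{g(p)}(\mu)$; equivalently, that every closed half-space $H^+_\xi=\{z:\langle z,\xi\rangle\gr t\}$ containing $x/2$ on its boundary, with $\xi\in S^{n-1}$ and $t=\langle x/2,\xi\rangle$, satisfies $\mu(H^+_\xi)\gr e^{-g(p)}$. I would split according to the sign of $t$. The case $t\ls 0$ is immediate: $H^+_\xi$ contains $\{z:\langle z,\xi\rangle\gr 0\}$, whose $\mu$-measure is at least $\varphi_{\mu}(0)$, and the definition of $g$ is arranged precisely so that $e^{-g(p)}\ls \varphi_{\mu}(0)$.

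For the substantive case $t>0$, set $f(z):=\langle z,\xi\rangle_+$. Since $x\in Z_p^+(\mu)$, the identity $\|f\|_{L^p(\mu)}=h_{Z_p^+(\mu)}(\xi)\gr\langle x,\xi\rangle=2t$ reduces the task to lower-bounding $\mu(\{f\gr \tfrac{1}{2}\|f\|_{L^p(\mu)}\})$. This is a direct Paley--Zygmund computation: writing $\int f^p\,d\mu$ as the sum of integrals over $\{f<\tfrac{1}{2}\|f\|_{L^p(\mu)}\}$ and its complement, dominating the first by $2^{-p}\|f\|_{L^p(\mu)}^p$, and applying the Cauchy--Schwarz inequality to the second (with exponents $2,2$), one obtains
\begin{equation*}
\mu\left(\left\{f\gr \tfrac{1}{2}\|f\|_{L^p(\mu)}\right\}\right)\gr \left(\frac{(1-2^{-p})\,\|f\|_{L^p(\mu)}^p}{\|f\|_{L^{2p}(\mu)}^p}\right)^{2}.
\end{equation*}
The $\alpha$-regularity hypothesis furnishes $\|f\|_{L^{2p}(\mu)}\ls 2\alpha\,\|f\|_{L^p(\mu)}$, and a routine check shows $(1-2^{-p})^2\gr e^{-2p}$ for every $p\gr 1$ (the worst case being $p=1$, where $1/4>e^{-2}$). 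Combining these bounds yields $\mu(\{f\gr \tfrac{1}{2}\|f\|_{L^p(\mu)}\})\gr (2e\alpha)^{-2p}=e^{-2\ln(2e\alpha)p}\gr e^{-g(p)}$, which closes the argument.

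I do not foresee a genuine obstacle; the whole argument is one application of Paley--Zygmund combined with a single use of the regularity inequality. The factor $2$ in the conclusion $Z_p^+(\mu)\subseteq 2T_{g(p)}(\mu)$ reflects the choice of threshold $\tfrac{1}{2}\|f\|_{L^p(\mu)}$, while the factor $e$ appearing in $g(p)$ absorbs the constant $(1-2^{-p})^2$ from the Paley--Zygmund estimate. Using a moment exponent higher than $2p$ would force one to iterate the regularity bound and would worsen rather than improve the resulting exponential rate, so $q=2p$ is essentially the right choice here.
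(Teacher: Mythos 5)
Your proposal is correct and follows essentially the same route as the paper: the same split according to the sign of $\langle x,\xi\rangle$, the same Paley--Zygmund estimate applied to $\langle\cdot,\xi\rangle_+^p$ at threshold $2^{-p}$ times its mean, and the same single use of $\alpha$-regularity to control $\|f\|_{L^{2p}}$ by $\|f\|_{L^p}$. The only cosmetic difference is how the factor $(1-2^{-p})^2$ is absorbed into the exponent (you use $(1-2^{-p})^2\gr e^{-2p}$, the paper uses $(1-2^{-p})^2\gr\tfrac14$ and then $2\ls 2p$), which yields the identical constant $2\ln(2e\alpha)$.
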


\begin{proof}Let $x\in \tfrac{1}{2}Z_p^+(\mu )$. For any $\xi\in S^{n-1}$ with $\langle x,\xi\rangle\gr 0$
we have $\langle x,\xi\rangle\ls \tfrac{1}{2}h_{Z_p^+(\mu )}(\xi )$ and hence
$$\mu (\{z\in {\mathbb R}^n:\langle z,\xi\rangle \gr\langle x,\xi\})\gr\mu(\{z\in {\mathbb R}^n:\langle z,\xi\rangle \gr \tfrac{1}{2} h_{Z_p^+(\mu )}(\xi)\}).$$
We apply the Paley-Zygmund inequality
\begin{equation*}\mu (\{z:h(z)\gr 2^{-p}{\mathbb E}_{\mu }(h)\})\gr
(1-2^{-p})^2\frac{\big({\mathbb E}_{\mu }(h)\big)^2}{{\mathbb E}_{\mu }(h^2)}\gr
\frac{1}{4}\,\frac{\big({\mathbb E}_{\mu }(h)\big)^2}{{\mathbb E}_{\mu }(h^2)}
\end{equation*} for the function $h(z)=\langle z,\xi\rangle_+^p$. Since $\mu$ is $\alpha $-regular, we see that
\begin{equation*}{\mathbb E}_{\mu }(h^2)\ls (2\alpha )^{2p}\,\big({\mathbb E}_{\mu }(h)\big)^2.\end{equation*}
Therefore,
$$\mu (\{z\in {\mathbb R}^n:\langle z,\xi\rangle \gr\langle x,\xi\})\gr
\frac{1}{4}e^{-2\ln (2\alpha )p}\gr e^{-2\ln (2\alpha )p-2}\gr e^{-2\ln(2e\alpha)p}.$$
On the other hand, if $\langle x,\xi\rangle <0$ then it is clear that
$$\mu (\{z\in {\mathbb R}^n:\langle z,\xi\rangle \gr\langle x,\xi\})\gr \varphi_{\mu}(0)= e^{-\ln (1/\varphi_{\mu}(0))}.$$
This shows that
$$\varphi_{\mu}(x)=\inf_{\xi\in S^{n-1}}\mu (\{z\in {\mathbb R}^n:\langle z,\xi\rangle \gr\langle x,\xi\})
\gr \exp\Big(-\max\{2\ln(2e\alpha )p,\ln(1/\varphi_{\mu}(0))\}\Big)$$
and the proposition follows. \end{proof}

For an $\alpha$-strongly regular measure $\mu$ we can also establish the equivalence of the family $\{T_p(\mu)\}_{p\gr p(\mu)}$
with the family $\{B_p(\mu)\}_{p>0}$ of the level sets of the Cram\'{e}r transform of $\mu$.
Recall that if $\mu$ is a Borel probability measure on $\mathbb R^n$ then the log-Laplace transform of $\mu$ is defined by
\begin{equation*}\Lambda_{\mu }(\xi )=\ln\left(\int_{{\mathbb R}^n}e^{\langle\xi
,z\rangle }d\mu(z)\right)\end{equation*}
and the Cram\'{e}r transform $\Lambda_{\mu}^{\ast }$ of $\mu $ is the Legendre transform of $\Lambda_{\mu}$, defined by
\begin{equation*}\Lambda_{\mu }^{\ast }(x)= \sup_{\xi\in {\mathbb R}^n} \left\{ \langle x, \xi\rangle - \Lambda_{\mu }(\xi )\right\}.\end{equation*}
Note that $\Lambda_{\mu}^{\ast}$ is a non-negative convex function. For any $p>0$ we define
$$B_p(\mu)=\{x\in\mathbb{R}^n:\Lambda_{\mu}^{\ast}(x)\ls p\}.$$
From the inequality $\varphi_{\mu } (x)\ls\exp (-\Lambda_{\mu }^{\ast }(x))$ which is a direct
consequence of the definitions (see e.g. \cite[Lemma~3.1]{BGP-threshold}) we immediately get the next
lemma.

\begin{lemma}\label{lem:standard}Let $\mu$ be a Borel probability measure on $\mathbb R^n$.
For every $p>0$ we have that $T_p(\mu)\subseteq B_p(\mu)$.\end{lemma}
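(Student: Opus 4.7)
The plan is to deduce the inclusion directly from the pointwise Chernoff-type bound $\varphi_{\mu}(x)\ls \exp(-\Lambda_{\mu}^{\ast}(x))$ that is cited immediately before the statement. Once that bound is available, the lemma is a one-line observation: for any $x\in T_p(\mu)$, the definitions give $e^{-p}\ls\varphi_{\mu}(x)\ls e^{-\Lambda_{\mu}^{\ast}(x)}$, hence $\Lambda_{\mu}^{\ast}(x)\ls p$, i.e.\ $x\in B_p(\mu)$. So the real task is only to verify the pointwise bound, which I sketch next.

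To justify the bound I would fix $x\in\mathbb{R}^n$ and, for each $\xi\in\mathbb{R}^n\setminus\{0\}$, consider the closed half-space
$$H_{\xi}^{+}=\{z\in\mathbb{R}^n:\langle z,\xi\rangle\gr\langle x,\xi\rangle\},$$
which has $x$ on its boundary and therefore lies in $\mathcal{H}(x)$. Consequently $\varphi_{\mu}(x)\ls \mu(H_{\xi}^{+})$. An exponential Markov inequality applied to $z\mapsto e^{\langle z,\xi\rangle}$ gives
$$\mu(H_{\xi}^{+})=\mu\bigl(\{z:e^{\langle z,\xi\rangle}\gr e^{\langle x,\xi\rangle}\}\bigr)\ls e^{-\langle x,\xi\rangle}\int_{\mathbb{R}^n}e^{\langle z,\xi\rangle}\,d\mu(z)=\exp\bigl(-(\langle x,\xi\rangle-\Lambda_{\mu}(\xi))\bigr),$$
where for $\xi$ with $\Lambda_{\mu}(\xi)=+\infty$ the bound is vacuous and therefore harmless.

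Taking the infimum over $\xi$ on both sides converts the right-hand side into the supremum in the definition of the Legendre transform, yielding $\varphi_{\mu}(x)\ls\exp(-\Lambda_{\mu}^{\ast}(x))$, and the lemma then follows by the opening observation. I do not anticipate any genuine obstacle: the whole argument is a short Chernoff bound combined with the definitions of $T_p(\mu)$ and $B_p(\mu)$, which is presumably why the authors treat the step as essentially immediate and dispatch it by citation.
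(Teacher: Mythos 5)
Your proposal is correct and follows exactly the route the paper takes: the lemma is deduced in one line from the pointwise bound $\varphi_{\mu}(x)\ls\exp(-\Lambda_{\mu}^{\ast}(x))$, which the paper merely cites (as Lemma~3.1 of \cite{BGP-threshold}) and which you verify by the standard exponential Markov (Chernoff) argument over the half-spaces $\{z:\langle z,\xi\rangle\gr\langle x,\xi\rangle\}$. Your handling of the degenerate cases ($\xi=0$ and $\Lambda_{\mu}(\xi)=+\infty$) is fine, so there is nothing to add.
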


If $\mu$ is $\alpha $-strongly regular then the next proposition establishes a reverse inclusion between the bodies
$B_p(\mu )$ and $Z_p^+(\mu )$ (a variant of this result appears in \cite[Proposition~3.5]{Latala-Wojtaszczyk-2008}).

\begin{proposition}\label{prop:B<Z}Let $\mu $ be an $\alpha$-strongly regular probability measure on $\mathbb{R}^n$.
Then, for every $p\gr 1$ and any $\delta \in (0,1]$ we have that
\begin{equation*}B_p(\mu) \subseteq (1+\delta)Z_{c_1\alpha p/\delta}^+(\mu)\end{equation*}
where $c_1>0$ is an absolute constant.
\end{proposition}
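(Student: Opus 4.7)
The aim is to show that for every $x\in B_p(\mu)$ and every $y\in\mathbb{R}^n$ one has $\langle x,y\rangle\ls(1+\delta)h$, where $h:=h_{Z_q^+(\mu)}(y)=\|\langle\cdot,y\rangle_+\|_{L^q(\mu)}$ with $q:=c_1\alpha p/\delta$. The definition $\Lambda_\mu^*(x)\ls p$ rearranges to $\langle x,\xi\rangle\ls p+\Lambda_\mu(\xi)$ for every $\xi$; substituting $\xi=ty$ and dividing by $t>0$ gives
$$\langle x,y\rangle\ls\inf_{t>0}\frac{p+\Lambda_\mu(ty)}{t}.$$
It therefore suffices to exhibit a single $t>0$ with $\Lambda_\mu(ty)\ls t(1+\delta)h-p$.

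To bound $\Lambda_\mu(ty)$ I would expand the exponential, using $\langle z,y\rangle\ls\langle z,y\rangle_+$, and integrate term by term:
$$\int e^{t\langle z,y\rangle}\,d\mu(z)\ls\sum_{k=0}^{\infty}\frac{t^k}{k!}\int\langle z,y\rangle_+^k\,d\mu(z).$$
For $k\ls q$, the monotonicity of $L^r$-norms on a probability space gives $\int\langle z,y\rangle_+^k\,d\mu\ls h^k$, and these terms sum to at most $e^{th}$. For $k>q$ the $\alpha$-strong regularity of $\mu$, applied to the pair $(q,k)$, gives $\int\langle z,y\rangle_+^k\,d\mu\ls(\alpha kh/q)^k$, and Stirling's bound $k!\gr(k/e)^k$ reduces this to $b^k$, where $b:=\alpha eht/q$. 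When $b<1$, the tail is a geometric series bounded by $b^q/(1-b)$, so
$$\Lambda_\mu(ty)\ls\ln\bigl(e^{th}+b^q/(1-b)\bigr)=th+\ln\bigl(1+e^{-th}b^q/(1-b)\bigr).$$

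The remaining step is calibration. I would set $t:=(p+1)/(h\delta)$, which gives $th=(p+1)/\delta$ and $\delta th=p+1$, and choose $q:=2e\alpha(p+1)/\delta$, so that $b=1/2$. Then the residual $e^{-th}b^q/(1-b)\ls 2^{-q+1}\ls 1$, hence $\ln(1+\cdots)\ls\ln 2\ls 1=\delta th-p$, which rearranges to $(p+\Lambda_\mu(ty))/t\ls(1+\delta)h$. Since $p\gr 1$ one has $q\ls 4e\alpha p/\delta$, yielding the inclusion with $c_1=4e$.

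The main obstacle is obtaining the correct $1/\delta$ dependence in $q$, rather than the $1/\delta^2$ one would get from a cruder single choice of $t$ that tries to balance both sums at once. The point is to decouple: take $t$ proportional to $1/\delta$ so that the bulk term $e^{th}$ already matches the target $e^{t(1+\delta)h-p}$ up to the fixed margin $\delta th-p=1$, and then choose $q$ independently large enough to drive $b$ down to a fixed constant below $1$, which makes the tail negligible irrespective of $\delta$.
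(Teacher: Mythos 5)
Your proposal is correct and follows essentially the same route as the paper: expand $e^{t\langle z,y\rangle_+}$ as a series, bound the terms with $k\ls q$ by H\"older and the terms with $k>q$ by $\alpha$-strong regularity plus $k!\gr (k/e)^k$, and sum the tail geometrically, which is exactly the paper's estimate $\Lambda_\mu(\gamma q y)\ls\gamma q+1$ for $y$ in the polar set $W_q^+(\mu)$. The only (cosmetic) difference is that you phrase the duality step via the support-function characterization with a $t$ adapted to $h=h_{Z_q^+(\mu)}(y)$ (the degenerate case $h=0$ being trivial by letting $t\to\infty$), whereas the paper argues contrapositively with the fixed scaling $\gamma q y$ for $y\in W_q^+(\mu)$.
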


\begin{proof}Let $\delta \in (0,1]$, $p\gr 1$ and $q\gr 1$ that will be suitably chosen
(depending on $p$). For the proof it is convenient to define the set
\begin{equation*}W^+_p(\mu) := \left\{y\in {\mathbb R}^n : \int_{{\mathbb R}^n} \langle y, x\rangle_+^pd\mu(x)\ls 1\right\}\end{equation*}
for every $p\gr 1$. Note that
\begin{equation*}Z^+_p(\mu) = (W^+_p(\mu))^{\circ}.\end{equation*}
If $y\in W^+_q(\mu)$ then H\"{o}lder's inequality shows that
$\|\langle y,\cdot\rangle_+\|_k\ls \|\langle y,\cdot\rangle_+\|_q\ls 1$ for all $k\ls q$, and the
strong $\alpha $-regularity of $\mu$ implies that
$$\|\langle y,\cdot\rangle_+\|_k\ls \frac{\alpha k}{q}\|\langle y,\cdot\rangle_+\|_q\ls \frac{\alpha k}{q}$$
for all $k>q$. Since $\frac{k}{(k!)^{1/k}}\to e$, we may choose a constant $\gamma\approx 1/\alpha $ so
that $\frac{\alpha \gamma k}{(k!)^{1/k}}\ls\frac{1}{2}$ for all $k\gr 1$.  It follows that
\begin{align*}
\int_{{\mathbb R}^n} e^{\langle \gamma qy,x\rangle_+}d\mu(x)
&=\sum_{k=0}^{\infty}\frac{1}{k!}\int_{{\mathbb R}^n}\langle \gamma qy, x\rangle_+^kd\mu(x)
\ls \sum_{k\ls q}\frac{(\gamma q)^k}{k!}+\sum_{k>q}\frac{(\gamma q)^k}{k!}\left (\frac{\alpha k}{q}\right )^k\\
&\ls e^{\gamma q}+\sum_{k>q}\frac{1}{2^k}\ls e^{\gamma q}+1\ls e^{\gamma q+1}.\end{align*}
Therefore, for any $y\in W^+_q(\mu)$ we get $\Lambda_{\mu}(\gamma qy)\ls\gamma q+1$.

Now, let $x\notin (1+\delta)Z^+_q(\mu)$. We can find $y\in W^+_q(\mu)$ such that $\langle x,y\rangle> 1+\delta $ and then
\begin{equation*}\Lambda^{\ast }_{\mu }(x) \gr \langle x,\gamma qy\rangle
-\Lambda_{\mu }(\gamma qy)> (1+\delta)\gamma q -\gamma q-1=\delta\gamma q-1 \gr p\end{equation*}
if we assume that $q\gr \frac{2p}{\gamma\delta}$. Therefore, $x\notin
B_p(\mu )$. This shows that $B_p(\mu)\subseteq (1+\delta)Z^+_{c_1\alpha p/\delta} (\mu)$, where $c_1>0$
is an absolute constant. \end{proof}

Combining Proposition~\ref{prop:measure-1} and Proposition~\ref{prop:B<Z} we see that if $\mu$ is
$\alpha $-strongly regular then the bodies $Z_p^+(\mu)$, $T_p(\mu)$
and $B_p(\mu)$ are equivalent up to constants that do not depend on $p$.

\begin{theorem}\label{th:summary-regular}
Let $\mu$ be an $\alpha$-regular Borel probability measure on $\mathbb{R}^n$. If $0$ is a center point for
$\mu$ then for every $p\gr \frac{1}{2\ln (2e\alpha )}\ln (n+1)$ we have that
\begin{equation}\label{eq:summary}Z_p^+(\mu)\subseteq 2T_{2\ln(2e\alpha )p}(\mu).\end{equation}
Moreover, if $\mu$ is $\alpha $-strongly regular then
$T_p(\mu)\subseteq B_p(\mu) \subseteq 2Z_{c_1\alpha p}^+(\mu)$ for every $p\gr 1$, where $c_1>0$ is an
absolute constant.
\end{theorem}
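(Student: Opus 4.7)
The plan is to read the theorem as a clean packaging of the three preceding results in this section: Proposition~\ref{prop:measure-1} (which controls $Z_p^+$ by $T_{g(p)}$), Lemma~\ref{lem:standard} ($T_p\subseteq B_p$), and Proposition~\ref{prop:B<Z} ($B_p$ controlled by $Z_q^+$ for $q$ of order $\alpha p$). The only nontrivial bookkeeping is arranging the hypothesis on $p$ so that the $\max$ in $g(p)=\max\{2\ln(2e\alpha)p,\ln(1/\varphi_\mu(0))\}$ collapses onto the first term.

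For the first inclusion I would start from Proposition~\ref{prop:measure-1}, which gives $Z_p^+(\mu)\subseteq 2T_{g(p)}(\mu)$ with $g(p)$ as above. Since $0$ is a center point for $\mu$, we have $\varphi_\mu(0)=\max(\varphi_\mu)\gr \frac{1}{n+1}$ by Rado's inequality \eqref{eq:T1}, so $\ln(1/\varphi_\mu(0))\ls \ln(n+1)$. Under the standing assumption $p\gr \frac{1}{2\ln(2e\alpha)}\ln(n+1)$, we therefore have $2\ln(2e\alpha)p\gr \ln(n+1)\gr \ln(1/\varphi_\mu(0))$, so $g(p)=2\ln(2e\alpha)p$, and \eqref{eq:summary} follows immediately. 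Note that the assumption $\alpha\gr\tfrac12$ implicit in the regularity definition ensures $2\ln(2e\alpha)>0$, so the threshold is well defined.

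For the second part I would assume $\alpha$-strong regularity and apply Lemma~\ref{lem:standard} directly for the inclusion $T_p(\mu)\subseteq B_p(\mu)$. For the second inclusion I would invoke Proposition~\ref{prop:B<Z} with the specific choice $\delta=1$: the proposition yields $B_p(\mu)\subseteq 2\,Z_{c_1\alpha p}^+(\mu)$ for every $p\gr 1$, which is precisely the claimed inequality (up to a harmless renaming of the absolute constant $c_1$).

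No step looks like a genuine obstacle; if anything were subtle it would be the passage $\max\{2\ln(2e\alpha)p,\ln(1/\varphi_\mu(0))\}=2\ln(2e\alpha)p$, which is why one has to assume a center-point normalization and a floor on $p$ — without these, $Z_p^+(\mu)$ may simply be too large at small $p$ to be captured by $T_{2\ln(2e\alpha)p}(\mu)$, because the depth body is bounded in terms of $\max\varphi_\mu$, while $Z_p^+(\mu)$ has no such a priori constraint. Everything else is a mechanical composition of the lemmas proved above.
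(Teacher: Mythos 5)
Your proposal is correct and follows essentially the same route as the paper: the first inclusion is exactly the paper's argument (Rado's bound $\varphi_\mu(0)\gr\frac{1}{n+1}$ collapses the $\max$ in $g(p)$ of Proposition~\ref{prop:measure-1} under the stated floor on $p$), and the second part is the intended combination of Lemma~\ref{lem:standard} with Proposition~\ref{prop:B<Z} at $\delta=1$. Nothing is missing.
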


\begin{proof}The inclusion of \eqref{eq:summary} follows by the assumption that $0$ is a center point for $\mu$,
and hence $\varphi_{\mu}(0)\gr\frac{1}{n+1}$, which implies that $\ln(1/\varphi_{\mu}(0))\ls\ln (n+1)$.
Therefore, if $p\gr \frac{1}{2\ln (2e\alpha )}\ln (n+1)$ then we have that
$g(p)=2\ln(2e\alpha )p$ in Proposition~\ref{prop:measure-1}.
\end{proof}

Next, we restrict our attention to the class of log-concave probability measures. If $\mu$ is a centered log-concave
probability measure on $\mathbb{R}^n$ then Gr\"{u}nbaum's lemma (see \cite[Lemma~2.2.6]{BGVV-book}) shows that
$\mu(\{z:\langle z,\xi \rangle\gr 0\})\gr 1/e$ for every $\xi\in S^{n-1}$, and hence $\ln(1/\varphi_{\mu}(0))\ls 1$.
We also know that $\mu$ is $C$-regular, by \eqref{eq:regularity}.
Therefore, Proposition~\ref{prop:measure-1} implies the following: For every $p\gr 1$ we have that
\begin{equation}\label{eq:log-concave-1}Z_p^+(\mu)\subseteq 2T_{cp}(\mu)\end{equation}
where $c>0$ is an absolute constant.

We can also use an alternative approach that gives a more precise version of Theorem~\ref{th:summary-regular}.
The next result, which is essentially due to \L ata\l a and Wojtaszczyk (see \cite[Proposition~3.2]{Latala-Wojtaszczyk-2008}),
provides a direct inclusion relation between the bodies $Z_p^+(\mu)$ and $B_p(\mu)$:
If $\mu$ is a centered probability measure on $\mathbb{R}^n$ then, for every $q\gr p\gr p_0$, where
$p_0$ is an absolute constant, we have that
\begin{equation}\label{eq:log-concave-2}Z_p^+(\mu) \subseteq \left(1+\frac{2\ln q}{q}\right)B_q(\mu).\end{equation}
A proof of this particular statement appears in \cite[Proposition~2.5]{Giannopoulos-Tziotziou-2025}. Note that the symmetry assumption
on $\mu$ (which appears in the work of \L ata\l a and Wojtaszczyk) is not required.
If we make the additional assumption that $\mu$ is log-concave, then we can also compare $B_p(\mu)$
with $T_p(\mu)$. The next result appears in \cite[Proposition~2.7]{Giannopoulos-Tziotziou-2025}:
There exists $p_0\gr 1$ such that, for every centered log-concave
probability measure $\mu$ on $\mathbb{R}^n$ and any $p\gr p_0$,
\begin{equation}\label{eq:log-concave-3}T_p(\mu)\subseteq B_p(\mu)\subseteq T_{p+3\ln p}(\mu),\end{equation}
where $p_0\gr 1$ is an absolute constant. The proof of this fact is based on a theorem of Brazitikos and Chasapis
from \cite{Brazitikos-Chasapis-2024}: If $\mu$ is log-concave then, for every $x\in {\rm supp}(\mu)$
and any $\varepsilon\in (0,1)$ we have that
\begin{equation}\label{eq:log-concave-4}\Lambda_{\mu}^{\ast}(x)\gr (1-\varepsilon)\ln\left(\frac{1}{\varphi_{\mu}(x)}\right)
+\ln\left(\frac{\varepsilon}{2^{1-\varepsilon}}\right)
=\ln\left(\frac{\varepsilon}{(2\varphi_{\mu}(x))^{1-\varepsilon}}\right).\end{equation}
Combining these estimates, we get the next proposition.

\begin{proposition}\label{prop:summary-log-concave}Let $\mu$ be a centered log-concave probability measure on $\mathbb{R}^n$.
For every $p\gr p_0$ we have that $Z_p^+(\mu)\subseteq \left(1+\frac{2\ln p}{p}\right)T_{p+3\ln p}(\mu)$
where $p_0>0$ is an absolute constant.
\end{proposition}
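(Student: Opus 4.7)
The plan is to chain together the two comparison results that have just been stated in the excerpt, namely \eqref{eq:log-concave-2} and \eqref{eq:log-concave-3}. Both of these are attributed to Latała--Wojtaszczyk and to Giannopoulos--Tziotziou (building on Brazitikos--Chasapis), and the proposition is essentially the visible synthesis one obtains by feeding the second into the first with matching exponent $q=p$.

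More concretely, first I would invoke \eqref{eq:log-concave-2} with the choice $q=p\gr p_0$, which is permitted because $\mu$ is centered. This yields
\begin{equation*}
Z_p^+(\mu)\subseteq \left(1+\frac{2\ln p}{p}\right)B_p(\mu).
\end{equation*}
Next, since $\mu$ is centered and log-concave and $p\gr p_0$, the right-hand inclusion of \eqref{eq:log-concave-3} applies and gives
\begin{equation*}
B_p(\mu)\subseteq T_{p+3\ln p}(\mu).
\end{equation*}
Inserting the second display into the first (which is permitted because the dilation factor $1+\tfrac{2\ln p}{p}$ is positive and $T_{p+3\ln p}(\mu)$ is star-shaped at the origin, being convex and containing $0$ by Lemma~\ref{lem:T} together with Grünbaum's lemma giving $\varphi_\mu(0)\gr 1/e\gr e^{-(p+3\ln p)}$) yields the claimed inclusion
\begin{equation*}
Z_p^+(\mu)\subseteq \left(1+\frac{2\ln p}{p}\right)T_{p+3\ln p}(\mu).
\end{equation*}

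There is no real obstacle here; the two ingredients have been explicitly quoted from \cite{Giannopoulos-Tziotziou-2025} (based on \cite{Latala-Wojtaszczyk-2008} and \cite{Brazitikos-Chasapis-2024}), and the only thing to verify is that the value of $p_0$ can be chosen large enough so that both \eqref{eq:log-concave-2} and \eqref{eq:log-concave-3} apply simultaneously — this is achieved by taking $p_0$ to be the maximum of the two absolute constants appearing in those statements. The mildly delicate point, worth mentioning but not really an obstacle, is checking that one may apply \eqref{eq:log-concave-2} with the log-concave measure $\mu$ (not requiring symmetry): this is exactly the feature of the statement recorded in the excerpt just before the proposition, so no additional argument is needed.
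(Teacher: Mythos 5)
Your proposal is correct and is exactly the paper's argument: the authors also obtain the proposition by combining \eqref{eq:log-concave-2} with $q=p$ and the right-hand inclusion of \eqref{eq:log-concave-3}, taking $p_0$ large enough for both to apply. Your extra remarks on the positivity of the dilation factor and the non-symmetry of $\mu$ are harmless verifications of points the paper leaves implicit.
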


Analogous results may be obtained for $s$-concave measures. Since the class of $s$-concave
measures on $\mathbb{R}^n$ is decreasing in $s$, we are interested in the case $s<0$ (if $s\gr 0$
then every $s$-concave measure $\mu$ is log-concave and Proposition~\ref{prop:summary-log-concave}
compares $Z_p^+(\mu)$ with $T_p(\mu)$). It is known (see Bobkov~\cite{Bobkov-2007}) that if $\mu$ is $(-1/\kappa)$-concave
for some $\kappa>0$ then the density $f_{\mu}$ of $\mu$ satisfies $f_{\mu}(x)\ls C/(1+|x|^{n+\kappa})$
for all $x\in\mathbb{R}^n$. Bobkov also showed (see \cite[Theorem~5.2]{Bobkov-2010}) that if $\mu$ is centered and $(-1/\kappa)$-concave for some $\kappa>1$, then
\begin{equation}\label{eq:s-concave} \left(1-\frac{1}{\kappa}\right)^{\kappa} \ls \mu(\{x:\langle x,u\rangle \gr 0\}) \ls 1-\left(1-\frac{1}{\kappa}\right)^{\kappa}\end{equation}
for every $u \in S^{n-1}$.
A consequence of \cite[Corollary~8]{Fradelizi-Guedon-Pajor-2014} along with the estimate $\left(x\mathrm{B}(x,y)\right)^{1/x} \approx \frac{x}{x+y}$ for every $x,y \gr 1$, is that if
$\mu$ is centered and $(-1/\kappa)$-concave for some $\kappa >2$ then for all $1\ls p<q\ls \kappa-1$ we have that
\begin{equation}\label{eq:k-Zp-star}Z_p^+(\mu)\subseteq Z_q^+(\mu)\subseteq \frac{C_1q}{p}Z_p^+(\mu)\end{equation}
where $C_1>0$ is an absolute constant. In particular, if $\kappa \gr 3$ then
\begin{equation}\label{eq:k-Zp-star-2} Z_{2p}^+(\mu) \subseteq 2C_1 Z_{p}^+(\mu)
\end{equation}
for every $1 \ls p \ls (\kappa-1)/2$.

Other interesting cases where one can apply the results of Section~\ref{section-3} appear in \cite{GKKMR-2022}.
If $K_N$ is the random polytope generated by a random vector $X=(\xi_1,\ldots ,\xi_n)$ whose
coordinates are independent copies of a $q$-stable random variable $\xi$, where $1\ls q<2$, then
for any $\beta\in (0,1)$ and any $N\gr c_1(\beta,q)n$ we have that
$$K_N\supseteq c_2(q)\left(\frac{N}{n}\right)^{\beta/q}B_{q^{\prime}}^n$$
with probability greater than $1-2\exp(-c_3N^{1-\beta}n^{\beta})$, where $q^{\prime}$ is the conjugate
exponent of $q$. In the case $q=1$, which corresponds to a Cauchy random variable $\xi$, we have that
$$K_N\supseteq c_3\left(\frac{N}{n}\right)^{\beta}B_{\infty}^n$$
with the same probability, where $B_{\infty}^n=[-1,1]^n$ is the unit cube.
These assertions follow by a direct computation of the ``size" of $T_p(\mu)$ where $\mu$ is
the distribution of $X$, which is performed in \cite{GKKMR-2022}. Note that if $q<2$ then $\xi$ is heavy tailed; in particular, it
does not have a finite second moment. This explains the fact that $K_N$ is a much larger set than
a ``log-concave random polytope".

%%%%%%%%%%%%%%%%%%%%%%%%%%%%%%%%%%%%%%%%%%%%%%%%%%%%%%%%%%%%%%%%%%%%%%%%%%%%%%%%%%%%%%%%%%%%%%%%%%%%%%%%%%%%%%%%%%%%%%%
\section{Centroid bodies of absolutely continuous measures}
%%%%%%%%%%%%%%%%%%%%%%%%%%%%%%%%%%%%%%%%%%%%%%%%%%%%%%%%%%%%%%%%%%%%%%%%%%%%%%%%%%%%%%%%%%%%%%%%%%%%%%%%%%%%%%%%%%%%%%%

The starting point of this section is an estimate, essentially due to Lutwak, Yang and Zhang \cite{Lutwak-Yang-Zhang-2000},
about the volume of the $L_p$-centroid
bodies of a centered log-concave probability measure $\mu$ on $\mathbb{R}^n$. One has
\begin{equation*}\vol_n(Z_p(\mu))^{1/n}\gr c_1L_{\mu}^{-1}\sqrt{p/n}\gr c_2\sqrt{p/n}\end{equation*}
for every $1\ls p\ls n$, where $c_1,c_2>0$ are absolute constants. Here, $L_{\mu}$ is the
isotropic constant of $\mu$, and the second inequality is a consequence of the recent
affirmative answer to the hyperplane conjecture.

We discuss similar estimates for the volume of the $L_p$-centroid bodies of $\alpha$-regular measures.
To this end, we make use of the family of star-shaped at the origin sets $\{K_p(\mu)\}_{p>0}$
associated with a probability measure $\mu$, introduced by K.~Ball in \cite{Ball-1988}.
Let $f:\mathbb R^n\to [0,\infty)$ be a measurable function such that $f(0) >0$. For any $p>0$ we define
the set $K_p(f)$ as follows:
\begin{equation*}
K_p(f)=\left\{ x\in \mathbb R^n : \int_0^\infty f(rx)r^{p-1} \, dr\gr \frac{f(0)}{p}\right\}.
\end{equation*}
From the definition it follows that the radial function of $K_p(f)$ is given by
\begin{equation}\label{eq:rho}
\rho_{K_p(f)}(x)=\left (\frac{1}{f(0)}\int_0^{\infty}pr^{p-1}f(rx)\,dr\right )^{1/p}\end{equation}
for $x\neq 0$. If $\mu$ is a probability measure on $\mathbb R^n$ which is absolutely continuous with respect
to the Lebesgue measure, with bounded density $f_{\mu}$ and such that $f_{\mu}(0) > 0$, then we define
\begin{equation*}
K_p(\mu)=K_p(f_\mu).
\end{equation*}
We recall a number of known facts about the bodies $K_p(\mu)$, checking that no other assumption on the
density $f_{\mu}$ is needed. First of all,
\begin{align*}
\vol_n(K_{n}(\mu)) &= \int_{K_{n}(\mu)} \mathbf{1}\, dx= n\omega_n \int_{S^{n-1}}
\int_{0}^{\rho_{K_{n}(\mu)}(\xi)} r^{n-1} dr d\sigma(\xi)\\
&= \frac{n\omega_n}{f_{\mu}(0)} \int_{S^{n-1}}
\int_{0}^{\infty}  r^{n-1} f_{\mu}(r\xi) dr d\sigma (\xi)
= \frac{1}{f_{\mu}(0)} \int_{\mathbb R^n}f_{\mu}(x) dx=\frac{1}{f_{\mu}(0)},
\end{align*}
using \eqref{eq:rho} and integration in spherical coordinates. It is also easily checked,
by direct computation, that for any $\xi\in S^{n-1}$
and any $p>0$ we have \begin{equation}\label{eq:p-moments} \int_{K_{n+p}(\mu)}\langle x,
\xi\rangle_+^p \,dx=\frac{1}{f_{\mu}(0)}\int_{\mathbb R^n}\langle x,
\xi\rangle_+^p\, f_{\mu}(x) \, dx.
\end{equation}
Finally, we need the next inclusion relation between the bodies $K_p(\mu)$.

\begin{lemma}\label{lem:inclusions-between-Kp}
Let $\mu$ be a probability measure on $\mathbb{R}^n$ with bounded density $f_{\mu}$
such that $f_{\mu}(0)>0$. If \, $ 0 < p\ls q$, then
\begin{equation}\label{comparison}K_{p}(\mu)\subseteq
\left(\frac{\|f_{\mu}\|_{\infty}}{f_{\mu}(0)}\right)^{\frac{1}{p}-\frac{1}{q}}K_{q}(\mu).
\end{equation}
\end{lemma}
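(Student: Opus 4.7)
The plan is to work at the level of radial functions. Fix $x\ne 0$, set $L:=\|f_\mu\|_\infty/f_\mu(0)$, $\rho_p:=\rho_{K_p(\mu)}(x)$, $\rho_q:=\rho_{K_q(\mu)}(x)$, and write $h(r):=f_\mu(rx)/f_\mu(0)\in[0,L]$. Since $K_p(\mu)$ and $K_q(\mu)$ are star-shaped at the origin, the inclusion \eqref{comparison} is equivalent to the pointwise bound $\rho_p\ls L^{1/p-1/q}\rho_q$, and via \eqref{eq:rho} this reduces to the moment inequality
$$\int_0^\infty p r^{p-1}h(r)\,dr\;\ls\; L^{1-p/q}\Big(\int_0^\infty q r^{q-1}h(r)\,dr\Big)^{p/q}.$$
The case $p=q$ is trivial and the case $\rho_q=\infty$ is vacuous, so I may assume $0<p<q$ with both integrals finite.

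The main step is a rearrangement-style comparison. I will pick $R:=(\rho_q^q/L)^{1/q}$ and compare $h$ with the extremal profile $h^*:=L\mathbf{1}_{[0,R]}$, which by construction matches the $q$-moment of $h$, namely $\int_0^\infty q r^{q-1}h^*(r)\,dr=LR^q=\rho_q^q$. The key observation is a sign analysis: $h-h^*\ls 0$ on $[0,R]$ (since $h\ls L=h^*$ there) and $h-h^*\gr 0$ on $(R,\infty)$ (since $h^*=0$ there), while $r\mapsto r^{p-q}$ is non-increasing as $p\ls q$, so $(r^{p-q}-R^{p-q})(h-h^*)\ls 0$ pointwise. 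Using the identity $p r^{p-1}=(p/q)\,r^{p-q}\cdot q r^{q-1}$ and integrating then gives
$$\int_0^\infty p r^{p-1}(h-h^*)\,dr\;\ls\;\frac{p}{q}R^{p-q}\int_0^\infty q r^{q-1}(h-h^*)\,dr\;=\;0,$$
where the final equality is precisely the matching of $q$-moments.

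Evaluating the extremal side directly gives $\int_0^\infty p r^{p-1}h^*(r)\,dr=LR^p=L^{1-p/q}\rho_q^p$, so combining with the previous display yields $\rho_p^p\ls L^{1-p/q}\rho_q^p$, which is the desired moment inequality; extracting $p$-th roots finishes the proof. I expect the bulk of the thinking to go into identifying the correct threshold $R$ and verifying the sign analysis; once those are in place, the remaining computation is a one-line substitution. No additional properties of $f_\mu$ beyond boundedness and $f_\mu(0)>0$ enter the argument, consistent with the stated hypotheses of the lemma.
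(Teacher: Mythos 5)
Your proof is correct. The reduction is the same as the paper's: both arguments come down to the one‑dimensional fact that for a function $h$ on $[0,\infty)$ with $0\ls h\ls L$, the normalized moments $\big(\tfrac{p}{L}\int_0^\infty r^{p-1}h(r)\,dr\big)^{1/p}$ are non‑decreasing in $p$, applied to $h(r)=f_\mu(rx)/f_\mu(0)$ along each ray. Where you differ is in how this inequality is established. The paper splits the $q$-th moment integral at an arbitrary threshold $\gamma$, bounds the tail piece below by $\gamma^{q-p}$ times the corresponding $p$-th moment tail, estimates the error term using only $h\ls L$ (in normalized form $\ls 1$), and then optimizes by taking $\gamma=F(p)$, i.e.\ the threshold matched to the \emph{$p$-th} moment. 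You instead compare $h$ with the extremal indicator profile $h^*=L\mathbf{1}_{[0,R]}$ with $R$ matched to the \emph{$q$-th} moment, and conclude via the pointwise sign inequality $(r^{p-q}-R^{p-q})(h-h^*)\ls 0$ — a clean Chebyshev/rearrangement-type argument that makes the extremizer explicit. The two derivations are essentially dual to one another and of comparable length; yours has the small advantage of exhibiting the equality case (indicators of intervals) transparently, while the paper's avoids introducing the auxiliary profile. Your handling of the degenerate cases ($p=q$, $\rho_q=\infty$, and implicitly $\rho_q=0$) is fine, and the boundedness of $f_\mu$ together with $f_\mu(0)>0$ is indeed all that is used.
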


\begin{proof}The proof is based on the next well-known fact:
If $f:[0,\infty) \to [0,\infty)$ is a bounded integrable function, then
\begin{equation*} F(p):= \left( \frac{p}{\|f\|_\infty} \int_0^\infty x^{p-1}f(x) \, dx\right)^{1/p}
\end{equation*} is an increasing function of $p$ on $(0,\infty)$.
Let us briefly recall the proof of this claim: Without loss of generality we may assume
that $\|f\|_\infty=1$. Then, for any $0<p<q$ and $\gamma>0$, we may write
\begin{align*} \frac{F(q)^q}{q}=\int_0^\infty x^{q-1}f(x)\,
dx&=\int_0^{\gamma}x^{q-1}f(x)\, dx +\int_{\gamma}^\infty x^{q-1}f(x)\, dx\\
\nonumber &\gr  \int_0^{\gamma}x^{q-1}f(x)\, dx+\gamma^{q-p}\int_{\gamma}^\infty
x^{p-1}f(x)\,
dx\\
\nonumber &= \gamma^{q-p}\frac{F(p)^p}{p}-\gamma^q\int_0^1(x^{p-1}-x^{q-1})f(\gamma x)\, dx\\
\nonumber &\gr
\gamma^{q-p}\frac{F(p)^p}{p}-\gamma^q\left(\frac{1}{p}-\frac{1}{q}\right).
\end{align*}The choice $\gamma=F(p)$ minimizes the right hand side and shows that
$F(p)\ls F(q)$.

Using this claim with $f=f_{\mu}$ we see that, for any $q\gr p>0$,
\begin{align*}\rho_{K_q(\mu)}(x) &=\left (\frac{q}{f_{\mu}(0)}
\int_0^{\infty} r^{q-1}f_{\mu}(rx)\, dr \right)^{1/q}
=\left (\frac{\| f_{\mu}\|_{\infty }}{f_{\mu}(0)}\right )^{1/q}\left
(\frac{q}{\| f_{\mu}\|_{\infty }} \int_0^{\infty} r^{q-1}f_{\mu}(rx)\, dr
\right)^{1/q}\\
&=\left (\frac{\| f_{\mu}\|_{\infty }}{f_{\mu}(0)}\right )^{1/q}F(q)
\gr \left (\frac{\| f_{\mu}\|_{\infty }}{f_{\mu}(0)}\right )^{1/q}F(p)
=\left (\frac{\| f_{\mu}\|_{\infty }}{f_{\mu}(0)}\right
)^{1/q-1/p}\left (\frac{\| f_{\mu}\|_{\infty }}{f_{\mu}(0)}\right )^{1/p}F(p)\\
&=\left (\frac{\| f_{\mu}\|_{\infty }}{f_{\mu}(0)}\right
)^{1/q-1/p}\rho_{K_p(\mu)}(x)
\end{align*}
and the lemma follows. \end{proof}

Note that since $\vol_n(K_n(\mu))=1/f_{\mu}(0)>0$, then the above lemma provides that $\vol_n(K_q(\mu))>0$ for every $q \gr n$.

The next lemma establishes a close relation between the family of nonsymmetric $L_p$-centroid
bodies of a probability measure $\mu $ and the family of Ball's sets $K_p(\mu )$. Recall
that for every star-shaped at the origin set $A\subset {\mathbb R}^n$ with $\vol_n(A)>0$ we
denote by $\overline{A}$ the set $\vol_n(A)^{-1/n}A$.

\begin{lemma}\label{lem:passing-to-star-bodies}
Let $\mu$ be a probability measure on $\mathbb{R}^n$ with bounded density $f_{\mu}$
such that $f_{\mu}(0)>0$. For every $p\gr 1$,
\begin{equation*}
Z_p^+(\overline{K_{n+p}}(\mu))\vol_n(K_{n+p}(\mu))^{\frac{1}{p}+\frac{1}{n}}
f_{\mu}(0)^{1/p}=Z_p^+(\mu).\end{equation*}
\end{lemma}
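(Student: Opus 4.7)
The plan is to unwind the definition of $Z_p^+$ for the uniform measure on $\overline{K_{n+p}(\mu)}$, reduce the integral to one over $K_{n+p}(\mu)$ by a scaling change of variables, and then recognize the resulting integral via the identity \eqref{eq:p-moments}.

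First I would set $K:=K_{n+p}(\mu)$ and $V:=\vol_n(K)$, so that $\overline{K}=V^{-1/n}K$ has volume $1$. For any $\xi\in S^{n-1}$, by the definition of the nonsymmetric $L_p$-centroid body applied to the uniform probability measure $\mu_{\overline{K}}$,
\begin{equation*}
h_{Z_p^+(\overline{K})}(\xi)^p=\int_{\overline{K}}\langle x,\xi\rangle_+^p\,dx.
\end{equation*}
The substitution $x=V^{-1/n}y$ with $y\in K$ gives $dx=V^{-1}dy$ and $\langle x,\xi\rangle_+^p=V^{-p/n}\langle y,\xi\rangle_+^p$, hence
\begin{equation*}
h_{Z_p^+(\overline{K})}(\xi)^p=V^{-1-p/n}\int_{K}\langle y,\xi\rangle_+^p\,dy.
\end{equation*}

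Next, I would invoke the identity \eqref{eq:p-moments}, which asserts precisely that
\begin{equation*}
\int_{K_{n+p}(\mu)}\langle y,\xi\rangle_+^p\,dy=\frac{1}{f_\mu(0)}\int_{\mathbb{R}^n}\langle y,\xi\rangle_+^p f_\mu(y)\,dy=\frac{1}{f_\mu(0)}\,h_{Z_p^+(\mu)}(\xi)^p.
\end{equation*}
Combining the two displays yields
\begin{equation*}
h_{Z_p^+(\overline{K})}(\xi)^p=V^{-1-p/n}f_\mu(0)^{-1}\,h_{Z_p^+(\mu)}(\xi)^p,
\end{equation*}
and taking $p$-th roots gives $h_{Z_p^+(\mu)}(\xi)=V^{1/p+1/n}f_\mu(0)^{1/p}\,h_{Z_p^+(\overline{K})}(\xi)$ for every $\xi\in S^{n-1}$. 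Since both sets are convex bodies (the right-hand side is a positive dilate of $Z_p^+(\overline{K})$, and $Z_p^+(\mu)$ is well defined because $f_\mu$ is bounded and hence has finite $p$-moments after the dilation), equality of support functions gives the claimed set identity.

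There is no real obstacle here: the argument is essentially bookkeeping. The only points worth a moment's care are (i) verifying that $K_{n+p}(\mu)$ has positive and finite volume so that $\overline{K_{n+p}(\mu)}$ makes sense as a body of volume $1$, which follows from Lemma~\ref{lem:inclusions-between-Kp} together with $\vol_n(K_n(\mu))=1/f_\mu(0)$, and (ii) the correct accounting of the Jacobian $V^{-1}$ and the factor $V^{-p/n}$ from the homogeneity of $\langle\cdot,\xi\rangle_+^p$, which together produce the exponent $1/p+1/n$ after taking $p$-th roots.
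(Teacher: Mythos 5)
Your proof is correct and follows essentially the same route as the paper: both rest on the moment identity \eqref{eq:p-moments} together with the $p$-homogeneous scaling $\int_{K}\langle y,\xi\rangle_+^p\,dy=\vol_n(K)^{1+p/n}\int_{\overline{K}}\langle y,\xi\rangle_+^p\,dy$, and then compare support functions. The bookkeeping of the Jacobian and the exponent $\frac{1}{p}+\frac{1}{n}$ is accurate.
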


\begin{proof}Let $p\gr 1$. From \eqref{eq:p-moments} we know that
\begin{equation*}
\int_{K_{n+p}(\mu)}\langle x, \xi\rangle_+^p
\,dx=\frac{1}{f_{\mu}(0)}\int_{\mathbb R^n}\langle x, \xi \rangle_+^p\, f_{\mu}(x) \, dx
\end{equation*}for all $\xi\in S^{n-1}$. Since
\begin{equation*}
\int_{K_{n+p}(\mu)}\langle x, \xi\rangle_+^p
\,dx=\vol_n(K_{n+p}(\mu))^{1+\frac{p}{n}}\int_{\overline{K_{n+p}}(\mu)}\langle
x, \xi\rangle_+^p \,dx,\end{equation*}the result follows. \end{proof}

The above discussion reduces the question to obtain a lower bound for the
volume of $Z_p^+(\mu)$ to the corresponding question for the volume of $Z_p^+(K)$
where $K$ is a star-shaped at the origin. When $K$ is a star body, the latter
question has been addressed by Lutwak, Yang and Zhang
in \cite{Lutwak-Yang-Zhang-2000} for the $L_p$-centroid bodies $Z_p(K)$, and later in
the form that we need by Haberl and Schuster in \cite{Haberl-Schuster-2009}. If $K$ is
a star body in ${\mathbb R}^n$
then, for every $1\ls p<\infty $, the body $\mathcal{M}_p^+(K)$ is defined
through its support function
\begin{equation*}h_{\mathcal{M}_p^+(K)}(y)=\left (c_{n,p}(n+p)\int_K\langle x,y\rangle_+^p
dx\right )^{1/p},\end{equation*} where
\begin{equation*}c_{n,p}=\frac{\Gamma\left(\frac{n+p}{2}\right)}{\pi^{\frac{n-1}{2}}\Gamma\left(\frac{p+1}{2}\right)}.\end{equation*}
The normalization of $\mathcal{M}_p^+(K)$ is chosen so that
$\mathcal{M}_p^+(B_2^n)=B_2^n$ for every $p$. Haberl and Schuster \cite[Theorem~6.4]{Haberl-Schuster-2009}
proved that if $K$ is a star body in ${\mathbb R}^n$ then, for every $p\gr 1$,
\begin{equation*}\vol_n(K)^{-\frac{n}{p}-1}\vol_n(\mathcal{M}_p^+(K))\gr \vol_n(B_2^n)^{-\frac{n}{p}}\end{equation*}
with equality if and only if $K$ is a centered ellipsoid in $\mathbb{R}^n$.
Since $\mathcal{M}_p^+(K)=(c_{n,p}(n+p))^{1/p}Z_p^+(K)$, we conclude that if $\vol_n(K)=1$
then
$$\vol_n(Z_p^+(K))^{1/n}=(c_{n,p}(n+p))^{-1/p}\vol_n(\mathcal{M}_p^+(K))^{1/n}\gr \left(\frac{1}{c_{n,p}(n+p)\omega_n}\right)^{1/p}.$$
Taking into account the value of the constant $c_{n,p}$ we can formulate
this result in the language that we use: If $K$ is a star body of volume $1$ in $\mathbb{R}^n$
then
\begin{equation}\label{eq:Zp-star}\vol_n(Z_p^+(K))^{1/n}\gr c\sqrt{p/n}\end{equation}
for every $1\ls p\ls n$, where $c>0$ is an absolute constant.

However, in order to implement the above, we need to ensure that Ball's sets $K_q(\mu)$ are star bodies
for our given measure $\mu$. At this point we shall assume that $\mu$ belongs to the class $\mathcal{P}_n$
of Borel probability measures $\mu$ on $\mathbb{R}^n $ with bounded density $f_\mu$ such that
$K_{f_\mu}=\{f_\mu>0\}$ is convex  with $0$ in its interior and the restriction of $f_\mu$ to $K_{f_\mu}$ is continuous.

\begin{proposition}\label{prop:LYZ}
Let $\mu$ be a probability measure on $\mathbb{R}^n$ that belongs to the class $\mathcal{P}_n$. Then,
\begin{equation*}\vol_n(Z_p^+(\mu))^{1/n}\gr c\|f_{\mu}\|_{\infty}^{-1/n}\sqrt{p/n}\end{equation*}
for every $1\ls p\ls n$, where $c>0$ is an absolute constant.
\end{proposition}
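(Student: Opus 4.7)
The plan is to follow the reduction already outlined in the surrounding discussion: pass from $\mu$ to the star body $K_{n+p}(\mu)$ via Lemma~\ref{lem:passing-to-star-bodies}, apply the Haberl--Schuster refinement of the Lutwak--Yang--Zhang affine isoperimetric inequality in the form \eqref{eq:Zp-star} to the normalized star body $\overline{K_{n+p}}(\mu)$, and then estimate the scaling factor $\vol_n(K_{n+p}(\mu))^{1/p+1/n}f_\mu(0)^{1/p}$ from below in terms of $\|f_\mu\|_\infty$ using Lemma~\ref{lem:inclusions-between-Kp} together with the identity $\vol_n(K_n(\mu))=1/f_\mu(0)$.

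First I would verify that the assumption $\mu\in\mathcal{P}_n$ is enough to make $K_{n+p}(\mu)$ a genuine star body. Since $0$ lies in the interior of $K_{f_\mu}$ and $f_\mu$ is continuous and positive on $K_{f_\mu}$, the radial integral $\int_0^{\infty}r^{n+p-1}f_\mu(r\xi)\,dr$ is strictly positive for every $\xi\in S^{n-1}$; the boundedness of $f_\mu$, combined with the convexity and openness of $K_{f_\mu}$, is enough to make $\rho_{K_{n+p}(\mu)}$ continuous on $S^{n-1}$ by dominated convergence. (If this integral happens to be infinite for some direction then $\vol_n(Z_p^+(\mu))=\infty$ and the claim is vacuous.)

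Assuming this, Lemma~\ref{lem:passing-to-star-bodies} yields, after taking $n$-th roots,
\begin{equation*}
\vol_n(Z_p^+(\mu))^{1/n}=\vol_n\bigl(Z_p^+(\overline{K_{n+p}}(\mu))\bigr)^{1/n}\cdot \vol_n(K_{n+p}(\mu))^{1/p+1/n}f_\mu(0)^{1/p}.
\end{equation*}
The body $\overline{K_{n+p}}(\mu)$ is a star body of volume $1$, so \eqref{eq:Zp-star} gives $\vol_n(Z_p^+(\overline{K_{n+p}}(\mu)))^{1/n}\gr c_0\sqrt{p/n}$ for an absolute constant $c_0>0$, in the range $1\ls p\ls n$.

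To handle the remaining factor I would invoke Lemma~\ref{lem:inclusions-between-Kp} with the pair $(n,n+p)$, which produces the inclusion $K_n(\mu)\subseteq (\|f_\mu\|_\infty/f_\mu(0))^{1/n-1/(n+p)}K_{n+p}(\mu)$ and hence
\begin{equation*}
\vol_n(K_{n+p}(\mu))\gr \left(\frac{f_\mu(0)}{\|f_\mu\|_\infty}\right)^{p/(n+p)}\vol_n(K_n(\mu))=\left(\frac{f_\mu(0)}{\|f_\mu\|_\infty}\right)^{p/(n+p)}\frac{1}{f_\mu(0)}.
\end{equation*}
Raising this to the power $1/p+1/n=(n+p)/(np)$ and multiplying by $f_\mu(0)^{1/p}$, the exponent on $f_\mu(0)$ telescopes to $-1/n$, and one reads off
\begin{equation*}
\vol_n(K_{n+p}(\mu))^{1/p+1/n}f_\mu(0)^{1/p}\gr \|f_\mu\|_\infty^{-1/n}.
\end{equation*}
Combining the two lower bounds finishes the proof with an absolute constant.

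The only non-routine step is the verification that $K_{n+p}(\mu)$ is a star body, which is exactly the reason the class $\mathcal{P}_n$ has been introduced; the rest is bookkeeping. The choice $q=n+p$ is forced by the reproducing identity \eqref{eq:p-moments}, and the clean cancellation of $f_\mu(0)$ in the exponent arithmetic is what produces the isotropic-constant-free factor $\|f_\mu\|_\infty^{-1/n}$ on the right-hand side.
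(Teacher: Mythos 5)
Your reduction and the exponent bookkeeping are exactly the paper's: Lemma~\ref{lem:passing-to-star-bodies}, the Haberl--Schuster bound \eqref{eq:Zp-star} applied to $\overline{K_{n+p}}(\mu)$, and Lemma~\ref{lem:inclusions-between-Kp} with the pair $(n,n+p)$ to make the factor $\vol_n(K_{n+p}(\mu))^{1/p+1/n}f_\mu(0)^{1/p}$ collapse to $\|f_\mu\|_\infty^{-1/n}$. That part is correct. The gap is in the step you yourself flag as ``the only non-routine'' one: the verification that $K_{n+p}(\mu)$ is a star body for a general $\mu\in\mathcal{P}_n$. Your appeal to dominated convergence has no integrable dominating function when ${\rm supp}(\mu)$ is unbounded --- the only pointwise bound available is $r^{n+p-1}f_\mu(r\theta)\ls \|f_\mu\|_\infty\, r^{n+p-1}$, which is not integrable on $(0,\infty)$ --- so continuity of $\rho_{K_{n+p}(\mu)}$ on $S^{n-1}$ does not follow. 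Your fallback remark does not close this: one can build $\mu\in\mathcal{P}_n$ with $K_{f_\mu}=\mathbb{R}^n$, all radial integrals $\int_0^\infty r^{n+p-1}f_\mu(r\theta)\,dr$ finite, and $Z_p^+(\mu)$ a genuine convex body of finite volume, while $\theta\mapsto\rho_{K_{n+p}(\mu)}(\theta)$ is discontinuous (e.g.\ by placing continuous bumps of height $1$ and shrinking width at distance $k$ along a sequence of directions $\theta_k\to\theta_\infty$). In that situation \eqref{eq:Zp-star}, which is stated for star bodies, cannot be invoked as you do, and the claim is not vacuous.

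The paper avoids this by a two-step argument you are missing: it first proves the proposition for \emph{compactly supported} $\mu\in\mathcal{P}_n$, where $K_q(\mu)$ genuinely is a star body (here dominated convergence does apply, with dominating function $\|f_\mu\|_\infty R^{n+p-1}\mathds{1}_{[0,R]}$), and then treats general $\mu$ by truncation: it sets $g_k=\tfrac{1}{c_k}f_\mu\cdot\mathds{1}_{kB_2^n}$, applies the compactly supported case to each $\nu_k$, and passes to the limit using $c_k\to 1$ and dominated convergence at the level of the support functions $h_{Z_p^+(\nu_k)}\to h_{Z_p^+(\mu)}$ (which is legitimate because $\langle x,\xi\rangle_+^p$ is $\mu$-integrable by hypothesis), hence $\vol_n(Z_p^+(\nu_k))\to\vol_n(Z_p^+(\mu))$. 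You should insert this approximation step; with it, the rest of your argument goes through verbatim.
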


\begin{proof} Let us assume first that $\mu$ is compactly supported. Then, one can easily check that
$K_q(\mu)$ is a star body for every $q>0$. We know that $\vol_n(K_n(\mu))=1/f_{\mu}(0)$. Let $1\ls p\ls n$.
Lemma~\ref{lem:inclusions-between-Kp} shows that
\begin{equation*}K_n(\mu)\subseteq
\left(\frac{\|f_{\mu}\|_{\infty}}{f_{\mu}(0)}\right)^{\frac{p}{n(n+p)}}K_{n+p}(\mu),
\end{equation*}
and hence
$$\vol_n(K_{n+p}(\mu))^{\frac{1}{p}+\frac{1}{n}}\gr \vol_n(K_n(\mu))^{\frac{1}{p}+\frac{1}{n}}\left(\frac{f_{\mu}(0)}{\|f_{\mu}\|_{\infty}}\right)^{\frac{1}{n}}=
f_{\mu}(0)^{-\frac{1}{p}}\|f_{\mu}\|_{\infty}^{-\frac{1}{n}}.$$
Then, Lemma~\ref{lem:passing-to-star-bodies} shows that
\begin{align*}
\vol_n(Z_p^+(\mu))^{1/n} &=\vol_n(Z_p^+(\overline{K_{n+p}}(\mu)))^{1/n}\vol_n(K_{n+p}(\mu))^{\frac{1}{p}+\frac{1}{n}}
f_{\mu}(0)^{1/p}\\
&\gr \|f_{\mu}\|_{\infty}^{-\frac{1}{n}}\vol_n(Z_p^+(\overline{K_{n+p}}(\mu)))^{1/n}.\end{align*}
Using \eqref{eq:Zp-star} we obtain the result.

Now, in the general case of a measure $\mu\in\mathcal{P}_n$, for every $k \in \mathbb{N}$ we define $\nu_k$ to be the
probability measure with density $g_k=\frac{1}{c_k}f_\mu \cdot \mathds{1}_{kB_{2}^n}$, where  $c_k>0$ is a normalization
constant. Note that each $\nu_k$ is compactly supported and belongs to the class $\mathcal{P}_n$.
Notice that $c_k \to 1$, as $k \to \infty$. Then, by the dominated convergence theorem we have that $h_{Z_{p}^+(\nu_k)}(\xi) \to h_{Z_{p}^+(\mu)}(\xi) $, as $k \to \infty$, for every $\xi \in S^{n-1}$. It follows that
$\vol_n(Z_{p}^+(\nu_k)) \to \vol_n(Z_{p}^+(\mu)) $, as $k \to \infty$. Using the lower bound for
$\vol_n(Z_{p}^+(\nu_k))$, we conclude the proof.
\end{proof}

Based on the inclusion of Theorem~\ref{th:deterministic} we can now deduce a lower bound for the volume of the random polytope $K_N$.

\begin{theorem}\label{th:alpha-volume}Let $\beta\in (0,1)$ and $\alpha\gr\frac{1}{2}$. Set
$r(\alpha,\beta):=\frac{2\ln(2e\alpha)}{\beta}$ and $t(\alpha,\beta):=\frac{\beta}{2\ln(2e\alpha)}$.
If $\mu $ is an $\alpha$-regular probability measure on ${\mathbb R}^n$, which belongs to
the class $\mathcal{P}_n$ and has $0$ as a center point, then
for any $(n+1)^{1+r(\alpha,\beta)}\ls N\ls e^n$ we have that
$$\vol_n(K_N)^{1/n}\gr c \sqrt{t(\alpha,\beta)}\|f_{\mu}\|_{\infty}^{-1/n}\frac{\sqrt{\ln(N/n)}}{\sqrt{n}}$$
with probability greater than $1-\exp(-\tfrac{1}{2}N^{1-\beta}n^{\beta})$, where $c>0$ is an absolute constant.
\end{theorem}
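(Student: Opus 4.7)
The plan is to chain together the three main ingredients already proved in the paper: Theorem~\ref{th:deterministic} (probabilistic inclusion for $K_N$), Theorem~\ref{th:summary-regular} (deterministic inclusion comparing $Z_p^+(\mu)$ with $T_{cp}(\mu)$ under $\alpha$-regularity), and Proposition~\ref{prop:LYZ} (lower bound for the volume of $Z_p^+(\mu)$ when $\mu\in\mathcal{P}_n$). The only real work is a bookkeeping check that the assumption $(n+1)^{1+r(\alpha,\beta)}\ls N\ls e^n$ places the parameter $p:=t(\alpha,\beta)\ln(N/n)$ into the range in which each of these results can be invoked.

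First I would set $p:=t(\alpha,\beta)\ln(N/n)=\tfrac{\beta}{2\ln(2e\alpha)}\ln(N/n)$, so that by construction $2\ln(2e\alpha)\,p=\beta\ln(N/n)$. The hypothesis $N\gr (n+1)^{1+r(\alpha,\beta)}$ gives $\ln(N/n)\gr r(\alpha,\beta)\ln(n+1)=\tfrac{2\ln(2e\alpha)}{\beta}\ln(n+1)$, which rearranges to $p\gr \ln(n+1)$. In particular $p\gr \tfrac{1}{2\ln(2e\alpha)}\ln(n+1)$ and $p\gr 1$, while $N\ls e^n$ together with $t(\alpha,\beta)\ls 1/2$ (since $\alpha\gr 1/2$ forces $\ln(2e\alpha)\gr 1$ and $\beta<1$) yields $p\ls n$. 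Moreover $(n+1)^{1+r(\alpha,\beta)}$ dominates $c(\beta)n$ for all $n$ above an absolute threshold, so the hypothesis $N\gr c(\beta)n$ of Theorem~\ref{th:deterministic} is also satisfied.

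Next I would apply Theorem~\ref{th:summary-regular} with this $p$. Since $0$ is a center point for $\mu$ and $p\gr \tfrac{1}{2\ln(2e\alpha)}\ln(n+1)$, we obtain the deterministic inclusion
\begin{equation*}
Z_p^+(\mu)\subseteq 2T_{2\ln(2e\alpha)p}(\mu)=2T_{\beta\ln(N/n)}(\mu).
\end{equation*}
On the other hand, Theorem~\ref{th:deterministic} yields $K_N\supseteq T_{\beta\ln(N/n)}(\mu)$ with probability at least $1-\exp\!\bigl(-\tfrac{1}{2}N^{1-\beta}n^{\beta}\bigr)$. Combining the two inclusions gives
\begin{equation*}
K_N\supseteq \tfrac{1}{2}Z_p^+(\mu)
\end{equation*}
on the same event.

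Finally, since $\mu\in\mathcal{P}_n$ and $1\ls p\ls n$, Proposition~\ref{prop:LYZ} applies and provides
\begin{equation*}
\vol_n(Z_p^+(\mu))^{1/n}\gr c\,\|f_\mu\|_\infty^{-1/n}\sqrt{p/n}.
\end{equation*}
Substituting $p=t(\alpha,\beta)\ln(N/n)$ and absorbing the factor $1/2$ into the absolute constant yields the desired lower bound for $\vol_n(K_N)^{1/n}$ on the high-probability event. There is no genuine obstacle: the argument is purely a composition of earlier results, and the only delicate point is the verification of the parameter range $p\in[\max\{1,\tfrac{\ln(n+1)}{2\ln(2e\alpha)}\},n]$, which is precisely what the assumption $(n+1)^{1+r(\alpha,\beta)}\ls N\ls e^n$ is tailored to guarantee.
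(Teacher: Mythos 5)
Your proposal is correct and follows essentially the same route as the paper: the paper likewise sets $p=\beta\ln(N/n)$, invokes Theorem~\ref{th:deterministic} for the inclusion $K_N\supseteq T_{\beta\ln(N/n)}(\mu)$, uses Proposition~\ref{prop:measure-1} (the content of Theorem~\ref{th:summary-regular}) together with $\varphi_\mu(0)\gr\frac{1}{n+1}$ to pass to $\frac{1}{2}Z^+_{t(\alpha,\beta)\ln(N/n)}(\mu)$, and checks exactly the same parameter range $1\ls\ln(n+1)\ls t(\alpha,\beta)\ln(N/n)\ls n$ before applying Proposition~\ref{prop:LYZ}. No substantive differences.
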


\begin{proof}From Theorem~\ref{th:deterministic} we know that if $N\gr c_1(\beta)n$ then
the random polytope $K_N$ satisfies
$$K_N\supseteq T_{\beta\ln\left(\frac{N}{n}\right)}(\mu)$$
with probability greater than $1-\exp(-\tfrac{1}{2}N^{1-\beta}n^{\beta})$.
Moreover, since $\mu$ is $\alpha$-regular and $\ln(1/\varphi_{\mu}(0))\ls\ln (n+1)$,
Proposition~\ref{prop:measure-1} shows that for every $p\gr 2\ln(2e\alpha)\ln (n+1)$ we have that
$$Z_{\frac{p}{2\ln(2e\alpha)}}^+(\mu)\subseteq 2T_p(\mu).$$
Define $r(\alpha,\beta)=\frac{2\ln(2e\alpha)}{\beta}$. If $N\gr (n+1)^{1+r(\alpha,\beta)}$
and $p=\beta\ln\left(\frac{N}{n}\right)$ then
$$\frac{1}{2\ln(2e\alpha)}p\gr\frac{\beta}{2\ln(2e\alpha)}r(\alpha,\beta)\ln (n+1)= \ln(n+1),$$ and hence
$$K_N\supseteq \frac{1}{2} Z_{\frac{\beta}{2\ln(2e\alpha)}\ln\left(\frac{N}{n}\right)}^+(\mu)$$
with probability greater than $1-\exp(-\tfrac{1}{2}N^{1-\beta}n^{\beta})$.
Note that if $(n+1)^{1+r(\alpha,\beta)}\ls N\ls e^n$, then
$$1\ls \ln (n+1)\ls\frac{p}{2\ln(2e\alpha)}=\frac{\beta}{2\ln(2e\alpha)}\ln(N/n)
\ls\ln(N/n)\ls n,$$ and hence we may apply Proposition~\ref{prop:LYZ} to get
$$\vol_n(K_N)^{1/n}\gr c \sqrt{t(\alpha,\beta)}\|f_{\mu}\|_{\infty}^{-1/n}\frac{\sqrt{\ln(N/n)}}{\sqrt{n}}$$
with the same probability, where $t(\alpha,\beta)=\frac{\beta}{2\ln(2e\alpha)}$ and $c>0$ is an absolute constant.
\end{proof}

In the case of log-concave or $s$-concave measures with $s \in \left[-\frac{1}{2n+1},0\right)$, Theorem~\ref{th:alpha-volume}
takes the following form.

\begin{proposition}\label{prop:kconc} Let $\beta \in (0,1)$. If $\mu$ is a centered Borel probability measure on $\mathbb{R}^n$ which is either log-concave or $s$-concave, where $s \in \left[-\frac{1}{2n+1},0\right)$, then for any $c_1(\beta)n \ls N \ls e^n$ we have that
$$\vol_n(K_N)^{1/n}\gr c_2 \sqrt{\beta} \|f_{\mu}\|_{\infty}^{-1/n}\frac{\sqrt{\ln(N/n)}}{\sqrt{n}}$$
with probability greater than $1-\exp(-\tfrac{1}{2}N^{1-\beta}n^{\beta})$, where $c_1(\beta)>0$ is a constant depending only on $\beta$ and $c_2>0$ is an absolute constant.
\end{proposition}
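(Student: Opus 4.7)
The plan is to refine the proof of Theorem~\ref{th:alpha-volume} by exploiting the sharper regularity properties available in the log-concave and $s$-concave settings, which replace the factor $r(\alpha,\beta)=2\ln(2e\alpha)/\beta$ by an absolute constant and thereby eliminate the large-$N$ threshold $(n+1)^{1+r(\alpha,\beta)}$. Concretely, the goal is to show that in both cases $\mu$ is $C$-regular with $C$ absolute, $\varphi_{\mu}(0)$ is bounded below by an absolute constant, and $\mu\in\mathcal{P}_n$; once this is in place, the chain of inclusions used to prove Theorem~\ref{th:alpha-volume} goes through with $2\ln(2e\alpha)$ replaced by an absolute constant, yielding the claimed bound for all $N\gr c_1(\beta)n$.

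First I would record the ingredients in each of the two cases. In the log-concave case, the $C$-strong regularity follows from \eqref{eq:regularity}, while Gr\"unbaum's inequality gives $\varphi_{\mu}(0)\gr 1/e$; $\mu\in\mathcal{P}_n$ because its log-concave density is bounded, continuous on the interior of its convex support, and centered so that $0$ lies in that interior. In the $s$-concave case with $s\in[-\tfrac{1}{2n+1},0)$, writing $\kappa=-1/s>2n+1$, the estimate \eqref{eq:s-concave} yields $\mu(\{\langle x,u\rangle\gr 0\})\gr (1-1/\kappa)^{\kappa}\gr c_0$ for every $u\in S^{n-1}$, hence $\varphi_{\mu}(0)\gr c_0$. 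The inclusion \eqref{eq:k-Zp-star-2} provides $C_1$-regularity in the range $1\ls p\ls(\kappa-1)/2$, and since $(\kappa-1)/2>n$ this covers every value of $p$ we will use. Bobkov's estimate $f_{\mu}(x)\ls C/(1+|x|^{n+\kappa})$ gives boundedness, and Borell's characterization makes the density $-1/\alpha$-concave (with $\alpha=n-1/s$), hence continuous on the interior of its convex support, so again $\mu\in\mathcal{P}_n$.

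Next I would plug these absolute constants into Proposition~\ref{prop:measure-1}: with $\alpha$ absolute and $\ln(1/\varphi_{\mu}(0))$ absolute, one has $g(p)\ls c p$ for every $p\gr 1$, so that
$$Z_{p}^{+}(\mu)\subseteq 2T_{cp}(\mu),\qquad p\gr 1,$$
with $c>0$ absolute (valid for $p\ls n$ in the $s$-concave case, which is all we need). Combining this with Theorem~\ref{th:deterministic}, applied at level $\beta\ln(N/n)$, gives for $N\gr c_1(\beta)n$, with $c_1(\beta)$ large enough so that $\beta\ln(N/n)/c\gr 1$ and so that the probability estimate of Theorem~\ref{th:deterministic} holds,
$$K_N\supseteq T_{\beta\ln(N/n)}(\mu)\supseteq\tfrac{1}{2}Z_{\beta\ln(N/n)/c}^{+}(\mu)$$
with probability at least $1-\exp(-\tfrac{1}{2}N^{1-\beta}n^{\beta})$. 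Finally, since $1\ls\beta\ln(N/n)/c\ls n$ throughout the range $c_1(\beta)n\ls N\ls e^n$, Proposition~\ref{prop:LYZ} is applicable to $Z_{\beta\ln(N/n)/c}^{+}(\mu)$ and delivers the desired lower bound on $\vol_n(K_N)^{1/n}$ with the constant $\sqrt{\beta}$ absorbed from $\sqrt{\beta\ln(N/n)/(cn)}$.

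The only point that is slightly delicate is the $s$-concave step: one must confirm that both the regularity constant and the lower bound on $\varphi_{\mu}(0)$ are genuinely absolute (independent of $n$) throughout the range $\kappa>2n+1$, and that the regularity range $p\ls(\kappa-1)/2$ indeed covers every $p$ arising in the argument. No new idea is needed beyond the facts gathered in Section~\ref{section-4}; this is a bookkeeping refinement of Theorem~\ref{th:alpha-volume} in which the dependence on $\alpha$ disappears.
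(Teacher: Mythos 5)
Your proposal is correct and follows essentially the same route as the paper: bound $\varphi_{\mu}(0)$ below by an absolute constant via Gr\"unbaum's lemma resp.\ \eqref{eq:s-concave}, use \eqref{eq:regularity} resp.\ \eqref{eq:k-Zp-star-2} to get regularity with absolute constants so that Proposition~\ref{prop:measure-1} yields $Z_p^+(\mu)\subseteq 2T_{cp}(\mu)$ for all $1\ls p\ls n$, and then run the argument of Theorem~\ref{th:alpha-volume}. The only cosmetic difference is that the paper collapses both cases into the single case of a $(-\tfrac{1}{2n+1})$-concave measure, while you treat them in parallel; the substance is identical.
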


\begin{proof}
It suffices to assume that $\mu$ is $(-\frac{1}{2n+1})$-concave. At first, it is clear that $\mu$ belongs to the class $\mathcal{P}_n$. Then, by \eqref{eq:s-concave} we conlcude that
$$\varphi_\mu(0) \gr \left(1-\frac{1}{2n+1}\right)^{2n+1} \gr \frac{1}{5}.$$
Now, taking into account \eqref{eq:k-Zp-star-2}, an inspection of the proof of Proposition~\ref{prop:measure-1} shows that
$$Z_{p}^+(\mu) \subseteq 2T_{cp}(\mu)$$
where $c>1$ is an absolute constant, for every $1 \ls p \ls n$, since $2\ln(2eC_1p) > \ln(5) \gr \ln(1/\varphi_\mu (0))$.
So, if we set $p=\beta\ln(\frac{N}{n})$, then
$$1 \ls \frac{p}{c} \ls n$$
for any $c_1(\beta)n\ls N \ls e^n$, if $c_1(\beta)$ is chosen large enough. Then, we follow the proof of Theorem~\ref{th:alpha-volume}.
\end{proof}

Assume now that $\mu$ is a centered log-concave probability measure on $\mathbb{R}^n$. It was proved in \cite{DGT1}
that for every $n\ls N\ls e^n$ one has $\vol_n(S_N)^{1/n}\ls c\sqrt{\ln N}/\sqrt{n}$ with probability
greater than $1/N$. We present a variant of the argument which shows a similar upper bound for the
expectation of the volume of $S_N$.

\begin{theorem}\label{th:new-volume}Let $\mu $ be a centered log-concave probability measure
on $\mathbb{R}^n$. Then, for every $n^2\ls N\ls e^n$, one has
\begin{equation}\label{eq:main-new-volume}
c_1\frac{\sqrt{\ln N}}{\sqrt{n}}\ls \mathbb{E}\big(\vol_n(K_N)^{1/n}\big)\ls \mathbb{E}\big(\vol_n(S_N)^{1/n}\big)
\ls c_2\frac{\sqrt{\ln N}}{\sqrt{n}}\end{equation}
where $c_1,c_2>0$ are absolute constants.
\end{theorem}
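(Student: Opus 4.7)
The plan is to prove the middle inequality trivially via $K_N\subseteq S_N$, reduce by affine invariance to the isotropic case, and then handle the two extreme inequalities separately. Under the isotropic normalization both $\vol_n(K_N)^{1/n}$ and $\vol_n(S_N)^{1/n}$ scale by the same factor $(\det\mathrm{Cov}(\mu))^{1/(2n)}$, so I may and will assume that $\mu$ is isotropic. In this case $\|f_\mu\|_{\infty}^{1/n}=L_\mu\ls C$ by the Klartag--Lehec theorem, so the desired two-sided bounds take the form $\sqrt{\ln N}/\sqrt n$ with absolute constants.

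For the lower bound I would invoke Proposition~\ref{prop:kconc} with, say, $\beta=1/2$. The hypothesis $N\gr c_1(\beta)n$ is implied by $n^2\ls N$ once $n$ is large enough, so Proposition~\ref{prop:kconc} gives
\[\vol_n(K_N)^{1/n}\gr c\,\|f_\mu\|_\infty^{-1/n}\,\frac{\sqrt{\ln(N/n)}}{\sqrt n}\]
with probability at least $1-\exp(-(Nn)^{1/2}/2)$. Combining $L_\mu^{-1}\gr c$ with $\ln(N/n)\gr (\ln N)/2$ (which follows from $N\gr n^2$), the event $\{\vol_n(K_N)^{1/n}\gr c'\sqrt{\ln N}/\sqrt n\}$ holds with probability $\gr 1-\exp(-n^{3/2}/2)$, and taking expectation yields the leftmost inequality.

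For the upper bound I would extend the DGT1 argument. Fixing $u\in S^{n-1}$ and any $p\gr 1$, Jensen's inequality together with $h_{S_N}(u)^p=\max_{i\ls N}|\langle X_i,u\rangle|^p\ls\sum_i|\langle X_i,u\rangle|^p$ gives
\[\E[h_{S_N}(u)]\ls\big(\E[h_{S_N}(u)^p]\big)^{1/p}\ls N^{1/p}\,h_{Z_p(\mu)}(u),\]
and choosing $p=\ln N$ reduces this to $\E[h_{S_N}(u)]\ls e\,h_{Z_{\ln N}(\mu)}(u)$. Urysohn's inequality $\vol_n(K)^{1/n}\ls\omega_n^{1/n} w(K)/2\ls c\,w(K)/\sqrt n$ applied to the symmetric body $S_N$, combined with Fubini, then produces
\[\E\big[\vol_n(S_N)^{1/n}\big]\ls\frac{2ce}{\sqrt n}\,w\big(Z_{\ln N}(\mu)\big).\]

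The hard part will be the remaining mean-width estimate $w(Z_p(\mu))\ls C\sqrt p$ for isotropic log-concave $\mu$ in the range $1\ls p\ls n$, which is where log-concavity enters in an essential way. The Fubini identity $\int_{S^{n-1}}h_{Z_p(\mu)}(u)^p\,d\sigma(u)=\E\int_{S^{n-1}}|\langle X,u\rangle|^p\,d\sigma(u)$ combined with the standard spherical estimate $\int_{S^{n-1}}|\langle x,u\rangle|^p\,d\sigma(u)\ls (cp/n)^{p/2}|x|^p$ reduces the task to controlling the moments of $|X|$, which is handled by Paouris' theorem in the Gaussian range $p\ls c\sqrt n$ and by its refinements (regularity of $L_q$-centroid bodies and finer tail estimates) in the heavier range $c\sqrt n\ls p\ls n$. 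Jensen then gives $w(Z_p(\mu))\ls 2(\int h_{Z_p}^p\,d\sigma)^{1/p}\ls C\sqrt p$, and plugging $p=\ln N\ls n$ completes the proof.
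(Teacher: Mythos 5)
Your middle inequality and your lower bound are fine and coincide with the paper's treatment: the paper also reduces to the isotropic case, applies Proposition~\ref{prop:kconc} with $\beta=1/2$, uses $\|f_\mu\|_\infty^{1/n}=L_\mu\approx 1$ and $\ln(N/n)\gr\frac12\ln N$, and integrates over the high-probability event. The problem is the upper bound, where you take a genuinely different route (Urysohn plus a mean-width estimate for $Z_p(\mu)$) and this route has a real gap. Your chain $\E\big[\vol_n(S_N)^{1/n}\big]\ls \frac{c}{\sqrt n}\,w(Z_{\ln N}(\mu))$ is correct, but it leaves you needing $w(Z_p(\mu))\ls C\sqrt p$ for isotropic log-concave $\mu$ in the whole range $1\ls p\ls n$. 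The Jensen reduction you propose gives $w(Z_p(\mu))\ls \big(\int_{S^{n-1}}h_{Z_p(\mu)}^p\,d\sigma\big)^{1/p}\approx \sqrt{p/n}\,I_p(\mu)$, and Paouris' theorem controls $I_p(\mu)\ls C I_2(\mu)=C\sqrt n$ only for $p\ls c\sqrt n$. For $\sqrt n\lesssim p\ls n$ one can have $I_p(\mu)\approx p$ (e.g.\ a product of exponentials), so this bound degenerates to $\sqrt p+p^{3/2}/\sqrt n$, which at $p=n$ is of order $n$ rather than $\sqrt n$. Nor can you fall back on a direct mean-width estimate: for $p$ comparable to $n$ the inequality $w(Z_p(\mu))\ls C\sqrt p$ is essentially the mean-width problem for isotropic convex bodies, for which the best known bounds (E.~Milman) carry extra polylogarithmic factors even after the resolution of the slicing problem. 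Since the theorem is claimed for all $N\ls e^n$, i.e.\ $p=\ln N$ up to $n$, your argument does not close in the range $N\gr e^{c\sqrt n}$, and even importing the best available mean-width estimates would yield a bound weaker than $\sqrt{\ln N}/\sqrt n$ by iterated-logarithm factors.

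The paper avoids this obstruction by working with negative moments instead of the mean width: via H\"older and the Blaschke--Santal\'o inequality one has $\vol_n(S_N)^{1/n}\ls \frac{c}{\sqrt n}\,w_{-p}(S_N)$, then a Cauchy--Schwarz decoupling together with Lemma~\ref{lem:comp} (the Markov bound $\P(h_{S_N}(\xi)\gr\delta h_{Z_p(\mu)}(\xi))\ls N\delta^{-p}$) reduces everything to $w_{-p/4}(Z_p(\mu))$, and Paouris' identity $w_{-q}(Z_q(\mu))\approx\sqrt{q/n}\,I_{-q}(\mu)$ combined with the trivial monotonicity $I_{-q}(\mu)\ls I_2(\mu)=\sqrt n$ gives the clean $\sqrt p$ bound for \emph{all} $p\ls n$. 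That substitution of $I_{-p}$ for $I_p$ is exactly the point of the negative-moment argument, and it is what your proposal is missing. (A shared cosmetic issue, not specific to you: the statement is not affine invariant, so the reduction to the isotropic case implicitly normalizes $\det\mathrm{Cov}(\mu)=1$; the paper does the same.)
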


The starting point is the next general lemma.

\begin{lemma}\label{lem:comp}Let $\mu $ be a probability measure on ${\mathbb R}^n$
such that $Z_p(\mu)$ is a convex body for some $p\gr 1$. Then, for every $\delta>1$ one has
\begin{equation*}{\mathbb E}\,\Big(\sigma (\{\xi : h_{S_N}(\xi )\gr \delta h_{Z_p(\mu )}(\xi
)\})\Big) \ls N\delta^{-p}. \end{equation*}
\end{lemma}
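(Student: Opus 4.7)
The plan is to combine Fubini's theorem with a union bound and Markov's inequality, which is the standard way to control the support function of $S_N$ by the $L_p$-moments of linear functionals.

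First, I would write
\begin{equation*}
\mathbb{E}\,\Big(\sigma(\{\xi : h_{S_N}(\xi) \gr \delta h_{Z_p(\mu)}(\xi)\})\Big)
= \int_{S^{n-1}} \mathbb{P}\big( h_{S_N}(\xi) \gr \delta h_{Z_p(\mu)}(\xi) \big)\, d\sigma(\xi),
\end{equation*}
by Fubini's theorem (the event inside is jointly measurable in $\xi$ and the $X_i$'s).

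Next, for each fixed $\xi \in S^{n-1}$, observe that $h_{S_N}(\xi) = \max_{1 \ls i \ls N} |\langle X_i, \xi\rangle|$, so by the union bound
\begin{equation*}
\mathbb{P}\big( h_{S_N}(\xi) \gr \delta h_{Z_p(\mu)}(\xi) \big)
\ls \sum_{i=1}^N \mathbb{P}\big( |\langle X_i,\xi\rangle| \gr \delta h_{Z_p(\mu)}(\xi) \big)
= N\,\mathbb{P}\big( |\langle X_1,\xi\rangle| \gr \delta h_{Z_p(\mu)}(\xi) \big),
\end{equation*}
where the last equality uses the fact that the $X_i$'s are identically distributed according to $\mu$.

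Then I would apply Markov's inequality to the random variable $|\langle X_1,\xi\rangle|^p$ to get
\begin{equation*}
\mathbb{P}\big( |\langle X_1,\xi\rangle| \gr \delta h_{Z_p(\mu)}(\xi) \big)
\ls \frac{\mathbb{E}_{\mu}\big(|\langle \cdot ,\xi\rangle|^p\big)}{\delta^p\, h_{Z_p(\mu)}(\xi)^p}
= \frac{h_{Z_p(\mu)}(\xi)^p}{\delta^p\, h_{Z_p(\mu)}(\xi)^p} = \delta^{-p},
\end{equation*}
by the very definition of $h_{Z_p(\mu)}(\xi)$. Note that since $Z_p(\mu)$ is assumed to be a convex body, $h_{Z_p(\mu)}(\xi)>0$ for every $\xi \in S^{n-1}$, so dividing by it is legitimate. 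Integrating back over $S^{n-1}$ with respect to $\sigma$ yields the claimed bound $N\delta^{-p}$.

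There is no real obstacle here; the only thing worth double-checking is that $h_{Z_p(\mu)}(\xi)$ is strictly positive on $S^{n-1}$ (used when dividing), which follows from the hypothesis that $Z_p(\mu)$ is a convex body, and that Fubini applies, which is immediate since the indicator of the event is a bounded measurable function of $(\xi, X_1,\ldots,X_N)$.
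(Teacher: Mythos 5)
Your proof is correct and follows exactly the same route as the paper's: Fubini's theorem to reduce to a pointwise probability on the sphere, a union bound over the $N$ points, and Markov's inequality applied to $|\langle X,\xi\rangle|^p$ together with the definition of $h_{Z_p(\mu)}$. The remark that $h_{Z_p(\mu)}(\xi)>0$ on $S^{n-1}$ is a small extra care the paper leaves implicit.
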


\begin{proof}Let $X$ be a random vector distributed according to $\mu$.
For any $\xi\in S^{n-1}$, Markov's inequality shows that ${\mathbb P}\,(|\langle X,\xi\rangle |\gr\delta
\|\langle \cdot ,\xi\rangle\|_p)\ls \delta^{-p}$. Then,
\begin{align*}{\mathbb P}\,(h_{S_N}(\xi )\gr \delta h_{Z_p(\mu )}(\xi
)) &={\mathbb P}\,(\max_{1\ls j\ls N}|\langle X_j,\xi \rangle
|\gr\delta\|\langle \cdot ,\xi\rangle\|_p)\\
\nonumber &\ls N\,{\mathbb P}\,(|\langle X,\xi\rangle |\gr\delta
\|\langle \cdot ,\xi\rangle\|_p)\ls N\delta^{-p}.\end{align*}
Then,
\begin{equation*}{\mathbb E}\,\Big(\sigma (\{\xi : h_{K_N}(\xi )\gr \delta h_{Z_p(\mu )}(\xi
)\})\Big)=\int_{S^{n-1}}{\mathbb P}\,(h_{K_N}(\xi )\gr \delta
h_{Z_p(\mu )}(\xi ))\,d\sigma (\xi )\ls N\delta^{-p}\end{equation*} by
Fubini's theorem. \end{proof}

Now, we make the additional assumption that $\mu$ is centered and log-concave. In what follows,
for every symmetric convex body $K$ in $\mathbb{R}^n$ and for any $q\neq 0$ we define
\begin{equation*}w_q(K)=\left (\int_{S^{n-1}}h_K(\xi)^qd\sigma (\xi)\right )^{1/q}.
\end{equation*} Note that $w_1(K)=w(K)$ is the mean width of $K$. The parameters $w_q(K)$, $q\gr 1$
were introduced and studied by Litvak, Milman and Schechtman in \cite{Litvak-VMilman-Schechtman-1998}.

\begin{proof}[Proof of Theorem~$\ref{th:new-volume}$]We may assume that $\mu$ is
isotropic. Set $p=\ln N \ls n$. We start with the observation that
\begin{equation}\label{eq:new-volume-1}\vol_n(K_N)^{1/n}\ls \vol_n(S_N)^{1/n}\ls \frac{c_1}{\sqrt{n}}
w_{-p}(S_N)\end{equation} for some absolute constant $c_1>0$. Indeed, using H\"{o}lder's inequality we write
$$\vrad(S_N^{\circ }) =\left (\int_{S^{n-1}}\frac{1}{h_{S_N}^n(\xi )}\,d\sigma
(\xi )\right )^{1/n}\gr \left
(\int_{S^{n-1}}\frac{1}{h_{S_N}^p(\xi )}\,d\sigma (\xi )\right
)^{1/p}=\frac{1}{w_{-p}(S_N)}.$$ Then, the
Blaschke-Santal\'{o} inequality (see~\cite[Theorem~1.5.10]{AGA-book}) implies that
\begin{equation*}\vol_n(S_N)^{1/n}\approx \frac{1}{\sqrt{n}}\,\vrad(S_N)\ls \frac{1}{\sqrt{n}}\,\vrad(S_N^{\circ })^{-1}\ls
\frac{c_1w_{-p}(S_N)}{\sqrt{n}}.\end{equation*}
Next, we write
\begin{align*}
w_{-p/4}(Z_p(\mu ))^{-p/2} &= \left
(\int_{S^{n-1}}\frac{1}{h^{p/4}_{Z_p(\mu )}(\xi )}\,d\sigma
(\xi )\right )^2\\
&\ls  \left (\int_{S^{n-1}}\frac{1}{h^{p/2}_{S_N}(\xi )}\,d\sigma
(\xi )\right )\left (\int_{S^{n-1}}\frac{h^{p/2}_{S_N}(\xi
)}{h^{p/2}_{Z_p(\mu )}(\xi )}\,d\sigma (\xi )\right ),
\end{align*}
which can be rewritten as
\begin{equation}\label{eq:new-volume-2}w_{-p/2}(S_N)=\left (\int_{S^{n-1}}\frac{1}{h^{p/2}_{S_N}(\xi )}\,d\sigma
(\xi )\right )^{-2/p}\ls w_{-p/4}(Z_p(\mu )) \left (\int_{S^{n-1}}\frac{h^{p/2}_{S_N}(\xi
)}{h^{p/2}_{Z_p(\mu )}(\xi )}\,d\sigma (\xi )\right )^{2/p}.\end{equation}
Now, we estimate the integral
\begin{equation}\label{eq:new-volume-3}\int_{S^{n-1}}\frac{h^{p/2}_{S_N}(\xi
)}{h^{p/2}_{Z_p(\mu )}(\xi )}\,d\sigma (\xi
)=\int_0^{\infty }\frac{p}{2}t^{\frac{p}{2}-1}\big(\sigma \big (\xi :\,h_{S_N}(\xi
)\gr th_{Z_p(\mu )}(\xi )\big )\big)\,dt. \end{equation}
Taking expectations in \ref{eq:new-volume-3} and using Lemma~\ref{lem:comp}, we see that
$${\mathbb E}\,\left(\int_{S^{n-1}}\frac{h^{p/2}_{S_N}(\xi)}{h^{p/2}_{Z_p(K)}(\xi )}\,d\sigma (\xi )\right)
\ls e^{p/2}+\int_e^{\infty }\frac{p}{2}t^{\frac{p}{2}-1}Nt^{-p}\,dt= e^{p/2}+ Ne^{-p/2}=2e^{p/2}.$$
Going back to \eqref{eq:new-volume-2} we get
\begin{align}\label{eq:new-volume-4}\mathbb{E}(w_{-p/2}(S_N)) &\ls
w_{-p/4}(Z_p(\mu ))\,\mathbb{E}\Bigg(\Bigg (\int_{S^{n-1}}\frac{h^{p/2}_{S_N}(\xi
)}{h^{p/2}_{Z_p(\mu )}(\xi )}\,d\sigma (\xi )\Bigg )^{2/p}\Bigg)\\
\nonumber &\ls w_{-p/4}(Z_p(\mu ))\Bigg(\mathbb{E}\Bigg (\int_{S^{n-1}}\frac{h^{p/2}_{S_N}(\xi
)}{h^{p/2}_{Z_p(\mu )}(\xi )}\,d\sigma (\xi )\Bigg)\Bigg)^{2/p} \ls c_2w_{-p/4}(Z_p(\mu ))
\end{align}
where $c_2>0$ is an absolute constant. Next, recall that if $\mu $ is a log-concave probability measure on
${\mathbb R}^n$ then, for any $1\ls q\ls n-1$,
\begin{equation*}w_{-q}(Z_q(\mu ))\approx\frac{\sqrt{q}}{\sqrt{n}}\,I_{-q}(\mu )
\end{equation*} where $I_q(\mu )=\left (\int_{{\mathbb R}^n}|x|^q\,d\mu (x)\right )^{1/q}$ for $0\neq q>-n$.
This is a result of Paouris from~\cite{Paouris-2012}; see also \cite[Theorem~5.3.16]{BGVV-book}. Note that $1\ls p/4\ls n-1$ because we have assumed that $n^2\ls N\ls e^{n}$. Since $Z_p(\mu )\subseteq
c_3Z_{p/4}(\mu )$ for an absolute constant $c_3>0$, we can write
\begin{equation*}w_{-p/4}(Z_p(\mu ))\ls c_4w_{-p/4}(Z_{p/4}(\mu ))\ls
\frac{c_5\sqrt{p}}{\sqrt{n}}\,I_{-p/4}(\mu ).\end{equation*}
Since we have assumed that
$\mu $ is isotropic, we have $I_{-p/4}(\mu )\ls I_2(\mu )=\sqrt{n}$, and it follows that
\begin{equation*}w_{-p/4}(Z_p(\mu ))\ls c_6\sqrt{p}.\end{equation*}
Combining the last inequality with \eqref{eq:new-volume-1} and \eqref{eq:new-volume-4}  we have
\begin{equation*}\mathbb{E}\big(\vol_n(K_N)^{1/n}\big)\ls \mathbb{E}\big(\vol_n(S_N)^{1/n}\big)\ls \frac{c_7\sqrt{p}}{\sqrt{n}}=
c_7\frac{\sqrt{\ln N}}{\sqrt{n}}.\end{equation*}
The lower bound in \eqref{eq:main-new-volume} is an immediate consequence of Proposition~\ref{prop:kconc} for $\beta=1/2$ and Markov's inequality, taking also into account that $\|f_\mu\|^{1/n} \approx 1$, as $\mu$ is isotropic, and $\ln(\frac{N}{n}) \gr \frac{1}{2}\ln N$, since $n^2 \ls N \ls e^n$.
\end{proof}

%%%%%%%%%%% End of paper body %%%%%%%%%%%%%%%%%%%%%%%%%%%%%%%
\bigskip

\noindent {\bf Acknowledgement.} We would like to thank S.~Brazitikos and A.~Giannopoulos
for helpful discussions. The second named author acknowledges support by a PhD scholarship
from the National Technical University of Athens.

\bigskip

%%%%%%%%%%%%%%%%%%%%%%%%%%%%%%%%%%%%%%%%%%%%%%%%%%%%%%%%%%%%%%%%%%%%%%%%%
%%%%%%%%%%%%%%%%%%%%%%%%%%%%%%%%%%%%%%%%%%%%%%%%%%%%%%%%%%%%%%%%%%%%%%%%%

\footnotesize
\bibliographystyle{amsplain}

\small

\bigskip

\medskip

\thanks{\noindent {\bf Keywords:} Random polytopes, Tukey half-space depth, floating bodies, heavy tails, log-concave measures.

\smallskip

\thanks{\noindent {\bf 2020 MSC:} Primary 60D05; Secondary 52A22, 52A23, 60E15, 62H05.}

\bigskip

\bigskip

\noindent \textsc{Minas \ Pafis}: Department of Mathematics, National and Kapodistrian University of Athens, Panepistimioupolis 157-84,
Athens, Greece.

\smallskip

\noindent \textit{E-mail:} \texttt{mipafis@math.uoa.gr}

\bigskip

\noindent \textsc{Natalia \ Tziotziou}: School of Applied Mathematical and Physical Sciences, National Technical University of Athens, Department of Mathematics, Zografou Campus, GR-157 80, Athens, Greece.

\smallskip

\noindent \textit{E-mail:} \texttt{nataliatz99@gmail.com}

\end{document}